\documentclass[11pt]{amsart}
\usepackage{placeins}
\usepackage{amscd}
\usepackage{amsmath, amssymb, comment}
\usepackage{amsfonts}
\usepackage{enumerate}
\usepackage{color}
\usepackage{graphicx}
\usepackage{float}
\usepackage[font=small,labelfont=bf]{caption}
\usepackage{url}

\newcommand{\ov}[1]{\overline{#1}}

\newcommand{\ti}[1]{\widetilde{#1}}
\newcommand{\vp}{\varphi}

\newcommand{\e}{\varepsilon}

\newcommand{\p}{\partial}
\newcommand{\pbar}{\overline{\partial}}
\renewcommand{\e}{\varepsilon}

\newcommand{\abs}[1]{\left\lvert#1\right\rvert}

\renewcommand{\leq}{\leqslant}
\renewcommand{\geq}{\geqslant}

\newcommand{\be}{\begin{equation}}
\newcommand{\ee}{\end{equation}}

\newcommand{\R}{\mathbb{R}}
\newcommand{\C}{\mathbb{C}}
\newcommand{\MA}{\mathrm{MA}}

\begin{document}
\newcounter{remark}
\newcounter{theor}
\setcounter{theor}{1}
\newtheorem{claim}{Claim}
\newtheorem{theorem}{Theorem}[section]
\newtheorem{lemma}[theorem]{Lemma}
\newtheorem{corollary}[theorem]{Corollary}
\newtheorem{proposition}[theorem]{Proposition}
\newtheorem{question}{Question}[section]
\newtheorem{goal}{Goal}[section]
\newtheorem{definition}[theorem]{Definition}
\newtheorem{remark}[theorem]{Remark}

\numberwithin{equation}{section}


\setlength{\floatsep}{6pt plus 2pt minus 2pt}
\setlength{\textfloatsep}{6pt plus 2pt minus 2pt}
\setlength{\intextsep}{6pt plus 2pt minus 2pt}
\captionsetup[figure]{skip=3pt}

\renewcommand{\topfraction}{0.9}
\renewcommand{\bottomfraction}{0.9}
\renewcommand{\textfraction}{0.07}
\renewcommand{\floatpagefraction}{0.7}
\setcounter{topnumber}{4}
\setcounter{bottomnumber}{4}
\setcounter{totalnumber}{8}

\makeatletter
\setlength{\@fptop}{0pt}
\setlength{\@fpsep}{10pt plus 2pt minus 2pt}
\setlength{\@fpbot}{0pt plus 1fil}
\makeatother

\title[Asymptotics]{Asymptotics for the $k$-Hessian Eigenvalue on the Unit Ball}

\author[N. McCleerey]{Nicholas McCleerey}
\address{Department of Mathematics,
Purdue University, West Lafayette
150 N University St
West Lafayette, IN 47907}
\email{nmccleer@purdue.edu}

\author[A. Pande]{Abhinav Pande}
\address{Department of Mathematics,
Purdue University, West Lafayette
150 N University St
West Lafayette, IN 47907}
\email{pande1@purdue.edu}

\author[T. Wanasinghe]{Thidas Wanasinghe}
\address{Department of Mathematics,
Purdue University, West Lafayette
150 N University St
West Lafayette, IN 47907}
\email{cwanasin@purdue.edu}

\begin{abstract}
We compute the limit of the eigenvalue of the $k$-Hessian operator on the unit ball in $\R^n$ when $k\rightarrow \infty$, assuming that the ratio $\frac{n}{k}$ remains fixed. When $n< 2ke$, we moreover identify the limit of the corresponding eigenfunctions. We also derive monotonicity results and consider when the ratio varies in $n$. 
\end{abstract}

\subjclass[2020]{Primary 35P15; Secondary 35P30, 35J96}

\maketitle


\section{Introduction}

\subsection{Main Results} Let $n \geq  k \geq 1$ be integers, and $\Omega\subset \R^n$ a bounded, $(k-1)$-convex domain. If $\Omega$ is sufficiently regular, then there is a unique $k$-convex function $u_{k,n}$ and unique constant $\lambda_{k,n} > 0$ which solve the {\bf first eigenvalue problem} of the $k$-Hessian operator $S_k$:
\begin{equation}\label{k eigenvalue}
\begin{cases}
S_k(u_{k,n}) = (-\lambda_{k,n} u_{k,n})^k &\text{ in }\Omega\\
u_{k,n} = 0 &\text{ on } \p\Omega\\
\inf_\Omega u_{k,n} = -1.&
\end{cases}
\end{equation}
We recall the definition of $S_k$, $k$-convexity, etc., in Section 2. Existence and uniqueness are due to Lions \cite{Lio85}, when $k=n$, and Wang \cite{W94} otherwise.

Recently, there has been great interest in studying the properties of the {\bf first $(k,n)$-eigenvalue}, $\lambda_{k,n}$, cf. \cite{LS17, Le18, BZ23a, BZ23b, Le25, LZ25, Le26, CLM26} to list a few recent works. An open question has been to determine estimates for $\lambda_{k,n}$; so far, little seems to be known, with even numerical methods limited to dimensions $2$ and $3$ \cite{GLLQ20, LLQ22, CLYY25}.\\

We will fix $\Omega = B_1^n\subset \R^n$ the unit ball from now on. In \cite[Thrm. 1.1 (i)]{Le25}, Le observed that the first $(n,n)$-eigenvalue satisfies:
\[
4\cdot (2n)^{-\frac{1}{n}} \leq \lambda_{n,n} \leq 4,\quad\quad \text{ for all }n\geq 1.
\]
In particular, since the left-hand side increases to $4$, Le showed that $\lambda_{n,n} \rightarrow 4$ as $n\rightarrow \infty$. Since the ball is known to maximize $\lambda_{n,n}$ over all convex domains with a fixed volume \cite{BNT09, Le18}, this provides a universal upperbound; by normalizing the domain with respect to an appropriate quermass, one also gets dimensional lower bounds for smooth, strictly convex domains \cite{Tso89}.

This asymptotic convergence of $\lambda_{n,n}$ is quite different from the well-known asymptotics for $\lambda_{1,n}$, which grows like $n^2$ as $n\rightarrow\infty$. In between these two extremes, the asymptotic behaviour of $\lambda_{k,n}$ is only partially known. In \cite[Lem. 5.2, Thrm. 7.1]{BP21} and \cite[Thrm. 1.1 (ii)]{Le25}, it is shown that the $\lambda_{k,n}$ remain bounded as $n\rightarrow\infty$ if and only if the ratios $\alpha := \frac{n}{k}$ also remain bounded. Le \cite[Cor. 1.3]{Le25} also shows that the $\lambda_{k,n}$ converge to $4$ if the ratios converge to 1. It is thus natural to ask if the $\lambda_{k,n}$ converge whenever the ratios do \cite[Rmk. 1.4]{Le25}.\\

Our first main result answers this in the affirmative, and computes, for the unit ball, the limit of the $\lambda_{k,n}$ as a function of the limiting ratio:
\begin{theorem}\label{limit theorem intro}
Suppose that $(n_i), (k_i)$, $1 \leq k_i \leq n_i$, are sequences such that $\lim_{i\rightarrow\infty} n_i = \infty$ and also:
\[
\lim_{i\rightarrow\infty} \frac{n_i}{k_i} = \alpha < \infty.
\]
Then
\[
\lim_{i\rightarrow\infty} \lambda_{k_i, n_i} = \frac{\alpha^{\alpha+1}}{(\alpha-1)^{\alpha-1}} \left(\frac{2}{\alpha}\right)^{\frac{2}{2-\alpha}}.
\]
\end{theorem}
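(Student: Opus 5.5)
The plan is to reduce to a radial ODE, take $k$-th roots, and pass to a ``max-plus'' limiting problem whose solvability pins down the limit of $\lambda_{k,n}$.

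\textbf{Step 1: radial reduction.} By uniqueness and rotational invariance, $u_{k,n}$ is radial. Write $w=-u_{k,n}(r)\in[0,1]$, so that $w(0)=1$ and $w(1)=0$; $w$ is decreasing and concave because $u$ is convex in $r$. For radial functions,
\[
S_k(u)=\frac{1}{k}\binom{n-1}{k-1} r^{1-n}\big(r^{n-k}(u')^k\big)'.
\]
Integrating the eigenvalue equation from $0$ gives, with $v=-w'\ge 0$,
\[
r^{n-k}v(r)^k=\frac{k\lambda_{k,n}^k}{\binom{n-1}{k-1}}\int_0^r s^{n-1}w(s)^k\,ds .
\]

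\textbf{Step 2: take $k$-th roots and pass to the limit.} Along the sequence, Stirling gives
\[
\binom{n-1}{k-1}^{1/k}\to c_\alpha:=\frac{\alpha^\alpha}{(\alpha-1)^{\alpha-1}} .
\]
The factors $k^{1/k}$ and the Jacobian powers $s^{-1}$ contribute nothing in the limit. The integral becomes a sup by the Laplace principle, since $\|f\|_{L^k}\to\|f\|_\infty$. Before that, I need compactness:

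1. The bounds of \cite{BP21, Le25} show $\lambda_{k_i,n_i}$ stays bounded (and away from $0$) when $\alpha<\infty$.
2. Concavity and monotonicity of $w_i$ give a uniform Lipschitz bound away from $r=1$, via the concavity estimate $|w'|\le w(0)/(1-r)$ type, and then Arzel\`a--Ascoli.

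Passing to a subsequence with $\lambda_i\to\lambda$ and $w_i\to w$, the limit satisfies, with $\mu=\lambda/c_\alpha$,
\[
-w'(r)=\mu\, r^{1-\alpha}\max_{0\le s\le r} s^{\alpha}w(s),\qquad w(0)=1,\ w(1)=0 .
\]
Making this limit rigorous is the main obstacle. It needs uniform control of the $k$-th root of the integral, namely an $L^k\to L^\infty$ convergence that is locally uniform in $r$. I would prove it by sandwiching: bound $\int_0^r s^{n-1}w_i^k$ above by $r^{n}\max(s^{\alpha}w_i)^k$ times harmless factors, and below by the integral over a small interval near the maximizer, using the equicontinuity from item 2. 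Near $r=1$ the condition $w(1)=0$ must survive the limit; concavity keeps $w_i$ uniformly controlled there.

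\textbf{Step 3: solve the limit problem.} Near $0$, $s^\alpha w(s)$ is increasing, so $w'=-\mu r w$ and $w=e^{-\mu r^2/2}$. This persists up to $s_0=\sqrt{\alpha/\mu}$, where $s^\alpha e^{-\mu s^2/2}$ peaks at the value $M=s_0^\alpha e^{-\alpha/2}$. Beyond $s_0$ the max is frozen at $M$, so $w'=-\mu M r^{1-\alpha}$. Imposing $w(1)=0$ gives
\[
e^{-\alpha/2}=\mu M\,\frac{1-s_0^{2-\alpha}}{2-\alpha}.
\]
Using $\mu s_0^2=\alpha$, this simplifies to $\mu s_0^\alpha=2$. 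Hence
\[
\mu^{1-\alpha/2}=2\alpha^{-\alpha/2},\qquad \mu=\alpha\Big(\frac{2}{\alpha}\Big)^{\frac{2}{2-\alpha}}.
\]
Multiplying by $c_\alpha$ gives exactly the claimed value, with $\alpha=2$ read as the limit $e\cdot\alpha^{\alpha+1}/(\alpha-1)^{\alpha-1}$-type expression.

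The case $s_0\ge 1$ would require $w(1)=e^{-\mu/2}>0$, which is impossible. This forces $s_0<1$, so the two-phase structure always occurs. One must also check that the frozen-max phase genuinely persists, i.e. that $s^\alpha w$ does not exceed $M$ again. This holds because $w$ decreases faster than $s^{-\alpha}$ there, which I would verify directly.

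\textbf{Step 4: conclusion.} The limit problem's admissible $\mu$ is therefore unique. Every subsequential limit of $\lambda_{k_i,n_i}$ equals the stated value, so the whole sequence converges.
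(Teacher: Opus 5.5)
Your overall argument works and is genuinely different from the paper's, but one step rests on a false claim. For $k<n$ the eigenfunction is not convex in $r$. $\alpha$-convexity only gives $u'\ge 0$ and $u''\ge -(\alpha-1)u'/r$. In fact, for $\alpha>1$ the limit profile satisfies $v_\alpha''=2(1-\alpha)e^{-\alpha/2}r^{-\alpha}<0$ on $(r_\alpha,1]$, and convexity would pass to uniform limits. So $w=-u$ is not concave, and neither the estimate $|w'|\le 1/(1-r)$ nor your control near $r=1$ is available that way.

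The repair comes straight from your own Step 1. Since $0\le w\le 1$,
\[
0\le -w_i'(r)\le \lambda_i\, C_{k_i,n_i}^{-1/k_i}\, r \quad\text{on }[0,1].
\]
This is uniformly bounded once $\sup_i\lambda_i<\infty$, because $C_{k_i,n_i}^{1/k_i}$ has a positive limit. This single bound gives you three things:
\begin{itemize}
\item Arzel\`a--Ascoli on all of $[0,1]$, so $w(1)=0$ survives the limit;
\item the equicontinuity of $s^{\alpha_i}w_i$ needed for the lower half of your sandwich;
\item a dominating function for passing to the limit in $w_i(r)=1-\int_0^r(-w_i')$.
\end{itemize}
Two smaller points. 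For $\alpha=2$ the frozen-phase integral is $-\log s_0$, so solve that case directly ($s_0=e^{-1/2}$, $\mu=2e$) rather than ``by a limit''. Also, uniqueness of the solution of the limit problem for a given $\mu$ needs a line: it follows from Gr\"onwall, since the right-hand side is Lipschitz in the sup norm with constant $\mu r$.

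The paper never passes to a limit equation. Its upper bound inserts the explicit test function $v_\alpha$ into the Rayleigh quotient and evaluates $E$ and $I$ by Laplace's method. Its lower bound estimates $|u(r)|$ by $E[u]^{1/(k+1)}$ via H\"older, which gives a Beta-function lower bound valid for every $k,n$; the $k$-th root of that bound is then computed by Stirling, with separate cases $\alpha_i<2$, $=2$, $>2$. That route is self-contained (no a priori bound on $\lambda$, no compactness) and gives explicit non-asymptotic bounds such as those in Remark~\ref{binom remark}. Its cost is that $v_\alpha$ has to be guessed.

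Your route turns $L^{k}$ norms into suprema and reduces everything to an explicitly solvable max-plus ODE. It gives no rates, and it needs the cited boundedness of $\lambda$ (or any crude test function in the Rayleigh quotient). In exchange, it explains where $v_\alpha$ comes from and needs no case split in $\alpha_i$. Because the limit problem has a unique solution, it also shows $u_{k_i,n_i}\to v_\alpha$ uniformly for every $\alpha\ge 1$. That is more than Theorem~\ref{eigenfunction theorem intro} gives, since that theorem is restricted to $\alpha<2e$.
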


When $\alpha = 1$ or $2$, the right-hand side should be understood as the limit of the above expression when $\alpha\rightarrow 1$ (resp. $2$), which is $4$ (resp. $8 e$). In particular, we recover \cite[Thrm. 1.1 (i) and Cor. 1.3]{Le25}.\\

We briefly discuss the proof before continuing. The starting point is to use the radial symmetry of the ball to reduce \eqref{k eigenvalue} to the following ODE:
\begin{equation}\label{radial k eigenvalue}
\begin{cases}
{n-1 \choose k-1} \left( \frac{u'_{k,n}}{r}\right)^{k-1}\left(u''_{k,n} + \frac{n-k}{k}\frac{u'_{k,n}}{r}\right) = (-\lambda_{k,n} u_{k,n})^k\\
u(0) = -1\quad\text{ and }\quad u(1) = 0.
\end{cases}
\end{equation}
This ODE makes sense for any $n \geq k \geq 1$, provided we extend the definition of the binomial coefficient ${n-1 \choose k-1}$ by using the Gamma function; the non-integer equation is still variational in nature, and in particular, still admits a characterization of $\lambda_{k,n}$ as a Rayleigh quotient. Verifying these claims is straightforward, but lengthy, and we have opted to omit proofs in order to keep the paper focused.

The Rayleigh quotient formula \cite{Tso90, W94} for $\lambda_{k,n}$ is the main tool used for showing Theorem \ref{limit theorem intro}. The lower bound can be deduced in a manner analogous to that in \cite{Le25}, with minor tweaks. The upper bound follows from using a new test function, which we now define. Set, for $\alpha \not= 2$:
\begin{equation}\label{r alpha and A alpha defn}
r_\alpha := \left(\frac{\alpha}{2}\right)^{\frac{1}{2-\alpha}}, \quad\quad\text{ and }\quad\quad A_\alpha := \frac{2}{2-\alpha}e^{-\alpha/2}.
\end{equation}
Then we define the piecewise function:
\begin{equation}\label{v alpha defn}
v_\alpha(r) := \begin{cases} - e^{-\frac{\alpha r^2}{2r_\alpha^2}} &\text{ for } 0 \leq r\leq r_\alpha \\ A_\alpha (r^{2-\alpha} - 1) &\text{ for }r_\alpha < r \leq 1. \end{cases}
\end{equation}
The case $\alpha = 2$ can be deduced from standard limits -- specifically, we set:
\[
r_\alpha := e^{-\frac{1}{2}},\quad\quad A_\alpha := 2e^{-1}, \quad\text{ and}
\]
\begin{equation}\label{v 2 defn}
v_\alpha(r) := \begin{cases} - e^{-e r^2} &\text{ for } 0 \leq r\leq e^{-1/2} \\  2 e^{-1} \log r &\text{ for } e^{-1/2} < r \leq 1. \end{cases} \tag{\ref{v alpha defn}$'$}
\end{equation}

In Section 3, we show that, on all of $(0,1)$, $v_\alpha$ is $C^2$, increasing, and $\alpha$-convex (which is the suitable ODE version of $k$-convexity for our context; see Definition \ref{alpha convex}). Since it also satisfies the requisite boundary conditions, its Rayleigh quotient naturally provides an upper bound for the $\lambda_{k_i,n_i}$, which we show to be asymptotically exact by using Laplace's method; a lower bound follows from a similar estimate to \cite{Le25}.\\

Although it may appear somewhat mysterious at first, $v_\alpha$ arises in a very natural way -- our second main result shows that, for small enough $\alpha$, it is the limit of the first eigenfunctions when the ratio $\alpha$ remains fixed:
\begin{theorem}\label{eigenfunction theorem intro}
Suppose that $1\leq \alpha < 2e$. Then $u_{k, \alpha k}$ converges to $v_\alpha$ in $C^{1,\beta}([0,1])$ as $k\rightarrow\infty$ for any $\beta\in(0,1)$.
\end{theorem}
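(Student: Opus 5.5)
The plan is to pass to the limit directly in an integrated form of the radial equation \eqref{radial k eigenvalue} with $n=\alpha k$, and then identify the limit by uniqueness for a limiting ``sup-type'' ODE that $v_\alpha$ solves. A direct computation gives
\[
\big(r^{n-k}(u')^k\big)' = k\,r^{n-k}(u')^{k-1}\Big(u''+\tfrac{n-k}{k}\tfrac{u'}{r}\Big) = \frac{k\,r^{n-1}}{\binom{n-1}{k-1}}(-\lambda_{k,n}u)^k .
\]
Integrating from $0$, using $u'(0)=0$, and taking $k$-th roots yields
\[
u_{k,\alpha k}'(r)=\Big(\tfrac{k}{\binom{n-1}{k-1}}\Big)^{1/k}\lambda_{k,\alpha k}\, r^{1-\alpha}\Big(\int_0^r s^{\alpha k-1}|u_{k,\alpha k}(s)|^k\,ds\Big)^{1/k}.
\]
By Stirling, $\big(k/\binom{n-1}{k-1}\big)^{1/k}\to (\alpha-1)^{\alpha-1}/\alpha^\alpha$. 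By Theorem \ref{limit theorem intro}, $\lambda_{k,\alpha k}\to \frac{\alpha^{\alpha+1}}{(\alpha-1)^{\alpha-1}}r_\alpha^{-2}$. The prefactor therefore converges to $c_\alpha:=\alpha r_\alpha^{-2}$.

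\textbf{Step 1 (compactness).} Since $|u_{k,\alpha k}|\le 1$ and $s^{\alpha k-1}\le r^{\alpha k-1}$ on $[0,r]$, the formula gives $0\le u_{k,\alpha k}'(r)\le C r$ uniformly in $k$, so the $u_{k,\alpha k}$ are uniformly Lipschitz. To upgrade this to $C^{1,\beta}$ compactness, I will show the $u_{k,\alpha k}'$ are uniformly equicontinuous, and preferably uniformly Lipschitz. The approach is to differentiate the formula: the logarithmic derivative of $\big(\int_0^r s^{\alpha k-1}|u|^k\,ds\big)^{1/k}$ equals
\[
\frac{r^{\alpha k-1}|u(r)|^k}{k\int_0^r s^{\alpha k-1}|u|^k\,ds}.
\]
Comparing the integral with its value near $s=r$, using the Lipschitz bound on $u$, should make this ratio $O(1/r)$ wherever $|u(r)|$ is not too small. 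Arzel\`a--Ascoli then gives subsequences converging in $C^{1,\beta}$ for every $\beta<1$.

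\textbf{Step 2 (limit equation).} Along such a subsequence $u_{k,\alpha k}\to v$ uniformly, and $\big(\int_0^r s^{\alpha k-1}|u_{k,\alpha k}|^k\big)^{1/k}\to M_v(r):=\sup_{s\le r}s^\alpha|v(s)|$, since this is an $L^k$-norm (with weight $s^{-1}$) converging to an $L^\infty$-norm. Hence $v$ satisfies
\[
v'(r)=c_\alpha\, r^{1-\alpha}M_v(r),\qquad v(0)=-1,\qquad v(1)=0 .
\]

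\textbf{Step 3 (identification).} First, $v_\alpha$ solves this problem. On $[0,r_\alpha]$ the function $s^\alpha e^{-\alpha s^2/(2r_\alpha^2)}$ is increasing, so $M=r^\alpha|v_\alpha|$ and the equation reduces to $v'=c_\alpha r|v|$, whose solution is the Gaussian. After $r_\alpha$ we need $s^\alpha|v_\alpha(s)|$ to stay at most its value at $r_\alpha$. Then $M$ is constant and $v'=\mathrm{const}\cdot r^{1-\alpha}$, which gives the $A_\alpha(r^{2-\alpha}-1)$ piece, with $C^1$ matching at $r_\alpha$. Second, the problem has only one solution. With $\lambda$ fixed it is an initial value problem from $r=0$. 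The map $v\mapsto M_v$ is $1$-Lipschitz in the sup norm on $[0,r]$, and $r^{1-\alpha}M_v(r)$ is controlled by $r\|v\|_\infty$, so a Gronwall argument gives uniqueness. Consequently every subsequence has the same limit $v_\alpha$, and the full family converges.

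\textbf{Main obstacle.} I expect the main difficulty to be the verification on $(r_\alpha,1]$ that $s^\alpha|v_\alpha(s)|=A_\alpha s^\alpha(1-s^{2-\alpha})$ never exceeds $r_\alpha^\alpha e^{-\alpha/2}$, together with the uniform $C^{1,\beta}$ estimate near the points where the $L^k\to L^\infty$ passage degenerates (near $r=1$, where $u\to0$). I suspect the hypothesis $\alpha<2e$ enters here. Either this monotonicity fails, so that the max in $M_v$ switches and $v_\alpha$ is no longer the right limit, or the uniform estimate on $u_{k,\alpha k}''$ breaks down. I would first check the inequality explicitly as a one-variable calculus problem in $s$ for each $\alpha$. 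If it holds for all $\alpha$, I would then look for the role of $2e$ in the second-derivative bound of Step 1.
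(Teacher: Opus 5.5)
Your argument is correct and takes a genuinely different route from the paper's. The two points you leave open both close, for every $\alpha\ge 1$.

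In Step 1, the Lipschitz comparison only controls the ratio where $|u(r)|$ is bounded below, but monotonicity controls it everywhere. Since $u'\ge 0\ge u$, we have $|u(s)|\ge |u(r)|$ for $s\le r$. Writing $J_k(r):=\int_0^r s^{\alpha k-1}|u_k|^k\,ds$, this gives $J_k(r)\ge r^{\alpha k}|u_k(r)|^k/(\alpha k)$, so your logarithmic-derivative term is at most $\alpha/r$ on all of $(0,1)$. Hence $(1-\alpha)\frac{u'}{r}\le u''\le \frac{u'}{r}\le C$ uniformly up to $r=1$. This is exactly Proposition \ref{lower w bound} ($w_{k,n}\ge 1$), so there is no degeneration near $r=1$.

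In Step 3, $s^\alpha|v_\alpha(s)|$ has its unique maximum on $[0,1]$ at $r_\alpha$ for every $\alpha$. For $\alpha<2$ it equals $A_\alpha(s^\alpha-s^2)$ on $[r_\alpha,1]$, whose derivative $A_\alpha s(\alpha s^{\alpha-2}-2)$ is negative for $s>r_\alpha$; the case $\alpha\ge 2$ is analogous. This is the same fact as the unique maximum of $\psi_\e$ in the proof of Proposition \ref{Laplace prop}.

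Two small precisions. First, $ds/s$ has infinite mass near $0$, so prove $J_k(r)^{1/k}\to M_v(r)$ by hand. The upper bound follows from $s^{\alpha k-1}|u_k|^k\le m_k^{k-1/\alpha}$ with $m_k:=\sup_{[0,r]}s^\alpha|u_k|$; the lower bound follows from continuity near a maximizer. Second, in the uniqueness step use $|M_v(r)-M_w(r)|\le r^\alpha\sup_{[0,r]}|v-w|$, so the Gronwall kernel is $c_\alpha r$ rather than the singular $r^{1-\alpha}$.

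The paper never integrates the equation. It takes $k$th roots of the non-divergence form \eqref{radial k eigenvalue}, so on $[0,r_\alpha)$ it must show $\bigl(u''+(\alpha-1)\frac{u'}{r}\bigr)^{1/k}\to 1$. That requires the upper bound on $w_{k,n}$ from the supersolution in Proposition \ref{upper w bound}. This is the only place $\alpha<2e$ enters, together with the derivative barrier of Proposition \ref{entire lower u bound}, which is built on it. The interval $[r_\alpha,1]$ is then handled separately, by Sturm comparison against $(A_\alpha-\e)(r^{2-\alpha}-1)$ and the lower barrier $v_{k,\alpha}$.

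By integrating the divergence form first, you never need a lower bound on the second-order factor. The $k$th root turns the integral into the running maximum $M_v$, and uniqueness for the resulting Volterra equation identifies the limit on all of $[0,1]$ at once. So, contrary to your guess, $2e$ plays no role in your argument. Since Theorem \ref{limit theorem intro} holds for all $\alpha$, your proof, completed as above, gives the convergence for every $\alpha\ge 1$, which the paper only conjectures in Section 7.

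One can even check that the initial value problem with constant $c$ satisfies $v(1)=0$ only for $c=\alpha/r_\alpha^2$. Your method therefore recovers the limit in Theorem \ref{limit theorem intro} from mere boundedness of $\lambda_{k,\alpha k}$. What the paper's approach gives that yours does not is quantitative finite-$k$ information, such as the barriers $u_{k,n}\ge v_{k,\alpha}$ and $u'_{k,n}\ge v'_{k,\alpha}$ of Proposition \ref{entire lower u bound}.
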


This improves upon \cite[Prop. 1.5]{Le25}, where it was shown that, for large $n$, $u_{n,n}$ differs from the linear function $r\mapsto r - 1$ by a quantified amount, independent of $n$. We expect that Theorem \ref{eigenfunction theorem intro} holds for all $\alpha \geq 1$; actually, we expect $u_{k, \alpha k}$ to increase to $v_\alpha$ as $k\rightarrow\infty$ for any $\alpha\geq 1$. See Section 7 for further discussions.\\

The key to showing Theorem \ref{eigenfunction theorem intro} is to bound the quantity:
\[
w_{k,n} := \frac{\alpha u_{k,n}'}{r\left(u''_{k,n} + \frac{n-k}{k}\frac{u'_{k,n}}{r}\right)},
\]
which is (up to a constant) the ratio of the terms on the left-hand side of \eqref{radial k eigenvalue}. We show in Proposition \ref{ratio ODE} that $w_{k,n}$ satisfies a first order ODE which is nearly linear -- dropping the non-linear term yields a differential inequality which implies directly that
\begin{equation}\label{lower w bound intro}
1 \leq w_{k,n} \text{ on all of }[0, 1].
\end{equation}
This bound holds for all $\alpha \geq 1$ and $k\geq 1$, see Proposition \ref{lower w bound}.

It is more difficult to show an upperbound -- we show in Proposition \ref{upper w bound} that, for $\alpha < 2e$ and $k$ sufficiently large, it holds that:
\begin{equation}\label{upper w bound intro}
w_{k,n} \leq \frac{r_\alpha^2}{r_\alpha^2 - r^2}\text{ on }[0, r_\alpha).
\end{equation}
Notice that the right-hand side is what one gets from replacing $u_{k,n}$ with $v_\alpha$ in the definition of $w_{k,n}$.

From these bounds, one then shows Theorem \ref{eigenfunction theorem intro} in two parts -- first, on $[0, r_\alpha]$, the bounds imply that the $k^\text{th}$-root of the equation \eqref{radial k eigenvalue} tends to the first order ODE:
\[
\frac{u'_\infty}{r} = -\frac{\alpha}{r_\alpha^2} u_\infty,
\] 
whose solution is exactly $v_\alpha$. Next, we show that \eqref{upper w bound intro} implies an asymptotically correct lower bound for $u_{k, n}'$ on the same interval. Then, by combining these first two steps with the fact that $v_\alpha'' + (\alpha -1)\frac{v_\alpha'}{r} = 0$ on $(r_\alpha, 1]$, we can extend convergence to the full interval.\\

For our last main result, we show that \eqref{lower w bound intro} actually implies a monotonicity result for the $\lambda_{k, n}$, again when the ratio is fixed:
\begin{theorem}\label{monotone theorem intro}
Fix $\alpha \geq 1$ and $1\leq k < \ell$. Then:
\[
{\alpha k\choose k}^{-\frac{1}{k}}\, \lambda_{k,\alpha k}\, \leq\, {\alpha \ell \choose \ell}^{-\frac{1}{\ell}}\, \lambda_{\ell, \alpha \ell}.
\]
\end{theorem}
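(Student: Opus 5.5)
We plan to combine the lower bound \eqref{lower w bound intro} with a comparison principle for the radial ODE \eqref{radial k eigenvalue}. Fix $\alpha$, write $n=\alpha k$, and let $b_k := {\alpha k \choose k}$. From ${n-1\choose k-1} = \frac{k}{n}{n\choose k}$ we get $\alpha{n-1\choose k-1} = b_k$.

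\emph{Step 1: rewrite the equation in terms of $w_{k,n}$.} By the definition of $w_{k,n}$,
\[
u''+(\alpha-1)\frac{u'}{r} = \frac{\alpha u'}{r\,w_{k,n}}.
\]
So \eqref{radial k eigenvalue} for $u=u_{k,\alpha k}$ becomes
\[
\frac{b_k}{w_{k,n}}\Big(\frac{u'}{r}\Big)^{k} = (-\lambda_{k,\alpha k}u)^k .
\]
Equivalently,
\[
\Big(\frac{u'}{r}\Big)^k = w_{k,n}\, \mu_k^k (-u)^k, \qquad \mu_k := b_k^{-1/k}\lambda_{k,\alpha k}.
\]

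\emph{Step 2: $u_{k,\alpha k}$ is a supersolution for the $\ell$-problem.} Insert $u=u_{k,\alpha k}$ into the $\ell$-operator with $n'=\alpha\ell$. The ratio is still $\alpha$, so the second-order factor and the $\alpha$-convexity of $u$ are unchanged. We obtain
\[
{\alpha\ell-1\choose \ell-1}\Big(\frac{u'}{r}\Big)^{\ell-1}\Big(u''+(\alpha-1)\frac{u'}{r}\Big)
= \frac{b_\ell}{w_{k,n}}\Big(\frac{u'}{r}\Big)^{\ell}
= b_\ell\, w_{k,n}^{\ell/k-1}\,\mu_k^\ell(-u)^\ell .
\]
Since $\ell>k$ and $w_{k,n}\ge 1$ (Proposition \ref{lower w bound}), the factor $w_{k,n}^{\ell/k-1}$ is at least $1$. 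Hence
\[
S_\ell(u)\ \ge\ (-\mu u)^\ell, \qquad \mu := b_\ell^{1/\ell}\mu_k .
\]
Also $u(1)=0$ and $u(0)=-1$.

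\emph{Step 3: comparison.} We need the analogue of the fact that a positive supersolution of $-\Delta\phi\ge\mu\phi$ forces $\mu\le\lambda_1$. Here the statement is: if $v$ is $\alpha$-convex, $v<0$ on $[0,1)$, $v(1)=0$, and $S_\ell(v)\ge(-\mu v)^\ell$, then $\mu\le\lambda_{\ell,\alpha\ell}$. Granting this, $b_\ell^{1/\ell}\mu_k\le\lambda_{\ell,\alpha\ell}$, which is exactly the claimed inequality.

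I expect this comparison lemma to be the main obstacle, since the Rayleigh quotient only gives the opposite (upper) bound. The approach I would try first is a scaling/touching argument. Suppose $\mu>\lambda:=\lambda_{\ell,\alpha\ell}$. Let $t^*$ be the largest $t>0$ with $t\,u_{\ell,\alpha\ell}\ge v$ on $[0,1]$. Such a $t^*$ is finite and positive because both functions have positive derivative at $r=1$ (Hopf-type behaviour, read off from the ODE) and are negative inside.

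Next, use the homogeneity of degree $\ell$ of both sides of the equation. The function $t^*u_{\ell,\alpha\ell}$ solves $S_\ell=(-\lambda\,\cdot)^\ell$, while $v$ satisfies $S_\ell(v)\ge(-\mu v)^\ell>(-\lambda v)^\ell$. At a contact point, either interior or at $r=1$ via the derivatives, the strict inequality contradicts the ordering. This follows from the comparison principle for the radial operator, whose admissible region is $u'\ge0$ with $u''+(\alpha-1)u'/r\ge0$, as in Wang's uniqueness proof.

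Alternatively, one can integrate the radial ODE in divergence form, $\big(r^{\alpha\ell-\ell}(u')^\ell\big)'/r^{\alpha\ell-1}$ up to constants, against a Picone-type quotient. I would check that case carefully and otherwise defer to the known uniqueness/comparison theory \cite{W94}.
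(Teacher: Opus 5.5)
Your Steps 1 and 2 are correct. They are the mirror image of the paper's computation, and the only difference is which eigenfunction serves as the test function.

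\textbf{The paper's direction.} The paper inserts $u_{\ell,\alpha\ell}$ into the $k$-operator. The same algebra gives
\[
S_{k,\alpha k}(u_{\ell,\alpha\ell}) = b_k\, w_{\ell,\alpha\ell}^{\frac{k}{\ell}-1}\,\mu_\ell^k\,(-u_{\ell,\alpha\ell})^k \leq \big(b_k^{1/k}\mu_\ell\big)^k(-u_{\ell,\alpha\ell})^k, \qquad \mu_\ell := b_\ell^{-1/\ell}\lambda_{\ell,\alpha\ell}.
\]
The inequality now holds because the exponent $\frac{k}{\ell}-1$ is negative and $w_{\ell,\alpha\ell}\geq 1$. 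Multiplying by $(-u_{\ell,\alpha\ell})\,r^{\alpha k-1}$ and integrating gives $E_{k,\alpha k}[u_{\ell,\alpha\ell}]\leq (b_k^{1/k}\mu_\ell)^k\, I_{k,\alpha k}[u_{\ell,\alpha\ell}]$. Since \eqref{Rayleigh quotient formula} is an infimum, this yields $\lambda_{k,\alpha k}\leq b_k^{1/k}\mu_\ell$ immediately.

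Testing in your direction produces a supersolution, which the variational formula cannot use. That is the only reason you need the comparison lemma of Step 3. The trade-off is this:
\begin{itemize}
\item the paper's route costs one line, but relies on the Rayleigh quotient characterization, which the paper states without proof for non-integer parameters;
\item your route, once completed, uses only the ODE, regularity of the eigenfunctions, and $w_{k,\alpha k}\geq 1$. In effect it is a maximum-principle (Berestycki--Nirenberg--Varadhan type) characterization of $\lambda_{\ell,\alpha\ell}$.
\end{itemize}

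\textbf{The gap in Step 3.} Your touching argument is sound for contact at some $r_0\in[0,1)$. At $r_0=0$ the operator reduces to $b_\ell\,(u''(0))^\ell$, and the second-order comparison still applies. However, the claim that at $r=1$ ``the strict inequality contradicts the ordering'' fails as stated. Suppose the only contact is $t^*u'_{\ell,\alpha\ell}(1)=v'(1)$. Then $S_\ell(t^*u_{\ell,\alpha\ell})(1)$ and $(-\mu v(1))^\ell$ both vanish, so there is no strict inequality. For $g:=t^*u_{\ell,\alpha\ell}-v$, the second-order test at $r=1$ only gives $g''(1)=0$.

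\textbf{How to close it.} You need a one-dimensional Hopf step.
\begin{itemize}
\item Choose $\delta>0$ so that $t^*u'_{\ell,\alpha\ell}$ and $v'$ are positive on $(1-\delta,1)$. There the mean value theorem gives $S_\ell(t^*u_{\ell,\alpha\ell})-S_\ell(v)=a\,g''+c\,g'$ with $a>0$ and $c$ continuous.
\item The left side is at most $\lambda^\ell(-t^*u_{\ell,\alpha\ell})^\ell-\mu^\ell(-v)^\ell$. This is strictly negative because $0\leq -t^*u_{\ell,\alpha\ell}\leq -v$, $-v>0$, and $\mu>\lambda$.
\item With $\Phi(r):=\exp\big(\int_{1-\delta}^r c/a\big)$, this gives $(\Phi g')'<0$. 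Together with $g'(1)=0$, it forces $g'>0$ on $(1-\delta,1)$.
\item Hence $g<0$ on $(1-\delta,1)$, contradicting $g\geq 0$.
\end{itemize}
With this added, your proof is complete. Note, though, that swapping the roles of $k$ and $\ell$ in Step 2 makes Step 3 unnecessary.
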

In particular, combining this with Theorem \ref{limit theorem intro} implies that the sequence  ${\alpha k\choose k}^{-\frac{1}{k}}\lambda_{k,\alpha k}$ increases to $\frac{\alpha}{r_\alpha^2}$. In Proposition \ref{binom prop}, it is shown that the function $k\mapsto {\alpha k \choose k}^{\frac{1}{k}}$ is strictly increasing for any $\alpha > 1$; as such, we immediately have:

\begin{corollary}\label{monotone cor intro}
For any $\alpha \geq 1$ and $1\leq k < \ell$, we have
\[
\lambda_{k,\alpha k} \leq \lambda_{\ell, \alpha\ell}.
\]
\end{corollary}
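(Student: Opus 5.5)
The corollary follows directly from Theorem \ref{monotone theorem intro} together with the monotonicity of the binomial factor asserted in Proposition \ref{binom prop}. Fix $\alpha\geq 1$ and $1\leq k<\ell$. Theorem \ref{monotone theorem intro} gives
\[
\lambda_{k,\alpha k}\;\leq\; \frac{{\alpha k\choose k}^{\frac{1}{k}}}{{\alpha \ell\choose \ell}^{\frac{1}{\ell}}}\;\lambda_{\ell,\alpha\ell},
\]
so it suffices to show that the prefactor is at most $1$, that is,
\[
{\alpha k\choose k}^{\frac1k}\leq {\alpha \ell\choose \ell}^{\frac1\ell}.
\]
All quantities involved are positive. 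The binomial coefficients are interpreted via the Gamma function as in the discussion of \eqref{radial k eigenvalue}, and $\lambda_{k,n}>0$. Hence multiplying and dividing by these factors does not reverse any inequality.

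For $\alpha>1$, the displayed inequality is exactly Proposition \ref{binom prop}: the map $k\mapsto{\alpha k\choose k}^{1/k}$ is strictly increasing. Applying it with $k<\ell$ gives the inequality, and therefore $\lambda_{k,\alpha k}\leq\lambda_{\ell,\alpha\ell}$.

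The case $\alpha=1$ must be treated separately, since Proposition \ref{binom prop} is stated only for $\alpha>1$. When $\alpha=1$ we have ${k\choose k}^{1/k}=1={\ell\choose \ell}^{1/\ell}$, so both prefactors equal $1$ and Theorem \ref{monotone theorem intro} already reads $\lambda_{k,k}\leq\lambda_{\ell,\ell}$.

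There is no real obstacle here, since all the content sits in Theorem \ref{monotone theorem intro} and Proposition \ref{binom prop}. The only step needing care is the edge case $\alpha=1$ just handled. One should also confirm that the inequality direction survives the division, which holds because every factor is positive. If one wanted Proposition \ref{binom prop} itself, the natural route is to show that $\frac1x\log\frac{\Gamma(\alpha x+1)}{\Gamma(x+1)\Gamma((\alpha-1)x+1)}$ is increasing in $x$. This can be done by writing it as an integral average using $\log\Gamma$ and exploiting convexity of $\log\Gamma$, but that argument is not needed once the proposition is taken as given.
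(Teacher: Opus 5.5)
Your proposal is correct and is the paper's argument: combine Theorem \ref{monotone theorem intro} with the monotonicity of $k\mapsto{\alpha k\choose k}^{1/k}$ from Proposition \ref{binom prop}, and note that the case $\alpha=1$ is trivial because the binomial factors equal $1$. Your closing aside is not needed, but convexity of $\log\Gamma$ alone would not prove Proposition \ref{binom prop}: $\log{\alpha k\choose k}$ is a signed combination of $\log\Gamma$ terms, so the paper instead proves its convexity through a superadditivity property of the integrand in the trigamma integral representation.
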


\begin{remark}\label{binom remark}
Using standard estimates for binomial coefficients and Proposition \ref{lower bound}, cf. \cite{Le25}, we deduce the concrete bounds for $\alpha > 2$:
\[
(\alpha-2) \alpha^{1-\frac{1}{k}} \leq \lambda_{k,\alpha k} \leq \frac{\alpha^{\alpha + 1}}{(\alpha-1)^{\alpha-1}}\left(\frac{2}{\alpha}\right)^{\frac{2}{2-\alpha}}.
\]
Similar bounds can be derived for $\alpha \leq 2$.\\
\end{remark}

We conclude with a few more remarks. First, our results adapt immediately to the first eigenvalue of the complex $k$-Hessian equation, since the radial reduction of this equation is of the same form as \eqref{radial k eigenvalue}, up to a constant; we discuss the specifics in Section 6.

Second, there still appear to be several follow-up questions one could consider. For instance, in line with Theorem \ref{monotone theorem intro}, numerical evidence suggests that the $u_{k, \alpha k}$ actually increase to $v_\alpha$, for all $\alpha \geq 1$. We include further discussions in Section 7, along with a few numerical approximations for various $\lambda_{k,n}$; for large $n$, these are new, even in the case when $k = n$.

%
%
%
%
%
%

\subsection{Outline}

In Section 2, we collect necessary background definitions and results. In Section 3, we prove Theorem \ref{limit theorem intro}; in Section 4, we prove Theorem \ref{monotone theorem intro}; and in Section 5, we prove Theorem \ref{eigenfunction theorem intro}. In Section 6, we indicate how our results apply to the complex setting. In Section 7, we discuss potential follow-up questions and include some numerical computations.

\medskip

{\bf Statement on AI Usage:}  Generative AI was used to prove Proposition \ref{binom prop} (ChatGPT 5.6), produce the figures in Section 7 (Claude {Opus 4.8)}, and proofread the paper (ChatGPT 5.6). It was not used for any writing, for production of the numerical solver, or to prove other results; the authors retain sole responsibility for the veracity of the arguments.

\medskip

{\bf Acknowledgments:}  This project was carried out as a part of the Purdue Experimental Math Lab in Spring 2026; we extend our sincere thanks to the organizer, Thomas Sinclair, and also to Sina Nadi, who served as the graduate student mentor. The first author would also like to thank Nam Le, Laszlo Lempert, and G\'abor Sz\'ekelyhidi for interesting discussions. 

\section{Background}

\subsection{Definitions and Notation}

Let $1\leq k\leq n$ be integers for the moment. We set $\Omega = B_1^n\subset \R^n$ to be the unit ball, centered at the origin. Given $u\in C^\infty(B_1^n)$, the {\bf $k$-Hessian operator}, acting on $u$, is:
\[
S_{k,n}(u) := \sigma_k(\lambda),
\]
where $\lambda = (\lambda_1, \ldots, \lambda_n)$ are the eigenvalues of the Hessian of $u$ and:
\[
\sigma_k(\lambda) = \sum_{i_1 < \ldots < i_k} \lambda_{i_1}\cdots \lambda_{i_k}
\]
is the $k^\text{th}$ elementary symmetric polynomial. Note that $S_{1, n}(u) = \Delta u = \sum_i \lambda_i$, is the Laplacian, and $S_{n, n}(u) = \MA(u) = \lambda_1\cdots \lambda_n$, is the Monge-Amp\`ere operator.

The operator $S_{k,n}$ is fully non-linear, and thus elliptic only on a certain cone of functions, known as the {\bf $k$-convex cone}. Specifically, $u\in C^\infty(B_1^n)$ is {\bf $k$-convex} if:
\[
S_{i,n}(u) \geq 0 \quad\text{ for all integers }1\leq i \leq k.
\]
It is well known that $u$ is $n$-convex iff it is convex in the usual sense.

Frequently, we will use various binomial coefficients; since we further work with non-integer parameters, these should be understood using the Gamma function. Specifically, recall that, for an integer $m\geq 0$, we have
\[
\Gamma(m + 1) := m!,
\]
and that $\Gamma(x)$ provides a smooth extension of the factorial to all $x > 0$. For any $0 \leq k \leq n$, not necessarily integer, we then set:
\[
{n \choose k} := \frac{\Gamma(n+1)}{\Gamma(k+1) \Gamma(n-k + 1)}.
\]
For the sake of brevity, we write $C_{k,n} := {n \choose k}$.\\

We will also need to use the Beta function, which we recall is defined for $x, y > 0$ as:
\[
B(x,y) := \int_0^1 s^{x-1} (1-s)^{y-1}\, ds.
\]
Additionally, $B$ can be expressed in terms of $\Gamma$ or the binomial coefficients as:
\begin{equation}\label{beta to binom}
B(x, y) = \frac{\Gamma(x)\Gamma(y)}{\Gamma(x + y)} = \frac{x+y}{xy} {x + y \choose y}^{-1}.
\end{equation}

\subsection{Radial $k$-convex functions}

We briefly derive equation \eqref{radial k eigenvalue}, along with its divergence form, from the definition of $S_{k,n}$.

\begin{proposition}\label{radial k Hessian proposition}
Suppose that $u$ is a smooth, radially symmetric function on $B_1^n\subset \R^n$, i.e. $u(x) = u(r)$, where $r = \sqrt{\sum_i x_i^2}$. Then for any integer $1\leq k\leq n$, we have:
\begin{align*}
S_{k,n}(u) &= {n-1 \choose k-1}\left(\frac{u'}{r}\right)^{k-1}\left(u'' + \frac{n-k}{k} \frac{u'}{r}\right)\\
&= \frac{C_{k,n}}{n}\, r^{1-n} \frac{d}{dr}\left( r^{n-k} (u')^{k}\right).
\end{align*}
\end{proposition}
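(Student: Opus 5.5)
The plan is to compute the Hessian eigenvalues of a radial function explicitly, evaluate $\sigma_k$ on them combinatorially, and then check the divergence form by differentiating directly.

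\textbf{Step 1: Hessian eigenvalues.} At a point $x\neq 0$ with $r=|x|$, we have
\[
D^2u(x) = \frac{u'}{r}\, I + \left(u'' - \frac{u'}{r}\right)\frac{x x^T}{r^2}.
\]
Hence the eigenvalues are $\lambda_1 = u''$ in the radial direction and $\lambda_2=\cdots=\lambda_n = u'/r$, the latter with multiplicity $n-1$ on $x^\perp$.

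\textbf{Step 2: Evaluate $\sigma_k$.} Split the $k$-subsets of $\{1,\dots,n\}$ according to whether they contain the index $1$:
\begin{itemize}
\item there are ${n-1\choose k-1}$ subsets containing $1$, each contributing $u''(u'/r)^{k-1}$;
\item there are ${n-1\choose k}$ subsets not containing $1$, each contributing $(u'/r)^k$.
\end{itemize}
Since ${n-1\choose k} = \frac{n-k}{k}{n-1\choose k-1}$, factoring out ${n-1\choose k-1}(u'/r)^{k-1}$ gives the first formula.

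\textbf{Step 3: Divergence form.} Differentiate:
\[
\frac{d}{dr}\left(r^{n-k}(u')^k\right) = k\, r^{n-k}(u')^{k-1}u'' + (n-k)\, r^{n-k-1}(u')^k = k\, r^{n-k}(u')^{k-1}\left(u'' + \frac{n-k}{k}\frac{u'}{r}\right).
\]
Multiplying by $r^{1-n}$ gives $k\,(u'/r)^{k-1}\left(u''+\frac{n-k}{k}\frac{u'}{r}\right)$. The constants then match by the identity $\frac{C_{k,n}}{n}\,k = {n-1\choose k-1}$.

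\textbf{Step 4: The origin.} At $r=0$, smoothness and radial symmetry force $u'(0)=0$, and $u'/r \to u''(0)$. Both sides extend continuously, and the Hessian there is $u''(0)I$, so the identity holds at $0$ by continuity.

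Nothing here is a real obstacle. The only points needing care are the two binomial identities in Steps 2 and 3 and this treatment of the origin.
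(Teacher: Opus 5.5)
Your proposal is correct and follows the paper's proof essentially step for step: the same decomposition of the Hessian into eigenvalues $u''$ and $u'/r$, the same split of $\sigma_k$ into the ${n-1\choose k-1}$ and ${n-1\choose k}$ terms, and the same expansion of the divergence form using $\frac{k}{n}C_{k,n} = {n-1\choose k-1}$. Your extension of the identity to $r=0$ by continuity is a small point that the paper leaves implicit.
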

\begin{proof}
If $u$ is radial, then its Hessian matrix $(u_{ij})$ is of the form:
\[
u_{ij} = \frac{u'}{r} \delta_{ij} + \left(u'' - \frac{u'}{r}\right)\frac{x_i x_j}{r^2},
\]
which has eigenvalues $u''$, with multiplicity $1$, and $\frac{u'}{r}$, with multiplicity $n-1$. It follows from the definition that:
\begin{align*}
S_{k,n}(u) &= {n-1 \choose k-1}\left(\frac{u'}{r}\right)^{k-1} u'' + {n-1\choose k} \left(\frac{u'}{r}\right)^{k}\\
& = {n-1 \choose k-1}\left(\frac{u'}{r}\right)^{k-1}\left(u'' + \frac{n-k}{k} \frac{u'}{r}\right).
\end{align*}
To see the divergence form, expand:
\[
\frac{r^{1-n}}{k} \frac{d}{dr}\left(r^{n-k}(u')^k\right) = \frac{n-k}{k} \left(\frac{u'}{r}\right)^k + \left(\frac{u'}{r}\right)^{k-1} u''
\]
and use that $\frac{1}{k}C_{k-1, n-1} = \frac{1}{n}C_{k,n}$. \qedhere \\
\end{proof}

\subsection{Eigenvalues for the radial $k$-Hessian}

As mentioned in the introduction, we will extend the radial $k$-Hessian operator to arbitrary parameters $n$ and $k$, using the formula in Proposition \ref{radial k Hessian proposition}. The appropriate notion of $k$-convexity for these non-integer operators is clear:
\begin{definition}\label{alpha convex}
Suppose that $1 \leq k \leq n$; set $\alpha := \frac{n}{k}$. We say $u \in C^2(0,1)$ is {\bf $\alpha$-convex} if $\limsup_{r\rightarrow 0} \frac{u'}{r} < \infty$, and both:
\[
u'(r) \geq 0 \quad\text{ and }\quad u'' + (\alpha - 1)\frac{u'}{r} \geq 0. 
\]
\end{definition}
\noindent Clearly $S_{k,n}(u) \geq 0$ for $\alpha$-convex $u$.

We now extend the eigenvalue problem to general $1 \leq k \leq n$:
\begin{definition}
We say that an $\alpha$-convex function $u$ and a real number $\lambda > 0$ solve the {\bf radial $(k,n)$-eigenvalue problem} if they satisfy \eqref{radial k eigenvalue}.
\end{definition}

To see the variational structure of $S_{k,n}$, we define the {\bf $\alpha$-energy} to be:
\[
E_{k,n}[u] := \frac{1}{k+1} \int_0^1 (-u)\, S_{k,n}(u)\ r^{n-1}dr. 
\]
Note the factor of $r^{n-1}$ in the integral.

Standard computations show that, if $u_t = u + tv$, for $t \in (-\e, \e)$, $u, v$ smooth, and $v$ compactly supported, then:
\[
\frac{d}{dt}\Big|_{t=0} E_{k,n}[u_t] = \int_0^1 (-v) S_{k,n}(u)\ r^{n-1}dr.
\]
For $\alpha$-convex $u$ with $u(1) = 0$, it will often be convenient to use the alternate expression:
\begin{equation}\label{C expression for E}
E_{k,n}[u] = \frac{C_{k,n}}{n(k+1)} \int_0^1 r^{n-k}(u')^{k+1}\, dr,
\end{equation}
which comes from applying integration by parts to the divergence form of $S_{k,n}$, Proposition \ref{radial k Hessian proposition}.

Set also:
\[
I_{k,n}[u] := \frac{1}{k+1}\int_0^1 (-u)^{k+1}\ r^{n-1} dr.
\]
It is straightforward to verify that the solutions to \eqref{radial k eigenvalue} are the critical points of the functional:
\[
E_{k,n}[u] - \lambda_{k,n}^k I _{k,n}[u],
\]
acting on the space of $\alpha$-convex $u$.

By repeating the arguments for the (not necessarily radial) integer $k$-Hessian equation \cite{W94}, one can then show that:
\begin{proposition}
Suppose that $1\leq k \leq n$, and set $\alpha = \frac{n}{k}$. Then there exists a unique $\alpha$-convex function $u_{k,n}$ and a unique number $\lambda_{k,n} > 0$ which solve the $(k,n)$-eigenvalue problem \eqref{radial k eigenvalue}. Further, we have the following variational formula for $\lambda_{k,n}$ in terms of Rayleigh quotients:
\begin{equation}\label{Rayleigh quotient formula}
\lambda_{k,n}^{k} = \inf \left\{ \frac{E_{k,n}[u]}{I_{k,n}[u]}\ \middle|\ u\text{ $\alpha$-convex with }u(1) = 0, u(0) = -1\right\}.
\end{equation}
\end{proposition}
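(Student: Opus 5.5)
We would follow Wang's scheme \cite{W94} in the radial, non-integer setting, where the reduction to one variable makes each step explicit. We start with the Rayleigh quotient. Set $\mu^k$ equal to the infimum in \eqref{Rayleigh quotient formula}, taken over $\alpha$-convex $u$ with $u(1)=0$ and $u(0)=-1$. Since both $E_{k,n}$ and $I_{k,n}$ are $(k+1)$-homogeneous, the normalization $u(0)=-1$ is immaterial.

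The first step is positivity, $\mu>0$. For $\alpha$-convex $u$ with $u(1)=0$ we have $-u(r)=\int_r^1 u'$. Hölder's inequality with the weight $s^{n-k}$ gives
\[
(-u(r))^{k+1}\leq \Big(\int_r^1 s^{n-k}(u')^{k+1}ds\Big)\Big(\int_r^1 s^{-(n-k)/k}ds\Big)^{k}.
\]
Integrating against $r^{n-1}$ and using \eqref{C expression for E} yields $I_{k,n}\leq C\,E_{k,n}$. The constant is finite, although when $\alpha\geq 2$ the inner integral behaves like $r^{2-\alpha}$, so we need $n-1+k(2-\alpha)>-1$, i.e. $2k>0$. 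In the same way we get uniform bounds: if $E_{k,n}[u_j]$ is bounded and $\|u_j\|_{L^{k+1}(r^{n-1})}=1$, then $u_j'$ is bounded in the weighted $L^{k+1}$ space. Also $r^{n-k}(u_j')^{k}$ is nondecreasing by $\alpha$-convexity, because its derivative is $k\,r^{n-k-1}(u')^{k-1}\big(r u''+(\alpha-1)u'\big)\geq 0$. Monotonicity together with the energy bound gives locally uniform bounds on $u_j'$ on $(0,1]$.

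The second step, existence, is the main technical point. We take a minimizing sequence. The monotone quantities $\phi_j=r^{n-k}(u_j')^k$ converge pointwise by Helly's theorem, which gives a limit $u$ that is $\alpha$-convex with $u(1)=0$. Lower semicontinuity of $E$ follows from Fatou, and convergence of $I$ follows from the uniform Hölder bound above together with dominated convergence. Hence $u$ is a minimizer. The minimizer satisfies the Euler--Lagrange equation \eqref{radial k eigenvalue} with $\lambda=\mu$. This needs care because $\alpha$-convexity is a constraint. We handle it as follows. We perturb in the variable $\phi=r^{n-k}(u')^k$, where the constraint is simply that $\phi$ is nondecreasing and nonnegative. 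Comparing with the unconstrained first variation, the minimizer satisfies $\big(r^{n-k}(u')^k\big)'=\frac{n}{C_{k,n}}\mu^k(-u)^kr^{n-1}$ on the set where $\phi$ is strictly increasing. Away from that set $\phi'=0$ while the right-hand side is positive, which contradicts minimality, since increasing $\phi$ slightly there lowers the quotient. So the equation holds everywhere. We then get regularity near $0$ from the integral form $u'(r)=\big(\frac{n}{C_{k,n}}\mu^k r^{k-n}\int_0^r(-u)^ks^{n-1}ds\big)^{1/k}\sim c\,r$, which shows $\limsup u'/r<\infty$. Finally we normalize so that $u(0)=-1$.

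The third step is uniqueness, and this is the main obstacle. Suppose $(v,\lambda)$ is another solution. Testing the equation for $v$ against $v$ itself, via the divergence form in Proposition~\ref{radial k Hessian proposition}, gives $E[v]=\lambda^kI[v]$, so $\lambda\geq\mu$. For the reverse inequality and equality of eigenfunctions we would use the radial ODE directly, by a shooting argument. The integral form above defines, for each $\lambda$, a unique solution $u_\lambda$ with $u_\lambda(0)=-1$; uniqueness holds because the right-hand side is monotone and Lipschitz once $u<0$. By scaling $r\mapsto \lambda r$, we have $u_\lambda(r)=u_1(\lambda r)$. Consequently the eigenvalues are exactly the values $\lambda$ at which $u_1$ vanishes, namely $\lambda$ equal to a zero of $u_1$. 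Since $u_1'>0$ as long as $u_1<0$, $u_1$ has a first zero $r_0$. An $\alpha$-convex eigenfunction must be negative on $[0,1)$, since it is increasing with $u(1)=0$. Therefore $\lambda=r_0$ is forced, and this proves uniqueness of both $\lambda_{k,n}$ and $u_{k,n}$. Since the minimizer from the second step is one such solution, we conclude $\mu=\lambda_{k,n}$, which is formula \eqref{Rayleigh quotient formula}.
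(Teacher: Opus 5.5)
The paper does not prove this proposition. It defers to Wang's PDE arguments for the integer $k$-Hessian equation and omits the verification. Your route is genuinely different and one-dimensional: a direct method in $\phi=r^{n-k}(u')^k$ with Helly compactness for existence, and initial-value uniqueness plus scaling for uniqueness. This avoids porting comparison principles and a priori estimates to non-integer $(k,n)$, and it identifies $\lambda_{k,n}$ through the first zero of a single normalized solution. The architecture works, but several steps need repair.

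In Step 3 your scaling is wrong. Under $u(r)\mapsto u(cr)$, $S_{k,n}$ picks up $c^{2k}$ while $(-\lambda u)^k$ picks up only $\lambda^k$. So $u_\lambda(r)=u_1(\sqrt{\lambda}\,r)$ and $\lambda_{k,n}=r_0^2$, not $r_0$; for $k=n=1$, $u_1=-\cos r$ and $\lambda_{1,1}=\pi^2/4$. Also, ``$u_1'>0$ while $u_1<0$'' does not produce a zero: the available bound $u_1'(r)\geq c\,r^{1-\alpha}$ is integrable at infinity when $\alpha>2$. Uniqueness does not need this claim, though. Any eigenpair $(v,\lambda)$ forces $v=u_1(\sqrt{\lambda}\,\cdot)$, so $\sqrt{\lambda}$ is the first zero of $u_1$, and existence comes from Step 2.

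The real gap is in the existence step.
\begin{itemize}
\item The energy bound controls $\phi_j$ only on compact subsets of $(0,1)$, not up to $r=1$. This is harmless given your H\"older tail bound.
\item The Helly limit $\phi$ is merely nondecreasing, so $u$ need not be $C^2$ or admissible. Minimize instead over the relaxed class of nondecreasing $\phi\geq 0$. Once the Euler--Lagrange equation holds, the minimizer is a genuine eigenfunction and the two infima coincide.
\item Most importantly, your dichotomy ``$\phi$ strictly increasing or $\phi'=0$'' ignores jumps of $\phi$, in particular an atom at $r=0$. This is not excluded a priori: for $\alpha<2$ the function $r^{2-\alpha}-1$ has $\phi\equiv(2-\alpha)^k$ and finite energy. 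Yet your integral formula for $u'$, and with it $\limsup u'/r<\infty$, presupposes $\phi(0^+)=0$.
\end{itemize}

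A rigorous version uses the first variation of $E-\mu^k I$ in $\phi$. Its density is $\frac1k\phi^{\frac1k-1}s^{1-\alpha}G(s)$, with $G(s)=\frac{C_{k,n}}{n}\phi(s)-\mu^k\int_0^s(-u)^k t^{n-1}\,dt$. Test it against nondecreasing perturbations, and against perturbations absolutely continuous with respect to $d\phi$ (these can be taken in both signs). Then use that $G$ is strictly decreasing on any interval where $\phi$ is constant. This rules out gaps in the support of $d\phi$, so $G\equiv 0$, which is exactly the integral form, including $\phi(0^+)=0$.

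Simpler still is the half-linear Picone identity for
\[
\big(r^{n-k}|y'|^{k-1}y'\big)'+\tfrac{n}{C_{k,n}}\lambda^k r^{n-1}y^k=0 .
\]
Apply it to $y=-u_1(\sqrt{\lambda}\,\cdot)$ whenever this is positive on $[0,1]$. It gives $E_{k,n}[x]\geq\lambda^k I_{k,n}[x]$ for every admissible $x$. Letting $\lambda\to\infty$ then shows that $u_1$ must vanish. Letting $\sqrt{\lambda}\uparrow r_0$ gives the Rayleigh formula, with no compactness or constrained variations needed.
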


We refer to $u_{k,n}$ as the {\bf first radial $(k,n)$-eigenfunction} and $\lambda_{k,n}$ as the {\bf first radial $(k,n)$-eigenvalue.}

\subsection{Stirling's Formula and Laplace's Method}

Lastly, we recall some standard asymptotic formula and prove an increasing property for certain binomial coefficients. The first formula we will need is Stirling's formula for $\log \Gamma$ \cite[Eqn. (5.11.1)]{OLBC10}, which says that
\[
\log \Gamma(x) = (x-1) \log (x-1) - (x-1) + \frac{1}{2}\log(2\pi (x-1)) + \frac{1}{12(x-1)} + O( x^{-3}).
\]
\begin{proposition}\label{binom coeff prop}
Suppose that $(n_i)$, $(k_i)$, $1 \leq k_i \leq n_i$, are two sequences of real numbers such that $(n_i)$ increases to $\infty$ as $i\rightarrow\infty$ and:
\[
\lim_{i\rightarrow\infty} \frac{n_i}{k_i} = \alpha < \infty.
\]
Then:
\[
\lim_{i\rightarrow\infty} {n_i \choose k_i}^{\frac{1}{k_i}} = \begin{cases} \frac{\alpha^{\alpha}}{(\alpha - 1)^{\alpha -1}} & \text{ if }\alpha > 1 \\ 
1 &\text{ if }\alpha = 1.
\end{cases}
\]
\end{proposition}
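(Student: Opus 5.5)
We plan to take logarithms and apply Stirling's formula for $\log\Gamma$ to each of the three Gamma factors in
\[
\log {n \choose k} = \log\Gamma(n+1) - \log\Gamma(k+1) - \log\Gamma(n-k+1),
\]
writing $n = n_i$, $k = k_i$. Since $n_i\to\infty$ and $n_i/k_i\to\alpha<\infty$, we also have $k_i\to\infty$. The whole point is to show that $\frac{1}{k}\log{n\choose k}$ converges to $\alpha\log\alpha - (\alpha-1)\log(\alpha-1)$ when $\alpha>1$, and to $0$ when $\alpha=1$. We treat the two cases separately, because when $\alpha=1$ the quantity $n-k$ need not tend to infinity, and in the non-integer setting it may even lie in $[0,1)$.

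\emph{Case $\alpha>1$.} Here $n-k = k(n/k-1)\to\infty$, so all three arguments tend to infinity. We use the cruder form $\log\Gamma(x+1) = x\log x - x + O(\log x)$, which follows from the stated Stirling formula. The linear terms cancel exactly, since $n - k - (n-k) = 0$, and the error terms are $O(\log n) = o(k)$. This leaves
\[
\frac1k\Big[n\log n - k\log k - (n-k)\log(n-k)\Big] + o(1).
\]
Rewriting $\log n = \log k + \log(n/k)$ and $\log(n-k) = \log k + \log(n/k - 1)$, the $\log k$ terms cancel because their coefficients sum to $n - k - (n-k) = 0$. The expression becomes
\[
\frac nk\log\frac nk - \Big(\frac nk - 1\Big)\log\Big(\frac nk-1\Big) + o(1).
\]
By continuity of $x\mapsto x\log x$ on $(0,\infty)$, this tends to $\alpha\log\alpha - (\alpha-1)\log(\alpha-1)$. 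Exponentiating gives the claim.

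\emph{Case $\alpha=1$.} Set $m = n-k\geq 0$, so that $m/k\to 0$. We avoid applying Stirling to $\Gamma(m+1)$ and instead bound directly. The lower bound is easy: $\log{n\choose k}\geq \log\Gamma(n+1) - \log\Gamma(k+1) - \log\Gamma(m+1)$. For the upper bound, note that $\log\Gamma$ is bounded below on $[1,\infty)$, so $-\log\Gamma(m+1)\leq C$ uniformly in $m\geq 0$.

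For the remaining difference $\log\Gamma(n+1) - \log\Gamma(k+1)$, we use that $\log\Gamma$ is convex, with derivative at most $\log(n+1)$ on $[k+1,n+1]$, since the digamma function satisfies $\psi(x)\leq\log x$. This gives
\[
0\leq \log\Gamma(n+1)-\log\Gamma(k+1)\leq m\log(n+1).
\]
Hence
\[
\Big|\frac1k\log{n\choose k}\Big| \leq \frac{m\log(n+1) + C + |\log\Gamma(m+1)|}{k}.
\]
For the last term, $|\log\Gamma(m+1)|\leq C + m\log(m+1)$. Both $m\log(n+1)/k$ and $m\log(m+1)/k$ must then be shown to vanish, and this is the main obstacle. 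The hypothesis only gives $m/k\to 0$, and this does \emph{not} control $m\log n/k$; for instance, $m = k/\sqrt{\log k}$ would make it blow up.

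To handle this we use the sharper entropy bound for binomial coefficients,
\[
\log{n\choose k}\leq n\,H(k/n) + O(1),
\]
where $H(p) = -p\log p - (1-p)\log(1-p)$. This holds for real parameters, via Stirling when $m$ is large and via the trivial bound when $m$ is bounded. Then
\[
\frac1k\log{n\choose k}\leq \frac nk H\Big(\frac kn\Big) + o(1) \to 0,
\]
since $H(p)\to 0$ as $p\to 1$. Equivalently, in the Case $\alpha>1$ computation, the quantity $\frac{m}{k}\log\frac{n}{m}$ replaces the crude $\frac mk\log n$ and tends to $0$. The lower bound $\log{n\choose k}\geq -C$, obtained from convexity as above, completes this case.
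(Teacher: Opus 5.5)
Your proposal is correct and takes the same route as the paper, whose proof says only that the result follows from Stirling's formula. Your separate treatment of $\alpha=1$, where $n_i-k_i$ need not tend to infinity and the crude bound $\frac{m}{k}\log n$ can fail, via the entropy form $nH(k/n)$, supplies exactly the detail that one-line proof leaves implicit. One small fix: the lower bound $\log\binom{n}{k}\geq 0$ does not follow from the bounds you list ``above'' when $m$ is large; it does follow from superadditivity of the convex function $x\mapsto\log\Gamma(x+1)$, which vanishes at $0$.
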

\begin{proof}
This follows directly from applying Stirling's formula.
\end{proof}

We will also need the following monotonicity formula:
\begin{proposition}\label{binom prop}
For any $\alpha > 1$, the function:
\[
k \mapsto {\alpha k\choose k}^{\frac{1}{k}}
\]
is strictly increasing on $(0,\infty)$.
\end{proposition}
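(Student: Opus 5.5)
We will show that $f(k) := \frac{1}{k}\log{\alpha k\choose k}$ is strictly increasing on $(0,\infty)$. Writing $\beta := \alpha - 1 > 0$, we have
\[
k f(k) = \log\Gamma(\alpha k + 1) - \log\Gamma(k+1) - \log\Gamma(\beta k + 1) =: g(k).
\]
Since $f(k) = g(k)/k$ and $g(0) = 0$, monotonicity of $f$ follows if $g$ is strictly convex on $[0,\infty)$: then $g(k)/k$, the slope of the secant from the origin, is strictly increasing. Equivalently, $f'(k) = \frac{k g'(k) - g(k)}{k^2}$, and $h(k) := k g'(k) - g(k)$ satisfies $h(0)=0$ and $h'(k) = k g''(k)$. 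So it suffices to prove $g''(k) > 0$ for $k > 0$.

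Let $\psi_1 = (\log\Gamma)''$ be the trigamma function. Then
\[
g''(k) = \alpha^2\psi_1(\alpha k+1) - \psi_1(k+1) - \beta^2\psi_1(\beta k+1).
\]
We use the integral representation $\psi_1(x+1) = \int_0^\infty \frac{t e^{-xt}}{e^t - 1}\,dt$ for $x > -1$. Substituting $t = s/c$ gives
\[
c^2\psi_1(ck+1) = \int_0^\infty \frac{s\,e^{-ks}}{c\,(e^{s/c}-1)}\,ds.
\]
Setting $\phi(c) := \frac{1}{c(e^{s/c}-1)}$ for fixed $s>0$, we obtain
\[
g''(k) = \int_0^\infty s e^{-ks}\big(\phi(\alpha) - \phi(1) - \phi(\beta)\big)\,ds,
\]
using $1^2\psi_1(k+1)$ for the middle term.

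Since $\alpha = 1 + \beta$, it suffices to show $\phi(1+\beta) > \phi(1) + \phi(\beta)$ pointwise, i.e.\ that $q(c) := \frac{1}{c(e^{1/c}-1)}$, after rescaling $s$, is strictly superadditive in $c$. Substitute $x = 1/c$, so $q = \frac{x}{e^x-1}$. The key fact is that $c\mapsto q(c)$, equivalently $\frac{1}{c}\cdot\frac{1}{e^{1/c}-1}$, satisfies $q(0^+) = 0$ and is strictly convex. A function that is strictly convex with $q(0)=0$ is strictly superadditive, since $q(c)/c$ is then strictly increasing. So we must check that $\eta(c) := q(c)/c = \frac{1}{c^2(e^{1/c}-1)}$ is strictly increasing in $c$. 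In the variable $x = 1/c$ this is $\frac{x^2}{e^x-1}$, which must be strictly decreasing in $x>0$. Its derivative has the sign of $2(e^x - 1) - x e^x$. This quantity vanishes at $0$, and its derivative $e^x(1-x)$ is positive and then negative. That alone does not settle the sign, which is a concern.

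To resolve it, recompute directly with $u = s/c$: each term is $\frac{s}{c}\cdot\frac{1}{e^{s/c}-1}\cdot\frac{1}{s}$, a function of $s/c$. Superadditivity in $c$ of $c \mapsto \frac{1}{c(e^{s/c}-1)}$ reduces to the function $F(c) = \frac{1}{c(e^{1/c}-1)}$ being increasing and convex with $F(0)=0$. Using $\frac{y}{e^y-1} = 1 - \frac{y}{2} + \frac{y^2}{12} - \dots$, we get $F(c) = c^{-2}\cdot\frac{y}{e^y-1}\big|_{y=1/c}$, hmm. The cleaner check is to expand $F(c) = \sum_{m\ge1} \frac{1}{c}e^{-m/c}$. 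Each summand $c^{-1}e^{-m/c}$ vanishes at $0$, and its second derivative is $e^{-m/c}c^{-5}(m^2 - 4mc + 2c^2)$, which is not sign-definite. So termwise convexity fails and a direct argument is required.

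The \textbf{main obstacle} is this elementary inequality $\phi(1+\beta) > \phi(1)+\phi(\beta)$, or equivalently positivity of $g''$. As a fallback we would verify $g''>0$ via the series $\psi_1(x+1) = \sum_{j\ge1}(x+j)^{-2}$, comparing terms after grouping. We expect this inequality to hold (it reflects log-concavity of binomial-type quantities), and proving it is where most of the effort will go.
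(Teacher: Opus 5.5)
The overall reduction is right and is the paper's: strict convexity of $g(k)=\log{\alpha k\choose k}$ with $g(0)=0$, and the trigamma formula for $g''$. The gap is an algebra error in the substitution. It leads you to an inequality that is false, and the proposal never establishes $g''>0$ (the series fallback is only a plan).

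From $\psi_1(ck+1)=\int_0^\infty \frac{t\,e^{-ckt}}{e^t-1}\,dt$ and $t=s/c$ you get $t\,dt=s\,ds/c^2$, so
\[
c^2\psi_1(ck+1)=\int_0^\infty \frac{s\,e^{-ks}}{e^{s/c}-1}\,ds,
\]
with no extra factor $1/c$. With your $\phi(c)=\frac{1}{c(e^{s/c}-1)}$, the pointwise inequality $\phi(1+\beta)>\phi(1)+\phi(\beta)$ actually fails. Expanding $\frac{1}{e^y-1}=\frac1y-\frac12+O(y)$ gives $\phi(c)=\frac1s-\frac{1}{2c}+O(s)$, so $\phi(\alpha)-\phi(1)-\phi(\beta)=-\frac1s+O(1)<0$ for small $s$. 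This explains the failures you ran into:
\begin{itemize}
\item Your sign check of $2(e^x-1)-xe^x$ could not close. That quantity is positive near $0$, so $\frac{x^2}{e^x-1}$ increases there and is not monotone.
\item The claimed convexity of $q$ is false. $q$ increases from $0$ to $1$, and a bounded increasing function on $(0,\infty)$ cannot be convex.
\end{itemize}

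The repair is short and is exactly the paper's argument. With $H(c):=\frac{1}{e^{s/c}-1}$ you need $H(1)+H(\beta)<H(1+\beta)$. Convexity is not required, only that $H(c)/c$ is strictly increasing. Writing $x=s/c$, we have $\frac{H(c)}{c}=\frac1s\cdot\frac{x}{e^x-1}$. The map $x\mapsto\frac{x}{e^x-1}$ is strictly decreasing on $(0,\infty)$: its derivative has numerator $e^x-1-xe^x$, which vanishes at $0$ and has derivative $-xe^x<0$. Hence
\[
H(1)+H(\beta)=1\cdot\frac{H(1)}{1}+\beta\cdot\frac{H(\beta)}{\beta}<(1+\beta)\,\frac{H(1+\beta)}{1+\beta}=H(\alpha).
\]
So the integrand $s\,e^{-ks}\bigl(H(\alpha)-H(1)-H(\beta)\bigr)$ is positive for every $s>0$, giving $g''(k)>0$ and completing your argument.
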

\begin{proof}
The result will follow from showing that:
\[
G(k) := \log {\alpha k \choose k}
\]
is strictly convex on $(0, \infty)$, since $\lim_{k\rightarrow 0} G(k) = 0$ and the secant slopes of a (strictly) convex function is (strictly) increasing; exponentiating will then prove the result.

Let $\psi_1(z) := \frac{d^2}{dz^2}\log\Gamma(z)$ denote the trigamma function, which admits the following integral representation, obtained by differentiating \cite[Eqn. (5.9.12)]{OLBC10},
\[
\psi_1(z) =\int_0^\infty \frac{t e^{-zt}}{1-e^{-t}}\,dt \qquad z>0.
\]
For $c, k >0$ and $z = ck + 1$, the change of variables $u=ct$ gives:
\begin{equation}\label{change var}
c^2\psi_1(ck+1) =\int_0^\infty \frac{u e^{-ku}}{e^{u/c}-1}\,du.
\end{equation}
A straightforward computation now gives:
\begin{align*}
G''(k)&=\alpha^2\psi_1(\alpha k+1) -\psi_1(k+1) -(\alpha-1)^2\psi_1((\alpha-1) k+1)\\
&=\int_0^\infty u e^{-ku}
\left[
\frac1{e^{u/\alpha}-1}
-\frac1{e^u-1}
-\frac1{e^{u/(\alpha-1)}-1}
\right]du,
\end{align*}
by using \eqref{change var}. It remains to show that the expression in square brackets is positive.

Fix $u>0$, and define $H(c):=\frac1{e^{u/c}-1}$ for $c > 0$. We will be done if we can show:
\[
H(a)+H(b) < H(a+b) \quad \text{ for } a,b>0.
\]
To see this, set $x = \frac{u}{c}$, so that:
\[
\frac{H(c)}{c} =\frac{1}{u}\,\frac{x}{e^x-1}.
\]
Observe then that $\frac{x}{e^x-1}$ is strictly decreasing on $(0,\infty)$; it follows that $\frac{H(c)}{c}$ is strictly increasing. Hence:
\begin{align*}
H(a) + H(b) &= a\left(\frac{H(a)}{a}\right) + b\left(\frac{H(b)}{b}\right)\\
&< a \left(\frac{H(a+b)}{a+b}\right) + b\left(\frac{H(a+b)}{a+b}\right)\\
&= H(a+b).\qedhere
\end{align*}

\end{proof}

We will also need to use the boundary form of Laplace's method \cite[Sec. 2.3(iii)]{OLBC10}; suppose that $\phi$ is a smooth, bounded function on a finite interval $(a-\e, b + \e) \subset \R$ and $(k_i)$ a sequence increasing to infinity. If $\phi$ achieves its unique maximum on $[a,b]$ at $b$, with $\phi'(b) = 0$ and $\phi''(b) < 0$, then:
\begin{equation}\label{boundary Laplace}
\int_a^b e^{k_i \phi(s)}\, ds = \frac{1}{2}\sqrt{\frac{2\pi}{k_i \abs{\phi''(b)}}}e^{k_i \phi(b)}\left(1 + O\left(k_i^{-1/2}\right)\right) .
\end{equation}

\section{Asymptotics for the first eigenvalue}

In this section, we prove Theorem \ref{limit theorem intro}; we first prove a generic lower bound, using a similar method to \cite[Eqn. (2.1)]{Le25}, and then an asymptotically sharp upper bound, assuming that $n$ and $k$ grow at a comparable rate.

\begin{proposition}\label{lower bound}
Let $1\leq k \leq n$ and set $\alpha = \frac{n}{k}$.
\begin{enumerate}[i)]
\item If $\alpha < 2$, then $\lambda_{k,n}^k \geq \frac{C_{k,n} (2-\alpha)^{k+1}}{n\cdot B\left(\frac{n}{2-\alpha}, k+1\right)}$.\\
\item If $\alpha = 2$, then $\lambda_{k,n}^k \geq \frac{C_{k,n} \cdot n^{k}}{\Gamma(k+1)}$.\\
\item If $\alpha > 2$, then $\lambda_{k,n}^k \geq \frac{C_{k,n} (\alpha-2)^{k+1}}{n\cdot B\left(\frac{2k}{\alpha-2}, k+1\right)}$.
\end{enumerate}
\end{proposition}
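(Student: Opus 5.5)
The plan is to bound the Rayleigh quotient \eqref{Rayleigh quotient formula} from below for an arbitrary admissible $u$, that is, $\alpha$-convex with $u(1)=0$ and $u(0)=-1$. The idea, as in \cite{Le25}, is to control $-u$ pointwise by the energy $E_{k,n}[u]$ via H\"older's inequality, and then integrate that bound against $r^{n-1}$ to control $I_{k,n}[u]$.

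\textbf{Step 1: a pointwise bound.} Since $u'\geq 0$ and $u(1)=0$, we have $-u(r)=\int_r^1 u'(s)\,ds$. Write
\[
u' = \bigl(s^{\frac{n-k}{k+1}}u'\bigr)\cdot s^{-\frac{n-k}{k+1}}
\]
and apply H\"older with exponents $k+1$ and $\frac{k+1}{k}$. Since $\frac{n-k}{k}=\alpha-1$, this gives
\[
(-u(r))^{k+1}\leq \left(\int_0^1 s^{n-k}(u')^{k+1}ds\right) g(r)^k,\qquad g(r):=\int_r^1 s^{1-\alpha}\,ds .
\]
By \eqref{C expression for E}, the first factor equals $\frac{n(k+1)}{C_{k,n}}E_{k,n}[u]$.

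\textbf{Step 2: integrate.} Multiplying by $\frac{r^{n-1}}{k+1}$ and integrating yields
\[
I_{k,n}[u]\leq \frac{n}{C_{k,n}}E_{k,n}[u]\int_0^1 g(r)^k r^{n-1}dr .
\]
Hence $\lambda_{k,n}^k\geq C_{k,n}\big/\bigl(n\int_0^1 g^k r^{n-1}dr\bigr)$, and it remains to compute $J:=\int_0^1 g^k r^{n-1}dr$ in the three cases.

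\textbf{Step 3: compute $J$.}
\begin{enumerate}[i)]
\item If $\alpha<2$, then $g=\frac{1-r^{2-\alpha}}{2-\alpha}$. The substitution $s=r^{2-\alpha}$ gives $r^{n-1}dr=\frac{1}{2-\alpha}s^{\frac{n}{2-\alpha}-1}ds$, so
\[
J=\frac{B\left(\frac{n}{2-\alpha},k+1\right)}{(2-\alpha)^{k+1}} .
\]
\item If $\alpha=2$, then $g=-\log r$. The substitution $r=e^{-t}$ gives $J=\int_0^\infty t^ke^{-nt}dt=\frac{\Gamma(k+1)}{n^{k+1}}$, and therefore $\lambda_{k,n}^k\geq \frac{C_{k,n}n^k}{\Gamma(k+1)}$.
\item If $\alpha>2$, then $g=\frac{r^{2-\alpha}-1}{\alpha-2}$. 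The substitution $s=r^{\alpha-2}$ gives $r^{2-\alpha}-1=\frac{1-s}{s}$ and $r^{n-1}dr=\frac{1}{\alpha-2}s^{\frac{n}{\alpha-2}-1}ds$. The integrand becomes $(1-s)^k s^{\frac{n}{\alpha-2}-k-1}$, and $\frac{n}{\alpha-2}-k=\frac{2k}{\alpha-2}$, so
\[
J=\frac{B\left(\frac{2k}{\alpha-2},k+1\right)}{(\alpha-2)^{k+1}} .
\]
\end{enumerate}
Taking the infimum over all admissible $u$ gives the three stated bounds.

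The only delicate point is justifying convergence near $r=0$ when $\alpha\geq 2$. There $g$ blows up, but $g^kr^{n-1}\sim r^{2k-1}$ (up to logarithms when $\alpha=2$), which is integrable. The finiteness of $E_{k,n}[u]$ for admissible $u$ comes from $\limsup_{r\to0} u'/r<\infty$. Everything else is routine computation.
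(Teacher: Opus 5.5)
Your proposal is correct and matches the paper's proof step for step: H\"older with exponents $k+1$ and $\frac{k+1}{k}$ bounds $-u(r)$ by $E_{k,n}[u]^{1/(k+1)}g(r)^{k/(k+1)}$, you integrate against $r^{n-1}$, and you evaluate the resulting integral with the substitutions $s=r^{2-\alpha}$, $r=e^{-t}$ and $s=r^{\alpha-2}$. The only difference is cosmetic. You bound the quotient for every admissible $u$ and take the infimum, while the paper plugs in the eigenfunction, which attains the infimum. (One small inaccuracy: the condition $\limsup_{r\to0}u'/r<\infty$ only controls $E_{k,n}[u]$ near $r=0$, not near $r=1$. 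This does not matter, since the bound is trivial when $E_{k,n}[u]=\infty$.)
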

\begin{proof}
We will omit subscripts in the proof for notational simplicity. From \eqref{Rayleigh quotient formula}, we have:
\begin{equation}\label{a formula}
\lambda^k = \frac{E[u]}{I[u]};
\end{equation}
it will thus suffice to estimate $I[u]$ from above by a multiple of $E[u]$.

Let $r \in (0, 1)$, and consider the two cases when $\alpha\not= 2$. Applying H\"older's inequality with conjugate exponents $k+1$ and $1 + \frac{1}{k}$, we estimate:
\begin{align*}
\abs{u(r)} &= \int_r^1 \left(t^{\frac{n-k}{k+1}} u'(t) \right)\cdot t^{-\frac{n-k}{k+1}}\, dt\\
& \leq \left(\int_r^1 t^{n-k} (u'(t))^{k+1}\, dt\right)^{\frac{1}{k+1}} \left(\int_r^1 t^{1 -\alpha}\, dt \right)^{\frac{k}{k+1}}\\
& \leq \left(\frac{n(k + 1)}{C}\right)^{\frac{1}{k+1}} E[u]^{\frac{1}{k + 1}} \left(\frac{1-r^{2-\alpha}}{2-\alpha} \right)^{\frac{k}{k+1}},
\end{align*}
where in the last line, we have used \eqref{C expression for E}. It follows that:
\begin{align}
I[u] &= \frac{1}{k+1}\int_0^1 r^{n-1} \abs{u}^{k+1}(r)\, dr \leq \frac{n E[u]}{C}\int_0^1 r^{n-1} \left(\frac{1-r^{2-\alpha}}{2-\alpha}\right)^kdr
\label{another formula}
\end{align}
This integral can be evaluated in terms of the beta function, depending on if we are in cases $i)$ or $iii)$:\\

{\noindent \bf Case $i)$:} $\alpha < 2$

Making the substitution $x = r^{2-\alpha}$ directly gives:
\begin{align*}
\int_0^1 r^{n-1} \left(\frac{1-r^{2-\alpha}}{2-\alpha}\right)^k\, dr &= (2-\alpha)^{-k-1} \int_0^1 x^{\frac{n}{2-\alpha}-1} (1-x)^k\, dx\\
& = (2-\alpha)^{-k-1} B\left(\frac{n}{2-\alpha}, k+1\right).
\end{align*}
Combining this with \eqref{a formula} and \eqref{another formula} finishes this case.\\

{\noindent \bf Case $iii)$:} $\alpha > 2$

Make now the substitution $x = r^{\alpha - 2}$ to get:
\begin{align*}
\int_0^1 r^{n-1} \left(\frac{r^{2-\alpha}-1}{\alpha-2}\right)^k\, dr &= (\alpha-2)^{-k-1} \int_0^1 x^{\frac{n}{\alpha-2} - 1}(x^{-1} - 1)^k\, dx\\
& = (\alpha-2)^{-k-1} \int_0^1 x^{\frac{2k}{\alpha-2} - 1}(1 - x)^k\, dx\\
&= (\alpha - 2)^{-k-1} B\left(\frac{2k}{\alpha  -2}, k + 1\right).
\end{align*}
Again, we finish by combining with \eqref{a formula} and \eqref{another formula}.\\

Finally, we repeat the above process for case $ii)$, when $\alpha = 2$. Applying H\"older's inequality as before, we have:
\[
\abs{u(r)} \leq \left(\frac{n(k+1)}{C}\right)^{\frac{1}{k+1}} E[u]^{\frac{1}{k + 1}} \left(-\log r\right)^{\frac{k}{k+1}}.
\]
which implies:
\[
I[u] \leq \frac{n}{C}\, E[u]\int_0^1 r^{n-1} (-\log r)^k\, dr.
\]
This integral can be evaluated in terms of the $\Gamma$ function by using the substitution $x = -n\log r$, giving us:
\[
\int_0^1 r^{n-1} (-\log r)^k\, dr = \frac{1}{n^{k+1}} \int_0^\infty x^k e^{-x}\, dx = \frac{\Gamma(k+1)}{n^{k+1}}.\qedhere
\]
\end{proof}

\medskip

Before continuing, we make some preliminary observations about $r_\alpha$ \eqref{r alpha and A alpha defn} and the limiting function $v_\alpha$ \eqref{v alpha defn}, \eqref{v 2 defn}.
\begin{lemma}
For each $\alpha \geq 1$, we have $v_\alpha \in C^2([0, 1])$. It is moreover $\alpha$-convex and satisfies $v_\alpha(0) = -1$ and $v_\alpha(1) = 0$.
\end{lemma}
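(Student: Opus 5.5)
The plan is a direct verification, piece by piece, followed by a check of the matching conditions at $r=r_\alpha$. I treat $\alpha\neq 2$ first and then $\alpha = 2$.

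\emph{Inner piece.} On $[0,r_\alpha]$ write $c := \alpha/r_\alpha^2$, so that $v_\alpha = -e^{-cr^2/2}$. Then
\[
v_\alpha' = cr e^{-cr^2/2} \geq 0, \qquad \frac{v_\alpha'}{r} = c e^{-cr^2/2},
\]
which is bounded as $r\to 0$. The boundary value is $v_\alpha(0) = -1$. For the convexity condition,
\[
v_\alpha'' = c e^{-cr^2/2}(1 - cr^2),
\]
so that
\[
v_\alpha'' + (\alpha-1)\frac{v_\alpha'}{r} = c e^{-cr^2/2}\,(\alpha - cr^2).
\]
This is nonnegative exactly when $r^2 \leq \alpha/c = r_\alpha^2$. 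Hence the choice of $c$ makes the $\alpha$-convexity expression vanish precisely at $r_\alpha$. This is the structural point of the construction.

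\emph{Outer piece.} On $[r_\alpha,1]$ we have $v_\alpha = A_\alpha(r^{2-\alpha}-1)$, so $v_\alpha(1)=0$. The function $r^{2-\alpha}$ is the radial $\alpha$-harmonic function: $(r^{2-\alpha})'' + (\alpha-1)(r^{2-\alpha})'/r = 0$. So the convexity expression vanishes identically here. Also $v_\alpha' = A_\alpha(2-\alpha)r^{1-\alpha} = 2e^{-\alpha/2} r^{1-\alpha} > 0$, since $A_\alpha(2-\alpha)=2e^{-\alpha/2}$ regardless of the sign of $2-\alpha$.

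\emph{Matching at $r_\alpha$.} This is the only step requiring care, and I expect it to reduce to the identity $r_\alpha^{2-\alpha} = \alpha/2$.
\begin{enumerate}[i)]
\item Values. The inner value is $-e^{-\alpha/2}$. The outer value is $A_\alpha(\alpha/2 - 1) = -e^{-\alpha/2}$.
\item First derivatives. The inner value is $c r_\alpha e^{-\alpha/2} = \alpha e^{-\alpha/2}/r_\alpha$. The outer value is $2e^{-\alpha/2} r_\alpha^{1-\alpha} = 2e^{-\alpha/2} r_\alpha^{2-\alpha}/r_\alpha = \alpha e^{-\alpha/2}/r_\alpha$.
\item Second derivatives. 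At $r_\alpha$ both pieces satisfy $v'' = -(\alpha-1)v'/r$: the inner one because its convexity expression vanishes there, and the outer one identically. Since the first derivatives already agree, the second derivatives agree too.
\end{enumerate}
Each piece is smooth on its closed interval, so $v_\alpha\in C^2([0,1])$. At the origin, $v_\alpha$ is smooth as a function of $r^2$. The limit condition on $v_\alpha'/r$ was checked above.

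\emph{The case $\alpha=2$.} Here $r_\alpha^2 = e^{-1}$ and $c = 2e$. The outer piece $2e^{-1}\log r$ is $2$-harmonic and vanishes at $1$. At $e^{-1/2}$:
\begin{enumerate}[i)]
\item the values are $-e^{-1}$ and $2e^{-1}\cdot(-\tfrac12)$, which agree;
\item the derivatives are $2e\cdot e^{-1/2}e^{-1}$ and $2e^{-1}e^{1/2}$, both equal to $2e^{-1/2}$;
\item the second derivatives agree by the same argument as in the general case.
\end{enumerate}
Alternatively, this case follows by letting $\alpha\to 2$ in the general formulas.
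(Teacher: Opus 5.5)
Your proposal is correct and follows essentially the same route as the paper. Both arguments compute piecewise that $v_\alpha''+(\alpha-1)v_\alpha'/r$ equals $\frac{\alpha^2(r_\alpha^2-r^2)}{r_\alpha^4}(-v_\alpha)\geq 0$ on the inner piece and vanishes on the outer piece, and both reduce the matching at $r_\alpha$ to $r_\alpha^{2-\alpha}=\alpha/2$. Your observation that the second derivatives agree because both pieces satisfy $v''=-(\alpha-1)v'/r$ at $r_\alpha$ makes the paper's ``taking limits'' step explicit. You implicitly use $0<r_\alpha<1$; this is easy, and the paper verifies it in its next lemma.
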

\begin{proof}
These properties are all straightforward to check, but we provide some computations for the reader's convenience. Start by observing that $v_\alpha$ is continuous: if $\alpha \not=2$, then
\[
v_\alpha (r_\alpha) = -e^{-\frac{\alpha}{2}} =\frac{2}{2-\alpha} \left(\frac{\alpha}{2}-1\right) e^{-\alpha/2} = A_\alpha( r_\alpha^{2-\alpha} - 1),
\]
while if $\alpha = 2$ we have $v_\alpha(r_\alpha) = -e^{-1} = A_\alpha \log r_\alpha$. That $v_\alpha(0) = -1$ and $v_\alpha(1) = 0$ are clear.

Computing the first and second derivatives of $v_\alpha$ for $r < r_\alpha$ now gives:
\begin{equation}\label{some v derivatives}
v_\alpha'(r) = -\frac{\alpha r}{r_\alpha^2} v_\alpha(r) \quad \quad v_\alpha''(r) = -\left(\frac{\alpha}{r_\alpha^2} - \frac{\alpha^2 r^2}{r_\alpha^4}\right) v_\alpha(r),
\end{equation}
which satisfy $v''_\alpha + (\alpha - 1)\frac{v'_\alpha}{r} = \frac{\alpha^2(r_\alpha^2- r^2)}{r_\alpha^4} (-v_\alpha(r))\geq 0$; if $\alpha\not =2$, then for $r_\alpha < r < 1$, we have
\[
v_\alpha'(r) = (2-\alpha) A_\alpha r^{1-\alpha} \quad\quad v_\alpha''(r) = (2-\alpha)(1-\alpha) A_\alpha r^{-\alpha}
\]
so that $v_\alpha'' + (\alpha - 1)\frac{v_\alpha'}{r} = 0$ here.
%
Taking limits now shows that $v_\alpha'$ and $v_\alpha''$ are continuous; in particular, $v_\alpha$ is $\alpha$-convex on all of $(0,1)$. The computations are similar for the case when $\alpha = 2$.
\end{proof}

\begin{lemma}\label{bad lemma}
For $\alpha \in [1, \infty)$ we have:
\begin{enumerate}[(a)]
\item the function $\alpha \mapsto r_\alpha$ is strictly increasing, with $\frac{1}{2} \leq r_\alpha < 1$; and
\item for each fixed $r\in (0, 1)$, $\alpha \mapsto v_\alpha(r)$ is strictly increasing.
\end{enumerate}
\end{lemma}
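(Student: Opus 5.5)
For part (a), I would substitute $t = \alpha/2 \in [\tfrac12, \infty)$. Then, for $\alpha \neq 2$,
\[
\log r_\alpha = \frac{\log t}{2 - 2t} = -\frac12 \cdot \frac{\log t}{t-1}.
\]
The function $t \mapsto \frac{\log t}{t-1}$ is the slope of the secant of the concave function $\log$ from $1$ to $t$. It is therefore strictly decreasing and positive on $(0,\infty)$, with value $1$ at $t=1$, which matches the convention $r_2 = e^{-1/2}$. Consequently $\alpha \mapsto \log r_\alpha$ is strictly increasing and negative, so $r_\alpha < 1$. The bound $r_\alpha \geq r_1 = \tfrac12$ follows from monotonicity and the direct evaluation $r_1 = (1/2)^{1} = \tfrac12$.

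For part (b), since $v_\alpha(0) = -1$ and $v_\alpha < 0$ on $[0,1)$, I would work with the logarithmic derivative $\rho_\alpha := v_\alpha'/(-v_\alpha)$. This gives
\[
\log(-v_\alpha(r)) = -\int_0^r \rho_\alpha(s)\,ds .
\]
It suffices to show that $\alpha \mapsto \rho_\alpha(s)$ is nondecreasing for each $s$, and strictly increasing on a set of positive measure. The two pieces of $\rho_\alpha$ are as follows.

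\emph{Inner piece.} For $s \leq r_\alpha$, \eqref{some v derivatives} gives $\rho_\alpha(s) = g_\alpha s$ with $g_\alpha := \alpha/r_\alpha^2$. In the variable $t$ we have $\log g_\alpha = \log 2 + h(t)$ with $h(t) = \frac{t\log t}{t-1}$, and
\[
h'(t) = \frac{t-1-\log t}{(t-1)^2} > 0 .
\]
So $g_\alpha$ is strictly increasing; it goes from $g_1 = 4$ to $g_2 = 2e$ and beyond.

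\emph{Outer piece.} For $s > r_\alpha$, a direct computation gives $\rho_\alpha(s) = \frac{1}{s L_\alpha(s)}$, where
\[
L_\alpha(s) := \frac{s^{\alpha-2} - 1}{2-\alpha} = \int_s^1 \Big(\frac{s}{\tau}\Big)^{\alpha-1}\frac{d\tau}{s}
\]
(interpreted as $-\log s$ when $\alpha = 2$). Since $s/\tau < 1$, the integrand is strictly decreasing in $\alpha$. Hence $L_\alpha(s)$ decreases and $\rho_\alpha(s)$ increases in $\alpha$.

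I also note a tangency relation. The function $s \mapsto g_\alpha s^2 L_\alpha(s) = g_\alpha \frac{s^\alpha - s^2}{2-\alpha}$ has derivative with the sign of $\alpha s^{\alpha-2} - 2$ (after dividing by $2-\alpha$). It is therefore maximized exactly at $s = r_\alpha$, where it equals $1$ by continuity of $v_\alpha'$. Thus
\[
g_\alpha s \leq \frac{1}{s L_\alpha(s)} \quad \text{for all } s,
\]
with equality only at $r_\alpha$.

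The main obstacle is the region where the two pieces are mismatched: for $\alpha < \beta$ and $s \in (r_\alpha, r_\beta]$ (using part (a)), I need $\rho_\alpha(s) = \frac{1}{sL_\alpha(s)} \leq g_\beta s = \rho_\beta(s)$. The tangency inequality goes the wrong way for a direct pointwise comparison. There, I would first try to avoid pointwise comparison of $\rho$ altogether. Instead I would compare integrated quantities: on $[r_\alpha, r]$ I would compare $\log(-v_\alpha)$, which equals $\log\big(A_\alpha(1 - s^{2-\alpha})\big)$, with $\log(-v_\beta(s)) = -g_\beta s^2/2$. I would argue by viewing both as functions of $\beta$. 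For fixed $s$, the map $\beta \mapsto \log(-v_\beta(s))$ is continuous, since the pieces match at $s = r_\beta$. It is also piecewise differentiable in $\beta$, and its $\beta$-derivative on each regime has a sign given by the two monotonicity facts above (differentiating $-\int_0^s \rho_\beta$ under the integral, with no boundary term at the moving breakpoint because $\rho_\beta$ is continuous there). A continuous function of $\beta$ with negative derivative on each of finitely many pieces is strictly decreasing. This gives strict increase of $v_\beta(r)$ in $\beta$ and sidesteps the mismatched interval. The $\alpha = 2$ case would be handled by continuity in $\alpha$.
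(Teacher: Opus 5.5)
Your argument is correct. It differs from the paper's route in both parts.

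\textbf{Part (a).} The paper computes $\frac{dr_\alpha}{d\alpha}$ and checks that $\alpha\log(\alpha/2)+2-\alpha>0$ by convexity. Your secant-slope argument for the concave function $\log$ proves the same monotonicity, and it gives $r_\alpha<1$ immediately.

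\textbf{Part (b).} The paper differentiates $f(\alpha)=v_\alpha(r)$ directly.
\begin{itemize}
\item In the inner regime it reduces to the sign of $\alpha-2-2\log(\alpha/2)$, which is your $g_\alpha'>0$.
\item In the outer regime it computes $\partial_\alpha[A_\alpha(r^{2-\alpha}-1)]$ explicitly. It then studies $g(r)=(\alpha-2(2-\alpha)\log r)r^{2-\alpha}-\alpha$ on $[r_\alpha,1]$, which needs a case split $\alpha<2$ versus $\alpha>2$ and a separate check that $g(r_\alpha)>0$.
\item The two regimes are glued by noting that $f$ is $C^1$ in $\alpha$.
\end{itemize}
Your logarithmic formulation avoids the case analysis. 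In the outer regime your Leibniz computation reads
\[
\partial_\beta \log(-v_\beta(r)) = -\tfrac12\, g_\beta'\, r_\beta^2 - \int_{r_\beta}^r \partial_\beta\rho_\beta(s)\,ds ,
\]
which is visibly negative. The cancelled boundary term is exactly the $C^1$ matching $g_\beta r_\beta^2 L_\beta(r_\beta)=1$. Your integral formula for $L_\beta$ also covers $\alpha=2$ with no separate treatment. The paper's route is pure mechanical calculus; yours needs a bit more setup but shows where the sign comes from.

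\textbf{The ``obstacle'' is not real.} The difficulty you flag on $(r_\alpha,r_\beta]$ does not arise, so your original pointwise plan already works. For $s$ in that interval, part (a) and continuity of $\alpha\mapsto r_\alpha$ give $\alpha_s\in(\alpha,\beta]$ with $r_{\alpha_s}=s$. The equality case of your tangency relation then yields
\[
\rho_\alpha(s)=\frac{1}{sL_\alpha(s)}<\frac{1}{sL_{\alpha_s}(s)}=g_{\alpha_s}s\leq g_\beta s=\rho_\beta(s).
\]
Hence $\rho_\alpha<\rho_\beta$ on all of $(0,1)$. This ordering of logarithmic derivatives is stronger than (b) and gives (b) after integrating. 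Your $\beta$-derivative argument is the infinitesimal version of this comparison.
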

\begin{proof}
To show (a), it will suffice to show that $\frac{d r_\alpha}{d\alpha} > 0$ for $\alpha \not=2$. Using \eqref{r alpha and A alpha defn}, compute:
\[
\frac{d r_\alpha}{d \alpha} = \left( \alpha \log\left(\frac{\alpha}{2}\right) + 2-\alpha\right) \frac{r_\alpha}{\alpha(2-\alpha)^2};
\]
one then checks that $\alpha\log\left(\frac{\alpha}{2}\right) + 2-\alpha$ is convex with a minimum at $\alpha = 2$, and hence positive.

We now show (b); fix $r$ and define $f(\alpha) := v_\alpha(r)$. We have several cases, depending on the values of $r$ and $\alpha$. First, if $r \leq \frac{1}{2}$, then for any $\alpha$ we have:
\[
f(\alpha) = -e^{-\frac{\alpha r^2}{2r_\alpha^2}},
\]
so that, for $\alpha\not=2$, we have
\begin{align*}
\frac{d f}{d\alpha} &= \frac{\alpha - 2 - 2 \log \left(\frac{\alpha}{2}\right)}{(2 - \alpha)^2}\, \frac{\alpha r^2}{2r_\alpha^2}e^{-\frac{\alpha r^2}{2r_\alpha^2}}.
\end{align*}
One again checks that $\alpha - 2 - 2\log\left(\frac{\alpha}{2}\right)$ is convex with a minimum at $\alpha = 2$, and thus positive.

If $r > \frac{1}{2}$, then $f(\alpha)$ is the piecewise function:
\[
f(\alpha) = \begin{cases} A_\alpha(r^{2-\alpha} - 1) &\text{ for $\alpha$ with $r_\alpha \leq r$} \\ -e^{-\frac{\alpha r^2}{2r_\alpha^2}} &\text{ for $\alpha$ with $r_\alpha > r$.}  \end{cases}
\]
One can check that $f$ is $C^1$, so it suffices to show that it has positive derivative for $\alpha$ with $r_\alpha\not =r$ and $\alpha\not=2$. The case when $r_\alpha > r$ follows from above, so we are left to consider when $r_\alpha \leq r$. Then we have that:
\begin{align*}
\frac{d}{d\alpha}\left( A_\alpha (r^{2-\alpha} - 1)\right) 
&= A_\alpha \left[ r^{2-\alpha}\left(\frac{\alpha}{2(2-\alpha)}  - \log r \right) - \frac{\alpha}{2(2-\alpha)}\right] \\
&= \frac{e^{-\frac{\alpha}{2}}}{(2-\alpha)^2} \left((\alpha - 2 (2-\alpha) \log r)r^{2-\alpha} - \alpha\right).
\end{align*}
Consider $g(r) := (\alpha - 2(2-\alpha)\log r)r^{2-\alpha} - \alpha$ as a function of $r$ on $\left[r_\alpha, 1\right]$. Then:
\[
\frac{dg}{dr} = -(2-\alpha)^2 (1 + 2\log r)r^{1-\alpha}.
\]
If $\alpha > 2$, then $\frac{dg}{dr} < 0$ on all of $[r_\alpha, 1]$, so that $g(r) > g(1) = 0$. If $\alpha < 2$, then $\frac{dg}{dr} \geq 0$ on exactly $[r_\alpha, e^{-\frac{1}{2}}]$, so we also need to check the value of $g(r_\alpha)$:
\[
g(r_\alpha) =  \left(\alpha - 2 \log \frac{\alpha}{2}\right)\left(\frac{\alpha}{2}\right) - \alpha > 0.\qedhere
\]
%
\end{proof}

We now derive asymptotic bounds for $E_{k,n}[v_\alpha]$ and $I_{k,n}[v_\alpha]$:
\begin{proposition}\label{Laplace prop}
Suppose that $(n_i), (k_i)$, $1 \leq k_i \leq n_i$, are sequences such that $\lim_{i\rightarrow\infty} n_i = \infty$ and also:
\[
\lim_{i\rightarrow\infty} \frac{n_i}{k_i} = \alpha < \infty.
\]
Set $\alpha_i := \frac{n_i}{k_i}$. Then for sufficiently small $\e > 0$ and large $i$, we have:
\[
\frac{n_i(k_i+1)}{C_{k_i, n_i}}E_{k_i, n_i}[v_{\alpha_i}] \leq \left(\frac{1+\e}{1-\e}\right)^{n_i+3} \frac{\alpha_i \sqrt{\pi} e^{-1/2}}{\sqrt{n_i + 1}}\, 2^{k_i} e^{-\frac{\alpha_i k_i}{2}} + \left(\frac{2\e}{(1-\e)r_\alpha^{\alpha_i}}\right)^{k_i},
\]
and
\[
(k_i + 1)I_{k_i,n_i}[v_{\alpha_i}] \geq  \frac{(1-\e)\sqrt{\pi} }{\sqrt{\alpha(1+\e)} \sqrt{k_i+1}}\ r_{(1+\e)\alpha}^{(1+\e)\alpha(k_i + 1) + 1}e^{-\frac{(1+\e)\alpha(k_i+1)}{2}}.
\]
\end{proposition}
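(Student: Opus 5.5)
The plan is to work with the two quantities separately. We use the integrated form \eqref{C expression for E} for $E$ and the definition for $I$, and in each case reduce to a one-dimensional Laplace-type integral over $[0,r_{\alpha_i}]$ or $[0,r_{(1+\e)\alpha}]$, to which the boundary Laplace formula \eqref{boundary Laplace} applies. Throughout, $\e>0$ absorbs the discrepancies between $\alpha_i$ and $\alpha$, and between $n_i$, $n_i+1$, $(k_i+1)\alpha_i$; for $i$ large we have $|\alpha_i-\alpha|<\e\alpha$. Only the exponential rates (after taking $k_i$-th roots) will matter for Theorem \ref{limit theorem intro}, so polynomial prefactors can be handled generously.

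\emph{Upper bound for $E$.} By \eqref{C expression for E},
\[
\frac{n_i(k_i+1)}{C_{k_i,n_i}}E_{k_i,n_i}[v_{\alpha_i}]=\int_0^1 r^{n_i-k_i}(v_{\alpha_i}')^{k_i+1}\,dr,
\]
and I split this integral at $r_{\alpha_i}$.

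On $[0,r_{\alpha_i}]$, use \eqref{some v derivatives}, namely $v'=\frac{\alpha r}{r_\alpha^2}e^{-\alpha r^2/(2r_\alpha^2)}$. After the substitution $s=r/r_{\alpha_i}$, the identity $r_\alpha^{2-\alpha}=\alpha/2$ turns the powers of $r_{\alpha_i}$ into $(2/\alpha_i)^{k_i}$. The integral becomes
\[
\alpha_i\,2^{k_i}\int_0^1 s^{n_i+1}e^{-(k_i+1)\alpha_i s^2/2}\,ds.
\]
The phase $(n_i+1)\log s-(k_i+1)\alpha_i s^2/2$ has its critical point at $s^2=\frac{n_i+1}{n_i+\alpha_i}$, which lies within $O(\e)$ of $1$. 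I therefore compare it with the phase $\phi(s)=\log s-s^2/2$, which is maximized at the endpoint $s=1$ with $\phi''(1)=-2$. To do this, I bound $s^{n_i+1}e^{-(k_i+1)\alpha_i s^2/2}$ above by a version with $(1\pm\e)$ rescalings, valid after restricting to $s\in[1-\e,1+\e]$ where needed. This produces the factor $\left(\frac{1+\e}{1-\e}\right)^{n_i+3}$, and \eqref{boundary Laplace} then gives
\[
\frac{\sqrt{\pi}}{\sqrt{n_i+1}}\,e^{-1/2}\,e^{-\alpha_i k_i/2}.
\]

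The region $[r_{\alpha_i},1]$, together with the leftover region produced by the $\e$-splitting, goes into the second, "error" term. This is bounded crudely by the supremum of the integrand times the length of the interval, using the explicit form $v'=2e^{-\alpha/2}r^{1-\alpha}$ and $r\ge r_{\alpha_i}$.

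\emph{Lower bound for $I$.} Set $\beta=(1+\e)\alpha\ge\alpha_i$. By Lemma \ref{bad lemma}(b) we have $v_{\alpha_i}\le v_\beta<0$ pointwise, so $(-v_{\alpha_i})^{k_i+1}\ge(-v_\beta)^{k_i+1}$. Discarding $[r_\beta,1]$, it follows that
\[
(k_i+1)I\ge\int_0^{r_\beta}r^{n_i-1}e^{-(k_i+1)\beta r^2/(2r_\beta^2)}\,dr.
\]
Since $r<1$ and $n_i-1\le\beta(k_i+1)$, we may replace $r^{n_i-1}$ by the smaller quantity $r^{\beta(k_i+1)}$. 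Substituting $s=r/r_\beta$ yields
\[
r_\beta^{\beta(k_i+1)+1}\int_0^1 e^{(k_i+1)\beta(\log s-s^2/2)}\,ds,
\]
where the phase again has its endpoint maximum at $s=1$ with second derivative $-2\beta$. Applying \eqref{boundary Laplace}, and absorbing the $1+O(k_i^{-1/2})$ error into the factor $(1-\e)$, gives exactly the claimed bound $\frac{(1-\e)\sqrt\pi}{\sqrt{\alpha(1+\e)}\sqrt{k_i+1}}\,r_\beta^{\beta(k_i+1)+1}e^{-\beta(k_i+1)/2}$.

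\emph{Main obstacle.} I expect the hardest step to be the bookkeeping in the $E$ bound. The issue is that $v_{\alpha_i}$ changes with $i$, and its inner Laplace maximum sits slightly inside $[0,1]$ rather than at the endpoint. So the monotone $(1\pm\e)$ comparison has to be arranged so that the boundary form of Laplace's method still applies uniformly in $i$. The outer piece $[r_{\alpha_i},1]$ also needs care: its integrand is of the same exponential order $2^{k_i}e^{-\alpha_i k_i/2}$ as the main term. I would check that, after taking $k_i$-th roots, the bound on it matches the inner term's rate, or is absorbed by choosing $\e$ appropriately. If necessary, I would adjust the split point to $(1\pm\e)r_{\alpha_i}$ so that the crude estimate produces precisely the displayed $\e$-term.
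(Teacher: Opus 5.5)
Your overall route matches the paper's: split the energy at $r_{\alpha_i}$ and apply boundary Laplace to the inner piece, and for $I$ pass to $\beta=(1+\e)\alpha$ via Lemma \ref{bad lemma}(b). Your rescaled inner piece $\alpha_i 2^{k_i}\int_0^1 s^{n_i+1}e^{-(k_i+1)\alpha_i s^2/2}\,ds$ is also correct. Two steps fail as written, however. First, the outer piece of $E$ cannot go into the error term. On $[r_{\alpha_i},1]$ the integrand is exactly $(2e^{-\alpha_i/2})^{k_i+1}r^{1-\alpha_i}$, and $\int_{r_{\alpha_i}}^1 r^{1-\alpha_i}\,dr=\frac{1-r_{\alpha_i}^{2-\alpha_i}}{2-\alpha_i}=\frac12$. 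So this piece equals $2^{k_i}e^{-\alpha_i(k_i+1)/2}$. For large $i$ that is exponentially larger than $\bigl(\frac{2\e}{(1-\e)r_\alpha^{\alpha_i}}\bigr)^{k_i}$ whenever $\frac{\e}{1-\e}<r_\alpha^{\alpha}e^{-\alpha/2}$, which is exactly the small-$\e$ regime of the statement. In the paper, that error term comes from the cut-off region near $r=0$, where $\log r$ is singular. The paper instead absorbs the outer piece into the first term. That term's prefactor $\bigl(\frac{1+\e}{1-\e}\bigr)^{n_i+3}/\sqrt{n_i+1}\to\infty$, so it dominates any fixed multiple of $2^{k_i}e^{-\alpha_i k_i/2}$ for large $i$. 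This also removes your stated obstacle about the moving interior maximum. Since $(k_i+1)\alpha_i=n_i+\alpha_i$ and $\alpha_i\geq 1$, the inner integrand equals $e^{(n_i+1)(\log s-s^2/2)}e^{-(\alpha_i-1)s^2/2}\leq e^{(n_i+1)(\log s-s^2/2)}$. This is a fixed phase with its maximum at the endpoint $s=1$, so the inner piece is $O\bigl(n_i^{-1/2}2^{k_i}e^{-\alpha_i k_i/2}\bigr)$ with no $(1\pm\e)$-rescaling needed.

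Second, your bound for $I$ is short by a factor of $2$, so it is not ``exactly the claimed bound.'' Once you discard $[r_\beta,1]$, you have a one-sided maximum at $s=1$ with $|\phi''(1)|=2\beta$. Then \eqref{boundary Laplace} gives asymptotically $\frac{\sqrt\pi}{2\sqrt{\beta(k_i+1)}}\,r_\beta^{\beta(k_i+1)+1}e^{-\beta(k_i+1)/2}$. The statement instead has $\frac{(1-\e)\sqrt\pi}{\sqrt{\beta(k_i+1)}}$, since $\sqrt{\alpha(1+\e)}=\sqrt\beta$, and the $\frac12$ cannot be absorbed into $1-\e$. The paper keeps the outer interval. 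The phase $\psi_\e=\beta\log r+\log|v_\beta|$ has an interior maximum at $r_\beta$, and it is $C^2$ there with $\psi_\e''(r_\beta)=-2\beta/r_\beta^2$ from both sides because $v_\beta\in C^2$. Applying the boundary formula on $(\e,r_\beta)$ and on $(r_\beta,1-\e)$ and adding gives the full Gaussian constant. Alternatively, you can stay on $[0,r_\beta]$ and recover the factor from slack you threw away. For $r\leq r_\beta$ we have $r^{n_i-1}=r^{-m_i}r^{\beta(k_i+1)}\geq r_\beta^{-m_i}r^{\beta(k_i+1)}$, where $m_i=\beta(k_i+1)-n_i+1\geq\frac{\e\alpha}{2}k_i$ for large $i$. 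Since $r_\beta<1$, eventually $r_\beta^{-m_i}\geq 2$.
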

\begin{proof}

We will use Laplace's method to estimate each of these integrals. For notational convenience, set $v_{\alpha_i} =: v_i$, $E_{k_i, n_i} =: E_i$, etc. As the computations for $\alpha_i = 2$ are similar, we will also assume that $\alpha_i \not= 2$. Use \eqref{C expression for E} to write:
\begin{align*}
\frac{n_i (k_i+1)}{C_i} E_i[v_i] &= \int_0^1 r^{n_i-k_i} (v_i')^{k_i+1}\, dr\\
& = \left(\frac{\alpha_i}{r_i^2}\right)^{k_i+1}\int_0^{r_i} r^{n_i + 1} e^{-\frac{\alpha_i (k_i+1)}{2r_i^2} r^2} \, dr + \left(2 e^{-\alpha_i/2}\right)^{k_i+1}\int_{r_i}^1 r^{1 - \alpha_i}\, dr.
\end{align*} 
The second integral can be evaluated using the definition of $r_i$:
\[
\int_{r_i}^1 r^{1 - \alpha_i}\, dr = \frac{1 - r_i^{2-\alpha_i}}{2-\alpha_i} = \frac{1}{2}.
\]
 giving us:
\begin{equation}\label{dominating term}
\left(2 e^{-\alpha_i/2}\right)^{k_i+1}\int_{r_i}^1 r^{1 - \alpha_i}\, dr = 2^{k_i} e^{-\frac{\alpha_i(k_i + 1)}{2}}.
\end{equation}
To estimate the first integral, define
\[
\phi_\e(r) := \log r  - \frac{r^2}{2 (1+\e)^2 r_\alpha^2};
\]
observe that, for $i$ sufficiently large, we have:
\[
\int_0^{r_i} r^{n_i + 1} e^{-\frac{\alpha_i (k_i+1)}{2r_i^2} r^2} \, dr = \int_0^{r_i} e^{(n_i + 1)\log r -\frac{n_i+\alpha_i}{2r_i^2} r^2} \, dr  \leq  \int_0^{(1+\e)r_\alpha} e^{(n_i+1) \phi_\e(r)}\, dr.
\]
The function $\phi_\e(r)$ is smooth and strictly increasing for $r \in (0, (1+\e)r_\alpha)$, and at $r = (1+\e)r_\alpha$ satisfies
\begin{align*}
\phi_\e = \log&((1+\e) r_\alpha)-\frac{1}{2},\ \phi'_\e = 0, \text{ and } \phi''_\e = -\frac{2}{(1+\e)^2r_\alpha^2};
\end{align*}
for $\e$ sufficiently small, the boundary form of Laplace's method \eqref{boundary Laplace} then implies:
\[
\int_\e^{(1+\e)r_\alpha} e^{(n_i+1) \phi(r)}\, dr = \frac{ \sqrt{\pi}\ ((1+\e)r_\alpha)^{n_i+2}}{2\sqrt{n_i + 1}} e^{-\frac{n_i+1}{2}}\left(1 + O\left(n_i^{-1/2}\right)\right)
\]
Since $\frac{\alpha_i}{r_i^2} = \frac{2}{r_i^{\alpha_i}}$, this bounds the first integral for $i$ sufficiently large as:
\begin{align*}
\left(\frac{\alpha_i}{r_i^2}\right)^{k_i+1}\int_0^{r_i} r^{n_i + 1} e^{-\frac{\alpha_i (k_i+1)}{2r_i^2} r^2} \, dr\ \leq \left(\frac{\alpha_i}{r_i^2}\right)^{k_i+1}\left[ \e^{n_i+1} + \int_\e^{r_i} e^{(n_i+1)\phi_\e(r)}\, dr\right] &\\
\leq \frac{2^{k_i+1}}{r_i^{n_i+\alpha_i}}\left[ \e^{n_i + 1} + \frac{\sqrt{\pi}(1+\e)^{n_i+3} }{2\sqrt{n_i + 1}}r_\alpha^{n_i+2}e^{-\frac{n_i + 1}{2}}\right] &\\
\leq \left(\frac{2 \e}{(1-\e)r_\alpha^{\alpha_i}}\right)^{k_i + 1} + \left(\frac{1+\e}{1-\e}\right)^{n_i+3} \frac{\sqrt{\pi} }{\sqrt{n_i + 1}} \frac{\alpha_i}{2}\,\, 2^{k_i} e^{-\frac{n_i + 1}{2}}&.
\end{align*}
Adding this to \eqref{dominating term} and possibly shrinking $\e$ further gives:
\begin{align*}
\frac{n_i(k_i+1)}{C_i} E[v_i] \leq \left[\left(\frac{1+\e}{1-\e}\right)^{n_i+3} \frac{\sqrt{\pi} }{\sqrt{n_i + 1}} \frac{\alpha_i e^{\frac{\alpha_i - 1}{2}}}{2} + 1\right]\, 2^{k_i} e^{-\frac{\alpha_i(k_i + 1)}{2}} + \left(\frac{2\e}{(1-\e)r_\alpha^{\alpha_i}}\right)^{k_i},
\end{align*}
from which we deduce the desired upperbound for the energy for all large $i$.

\medskip

We now estimate $I_i[v_i]$, using again Laplace's method. Set
\[
\psi_\e(r) := (1+\e)\alpha\log r + \log \abs{v_{(1+\e)\alpha}}.
\]
By Lemma \ref{bad lemma}, we have:
\[
\alpha_i \log r + \log \abs{v_i} \geq \psi_\e(r)
\]
for all $i$ sufficiently large. 
It follows that:
\[
(k_i+1)I_i[v_i] = \int_0^1 r^{n_i-1}\abs{v_i}^{k_i+1}\, dr \geq \int_0^1 e^{(k_i+1)\left( \alpha_i \log r + \log \abs{v_i}\right)}\, dr \geq \int_0^1 e^{(k_i+1)\psi_\e(r)}\, dr,
\]
using $\frac{n_i - 1}{k_i + 1} \leq \alpha_i$. We claim that the unique maximum of $\psi_\e$ occurs at $r = r_{(1+\e)\alpha}$ -- for convenience, in the computations below, we set $\beta_\e := (1+\e) \alpha$, $s_\e := r_{\beta_\e}$, and $w_\e := v_{\beta_\e}$. To see the claim, compute:
\[
\psi'_\e(r) = \frac{\beta_\e}{r} + \frac{w_{\e}'(r)}{w_{\e}(r)}.
\]
For $r \leq s_{\e}$, recall from \eqref{some v derivatives} that:
\[
w'_\e(r) = -\frac{\beta_\e r}{s_\e^2} w_\e(r) \implies \psi'_\e(r) = \beta_\e r \left(\frac{1}{r^2} - \frac{1}{s_\e^2}\right) \geq 0,
\]
with equality only at $r = s_\e$. For $s_\e < r < 1$ and $\beta_\e\not= 2$ (which we can assume by shrinking $\e$ slightly if necessary), we have $w_\e(r) = A_{\beta_\e} (r^{2-\beta_\e}-1)$ so that:
\[
\psi'_\e(r) = \frac{\beta_\e}{r} + \frac{(2-\beta_\e) r^{1-\beta_\e}}{r^{2-\beta_\e} - 1} = \frac{2 r^{2-\beta_\e} - \beta_\e}{r (r^{2-\beta_\e} - 1)} < 0,
\]
since $s_\e = \left(\frac{\beta_\e}{2}\right)^{\frac{1}{2-\beta_\e}} < r < 1$. The claim is shown. 

We now have:
\[
\psi_\e(s_\e) = \beta_\e \log s_\e - \frac{\beta_\e}{2},
\]
and
\[
\psi''_\e(s_\e) = -\frac{\beta_\e}{s_\e^2} + \frac{w_\e'' w_\e - (w_\e')^2}{w_\e^2}(s_\e) = -\frac{\beta_\e}{s_\e^2}  - \frac{2}{s_\e^{\beta_\e}} = -\frac{2\beta_\e}{s_\e^{2}}.
\]
Since the restriction of $\psi_\e$ to both $(\e, s_\e)$ and $(s_\e, 1-\e)$ admit smooth extensions past the endpoints, we can apply Laplace's method \eqref{boundary Laplace} to both halves to get:
\begin{align*}
(k_i+1) I_i[v_i] &\geq \int_\e^{s_\e} e^{(k_i+1)\psi_\e(r)}\, dr + \int_{s_\e}^{1-\e} e^{(k_i+1)\psi_\e(r)}\, dr\\
&\geq  \frac{(1-\e)\sqrt{\pi}}{\sqrt{\beta_\e (k_i+1)}}s_\e^{\beta_\e(k_i + 1) + 1} e^{-\frac{\beta_\e(k_i+1)}{2}},
\end{align*}
as desired.
\end{proof}

We now prove Theorem \ref{limit theorem intro}:
\begin{proof}[Proof of Theorem \ref{limit theorem intro}]
Set $\alpha_i := \frac{n_i}{k_i}$ and fix $\e \in \left(0,\frac{1}{2}\right)$ small. From the Rayleigh quotient formula \eqref{Rayleigh quotient formula}, we have:
\[
\lambda_{k_i, n_i}^{k_i} \leq \frac{E_{k_i,n_i}[v_{\alpha_i}]}{I_{k_i,n_i}[v_{\alpha_i}]}.
\]
Applying Proposition \ref{Laplace prop}, using the inequality $a^k + b^k \leq (a+b)^k$ for $a,b > 0$ and $k \geq 1$, we then have:
\begin{align*}
\lambda_{k_i, n_i} &\leq \left(\frac{C_{k_i,n_i}}{n_i}\right)^{\frac{1}{k_i}} \left( \left(\frac{1+\e}{1-\e}\right)^{4} \frac{\alpha_i \sqrt{\alpha(k_i+1)}}{\sqrt{n_i + 1}}\frac{e^{\frac{(1+\e)\alpha-1}{2}}}{r_{(1+\e)\alpha}^{(1+\e)\alpha + 1}}\right)^{\frac{1}{k_i}}\left(\frac{1+\e}{1-\e}\right)^{\alpha_i}\frac{2 e^{-\frac{\alpha_i}{2}}}{r_{(1+\e)\alpha}^{(1+\e)\alpha}  e^{-\frac{(1+\e)\alpha}{2}} }\\
&+ \left(\frac{C_{k_i,n_i}}{n_i}\right)^{\frac{1}{k_i}} \left(\frac{\sqrt{\alpha(1+\e)} \sqrt{k_i+1} }{(1-\e)\sqrt{\pi} } \frac{e^{\frac{(1+\e)\alpha}{2}}}{r_{(1+\e)\alpha}^{(1+\e)\alpha + 1}}\right)^{\frac{1}{k_i}} \frac{e^{\frac{(1+\e)\alpha}{2}}}{r_{(1+\e)\alpha}^{(1+\e)\alpha}}\, \frac{2\e}{(1-\e)r_\alpha^{\alpha_i}}.
\end{align*}
Take the limit as $i\rightarrow\infty$: by Proposition \ref{binom coeff prop}, the factor $\left(\frac{C_{k_i,n_i}}{n_i}\right)^{\frac{1}{k_i}} \rightarrow \frac{\alpha^\alpha}{(\alpha-1)^{\alpha-1}}$, while the large middle factors in both terms tend to 1. Thus:
\[
\limsup_{i\rightarrow\infty} \lambda_{k_i, n_i} \leq \frac{\alpha^{\alpha}}{(\alpha - 1)^{\alpha -1}}\left(\frac{1+\e}{1-\e}\right)^{\alpha} \frac{2}{r_{(1+\e)\alpha}^{(1+\e)\alpha}}  e^{\frac{(1 +\e)\alpha}{2} -\frac{\alpha}{2}} + K\e,
\]
for a finite $K$, independent of $\e$. Letting $\e\rightarrow 0$ now gives the upper bound:
\[
\limsup_{i\rightarrow\infty} \lambda_{k_i, n_i} \leq \frac{\alpha^{\alpha}}{(\alpha - 1)^{\alpha -1}} \frac{2}{r_{\alpha}^{\alpha}} = \frac{\alpha^{\alpha+1}}{(\alpha-1)^{\alpha-1}} \left(\frac{2}{\alpha}\right)^{\frac{2}{2-\alpha}}.
\]

For the lower bound, we use Proposition \ref{lower bound}, using Stirling's formula in the standard way to compute the limit of the beta and Gamma functions appearing in the denominator. By standard reasoning, it suffices to consider sequences all $\alpha_i < 2$, all $\alpha_i > 2$, or all $\alpha_i = 2$. If all $\alpha_i < 2$, Proposition \ref{lower bound} implies that
\[
\lambda_{k_i,n_i} \geq \frac{C_{k_i,n_i}^{\frac{1}{k_i}} (2-\alpha_i)^{1+\frac{1}{k_i}}}{n_i^{\frac{1}{k_i}}\,B\left(\frac{n_i}{2-\alpha_i}, k_i+1\right)^{\frac{1}{k_i}}}.
\]
Now Stirling's formula, equation \eqref{beta to binom}, and Proposition \ref{binom coeff prop} can be used to show that:
\[
\lim_{i\rightarrow\infty}B\left(\frac{n_i}{2-\alpha_i}, k_i+1\right)^{\frac{1}{k_i}} = \lim_{i\rightarrow\infty} {\frac{2}{2-\alpha_i} k_i \choose k_i}^{-\frac{1}{k_i}} = \frac{2-\alpha}{\alpha}\left(\frac{\alpha}{2}\right)^{\frac{2}{2-\alpha}}.
\]
Using Proposition \ref{binom coeff prop} again, we have the desired lower bound:
\[
\liminf_{i\rightarrow\infty} \lambda_{k_i, n_i} \geq \frac{\alpha^{\alpha+1}}{(\alpha-1)^{\alpha-1}}\left(\frac{2}{\alpha}\right)^{\frac{2}{2-\alpha}}.\qedhere
\]
The other cases are similar -- if $\alpha_i >  2$, then Proposition \ref{lower bound} and Stirling's formula give:
\[
\liminf_{i\rightarrow\infty} \lambda_{k_i,n_i} \geq \lim_{i\rightarrow\infty} \frac{C_{k_i,n_i}^{\frac{1}{k_i}} (\alpha_i-2)^{1 + \frac{1}{k_i}}}{n_i^{\frac{1}{k_i}}\cdot B\left(\frac{2k_i}{\alpha_i-2}, k_i+1\right)^{\frac{1}{k_i}}} = \left(\frac{\alpha^\alpha}{(\alpha-1)^{\alpha-1}}\right) \cdot\alpha\left(\frac{\alpha}{2}\right)^{\frac{2}{\alpha-2}},
\]
while if all $\alpha_i = 2$, we have:
\[
\liminf_{i\rightarrow\infty} \lambda_{k_i,n_i} \geq \lim_{i\rightarrow\infty}\frac{C_{k_i,n_i}^{\frac{1}{k_i}} \cdot n_i}{\Gamma(k_i+1)^{\frac{1}{k_i}}} = 8e.
\]
\end{proof}

\section{Monotonicity}

In this section, we prove Theorem \ref{monotone theorem intro}. Given $1\leq k\leq n$, we continue to use $\alpha := \frac{n}{k}$ and $C_{k,n} := {n\choose k}$. Define now:
\begin{equation}\label{r k defn}
r_k^2 := \frac{\alpha C_{k,n}^{\frac{1}{k}}}{\lambda_{k,n}}.
\end{equation}
Observe that $r_k \rightarrow r_\alpha$ as $k\rightarrow\infty$, by Theorem \ref{limit theorem intro}, where $r_\alpha$ is defined in \eqref{r alpha and A alpha defn}. We start with the following observation:
\begin{proposition}\label{ratio ODE}
Let $1 \leq k \leq n$ and $\alpha := \frac{n}{k}$. Let $u := u_{k,n}$ be the first radial $(k,n)$-eigenfunction, and define:
\[
w_{k,n} := \frac{\alpha u'}{r\left(u'' + (\alpha-1) \frac{u'}{r}\right)}.
\]
Then $w_{k,n}$ satisfies:
\begin{equation}\label{w equation}
w'_{k,n} = \frac{n}{r}\left(1 - w_{k,n} + \frac{r^2}{r_k^2} w^{1 + \frac{1}{k}}_{k,n}\right).
\end{equation}
\end{proposition}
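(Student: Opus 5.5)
The plan is a direct computation. We rewrite the radial equation \eqref{radial k eigenvalue} in terms of the quantity $p := \frac{u'}{r}$ and $w := w_{k,n}$. Then we take a logarithmic derivative of the resulting algebraic relation. We work on $(0,1)$, where $u<0$, $u'>0$ and $u''+(\alpha-1)\frac{u'}{r}>0$; the last holds because $u$ is $\alpha$-convex and the right-hand side of \eqref{radial k eigenvalue} is positive. So $w$ is well defined and positive, and all logarithms below make sense.

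\emph{Step 1: rewrite the eigenvalue equation.} Set $D := u'' + (\alpha-1)\frac{u'}{r}$, so that $D = \frac{\alpha p}{w}$ by the definition of $w$. Using $C_{k-1,n-1} = \frac{k}{n}C_{k,n} = \frac{C_{k,n}}{\alpha}$ from the proof of Proposition \ref{radial k Hessian proposition}, the equation \eqref{radial k eigenvalue} becomes
\[
\frac{C_{k,n}}{\alpha}\, p^{k-1}\cdot \frac{\alpha p}{w} = (-\lambda_{k,n} u)^k ,
\quad\text{i.e.}\quad
p^k = \frac{\lambda_{k,n}^k}{C_{k,n}}\,(-u)^k\, w .
\]
Taking $k$-th roots and using the definition \eqref{r k defn} of $r_k$, this reads
\[
p = \frac{\alpha}{r_k^2}\,(-u)\,w^{1/k}.
\]
Consequently
\[
\frac{u'}{u} = \frac{rp}{u} = -\frac{\alpha r}{r_k^2}\, w^{1/k}.
\]

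\emph{Step 2: an ODE for $p$.} Differentiating $p = \frac{u'}{r}$ gives $p' = \frac{u''}{r} - \frac{p}{r}$. Substituting $u'' = D - (\alpha-1)p$ yields $p' = \frac{D - \alpha p}{r}$. Inserting $D = \frac{\alpha p}{w}$ gives
\[
\frac{p'}{p} = \frac{\alpha}{r}\left(\frac{1}{w} - 1\right).
\]

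\emph{Step 3: differentiate the relation from Step 1.} Take the logarithmic derivative of $p^k = c\,(-u)^k w$, where $c = \lambda_{k,n}^k/C_{k,n}$ is a constant. This gives
\[
\frac{w'}{w} = k\frac{p'}{p} - k\frac{u'}{u}.
\]
Substituting Steps 1 and 2 gives
\[
\frac{w'}{w} = k\left[\frac{\alpha}{r}\left(\frac{1}{w}-1\right) + \frac{\alpha r}{r_k^2} w^{1/k}\right].
\]
Multiplying by $w$ and using $k\alpha = n$ gives exactly \eqref{w equation}.

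The only step needing care is Step 1. One must keep track of the binomial identity $C_{k-1,n-1} = C_{k,n}/\alpha$, which also holds for non-integer parameters via the Gamma function, and of the normalization in \eqref{r k defn}. Regularity of $w$ on $(0,1)$ follows from $u$ being a $C^2$ solution, with $u'''$ existing wherever $D>0$ by differentiating the equation.
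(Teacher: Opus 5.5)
Your computation is correct, but you organize it differently from the paper. The paper differentiates the definition $w=\alpha u'/(rs)$ directly, which brings in $s'$ and hence $u'''$. It then differentiates the eigenvalue equation and uses it to eliminate $s'$ before simplifying.

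You instead use the equation \emph{before} differentiating. This turns it into the algebraic identity $w=C_{k,n}(u'/r)^k/(-\lambda_{k,n}u)^k$, i.e.\ $\frac{u'}{r}=\frac{\alpha}{r_k^2}(-u)\,w^{1/k}$, which you then log-differentiate. The only derivatives you need are those of $p=u'/r$ (rewritten via $D=\alpha p/w$) and of $\log(-u)$.

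Your route has three advantages.
\begin{itemize}
\item No third derivatives appear. The identity shows $w\in C^1(0,1)$ as soon as $u\in C^2$, so your closing remark about $u'''$ is not actually needed.
\item The algebra is shorter and easier to check.
\item The intermediate relation $\frac{u'}{r}=\frac{\alpha}{r_k^2}(-u)\,w^{1/k}$ is what the paper uses later, in less transparent form. With $w\geq 1$ it gives $u'\geq-\frac{\alpha r}{r_k^2}u$. With the two-sided bounds on $w$ it gives $w^{1/k}\to 1$ locally uniformly on $[0,r_\alpha)$, and hence the limiting first-order ODE for $u_\infty$ directly.
\end{itemize}
The paper's route has no particular advantage beyond being a mechanical check.

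One claim you state without justification is that $u<0$ and $u'>0$ on $(0,1)$. This follows from the divergence form. The quantity $r^{n-k}(u')^k=r^n(u'/r)^k$ tends to $0$ at $r=0$. It is nondecreasing on $(0,1)$, because $u\leq 0$ there. It is strictly increasing near $0$, where $u\approx-1$. Hence $u'>0$ on $(0,1)$, and so $u<u(1)=0$ there. The paper takes this for granted as well.
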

\begin{proof}
For convenience, we omit subscripts in the arguments below. Set $s := u'' + (\alpha - 1)\frac{u'}{r}$, and start by computing the derivative of $w$:
\[
\frac{s^2}{\alpha} w' = \frac{s u''}{r} - \frac{s u'}{r^2}- \frac{s' u'}{r}
\]
On the other hand, by differentiating the eigenvalue equation \eqref{radial k eigenvalue}, we have:
\begin{align*}
\frac{C}{\alpha} &\left[ \frac{(k-1)(u')^{k-2} u'' s}{r^{k-1}} - \frac{(k-1)(u')^{k-1} s}{r^k} + \left(\frac{u'}{r}\right)^{k-1} s'\right]  = -k (-\lambda u)^{k-1} \lambda u'
\end{align*}
Simplify the right-hand side using \eqref{radial k eigenvalue}:
\begin{align*}
\frac{C}{\alpha} &\left[ \frac{(k-1)(u')^{k-2} u'' s}{r^{k-1}} - \frac{(k-1)(u')^{k-1} s}{r^k} + \left(\frac{u'}{r}\right)^{k-1} s'\right] = -k \lambda u' \left(\frac{C}{\alpha}\left(\frac{u'}{r}\right)^{k-1} s\right)^{\frac{k-1}{k}}.
\end{align*}
Now divide by $\left(\frac{u'}{r}\right)^{k-2}$ and use the above expression for $\frac{s^2}{\alpha} w'$ to get
\begin{align*}
\frac{C}{\alpha} &\left[ \frac{ks}{r}\left(u'' - \frac{u'}{r}\right) - \frac{s^2}{\alpha} w'\right] = -k \lambda \left(\frac{C}{\alpha}\right)^{\frac{k-1}{k}}\frac{(u')^{\frac{k+1}{k}}}{r^{\frac{1}{k}}} s^{\frac{k-1}{k}}.
\end{align*}
Divide out by $\frac{C}{\alpha}$ and rewrite this as:
\[
\frac{n s}{r}\left(s - \alpha \frac{u'}{r} \right) + n \lambda \left(\frac{C}{\alpha}\right)^{-\frac{1}{k}}\frac{(u')^{\frac{k+1}{k}}}{r^{\frac{1}{k}}} s^{\frac{k-1}{k}} = s^2 w'.
\]
Dividing by $s^2$ and simplifying now gives:
\[
\frac{n}{r}\left(1 - w  + \frac{\lambda r^2}{\alpha C^{\frac{1}{k}}} w^{1 + \frac{1}{k}}\right) = w'.
\]
We finish by using the definition of $r_k$, \eqref{r k defn}.
\end{proof}

The key observation in this section is that $w_{k,n}$ is bounded from below by $1$; this lower bound will immediately imply Theorem \ref{monotone theorem intro}.

\begin{proposition}\label{lower w bound}
We have the following lower bound:
\[
w_{k,n} \geq 1.
\]
In particular, we have:
\[
\frac{C_{k,n}^{\frac{1}{k}}}{\alpha} \left(u''_{k,n} + (\alpha - 1)\frac{u'_{k,n}}{r}\right) \leq -\lambda_{k,n} u_{k,n} \leq C_{k,n}^{\frac{1}{k}} \frac{u'_{k,n}}{r}.
\]
\end{proposition}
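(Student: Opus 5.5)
Since $w_{k,n}$ satisfies the first order ODE \eqref{w equation} of Proposition \ref{ratio ODE}, the plan is a comparison/barrier argument. First I would pin down the value at the origin. By smoothness and radial symmetry, $u'(r) = u''(0) r + O(r^3)$ near $0$, so $\frac{u'}{r}\to u''(0)$ and $u''\to u''(0)$. Hence $w_{k,n}(0) = \frac{\alpha u''(0)}{\alpha u''(0)} = 1$, provided $u''(0) > 0$. Positivity of $u''(0)$ follows from the eigenvalue equation at $r=0$: ${n-1\choose k-1}\alpha\, u''(0)^k/\ldots = \lambda^k > 0$. Next, $w_{k,n} > 0$ on $(0,1]$, since $u'>0$ and $s = u''+(\alpha-1)\frac{u'}{r} > 0$ wherever $-u>0$, and $-u > 0$ on $[0,1)$.

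For the main step, write $F(r,w) := 1 - w + \frac{r^2}{r_k^2} w^{1+1/k}$. At any point $r_0\in(0,1)$ where $w(r_0) = 1$, we get $w'(r_0) = \frac{n}{r_0}\cdot \frac{r_0^2}{r_k^2} > 0$. So $w$ can only cross the level $1$ upward. Formally, suppose $w(r_1) < 1$ for some $r_1 \in (0,1)$, and let $r_0 := \sup\{r < r_1 : w(r)\geq 1\}$. Then $w(r_0)=1$, $w<1$ on $(r_0,r_1]$, and $w'(r_0)>0$, which is a contradiction. The set in the supremum is nonempty, or at least $r_0=0$ is a limit point: this is where the behavior near $0$ matters.

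The delicate point, and the main obstacle, is $r_0 = 0$, since \eqref{w equation} is singular there. I would handle it with an expansion. Since $u$ is smooth and even, $w(r) = 1 + c r^2 + O(r^4)$. Plugging this into \eqref{w equation} gives $2cr = \frac{n}{r}\left(-cr^2 + \frac{r^2}{r_k^2}\right) + O(r^3)$, so $c(2+n) = \frac{n}{r_k^2} > 0$. Thus $w > 1$ on some $(0,\delta)$, and the crossing argument applies on $[\delta,1)$. Alternatively, I could avoid expansions: set $z = w-1$, note that $z' \geq -\frac{n}{r} z + \frac{n r}{r_k^2}$ wherever $w \geq$ something, multiply by $r^n$, and integrate from $0$, using $z(0)=0$. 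The $r=1$ endpoint follows by continuity, or trivially, since $w(1) = \frac{\alpha u'(1)}{u''(1)+(\alpha-1)u'(1)}$ and $S(u)(1) = 0$ gives $s(1)=0$, so $w(1)=+\infty$ in the extended sense.

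For the ``in particular'' statement, $w\geq 1$ reads $\alpha \frac{u'}{r} \geq s$. Write the eigenvalue equation as $\frac{C_{k,n}}{\alpha}\left(\frac{u'}{r}\right)^{k-1}s = (-\lambda u)^k$, using ${n-1\choose k-1}=C_{k,n}/\alpha$. Bounding $\frac{u'}{r}\geq s/\alpha$ gives $(-\lambda u)^k \geq \frac{C}{\alpha^k} s^k$, which is the left inequality. Bounding $s \leq \alpha\frac{u'}{r}$ gives $(-\lambda u)^k\leq C \left(\frac{u'}{r}\right)^k$, which is the right inequality. Taking $k$-th roots finishes the proof.
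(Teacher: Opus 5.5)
Your proof is correct, but it is organized differently from the paper's.

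The paper drops the term $\frac{r^2}{r_k^2}w_{k,n}^{1+\frac1k}$ in \eqref{w equation}. This term is nonnegative on all of $(0,1)$ because $w_{k,n}>0$. Dropping it gives $w'_{k,n}\ge\frac{n}{r}(1-w_{k,n})$, i.e.\ $\frac{d}{dr}\left[r^n(w_{k,n}-1)\right]\ge 0$. Since $r^n(w_{k,n}-1)\to 0$ at the origin, $w_{k,n}\ge 1$ follows immediately, with no case analysis.

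You instead keep that term and run a first-crossing argument: $w=1$ forces $w'=\frac{nr}{r_k^2}>0$. This is valid on $(0,1)$ but cannot handle the singular endpoint. So you need the separate expansion $w=1+cr^2+O(r^4)$ with $c=\frac{n}{(n+2)r_k^2}>0$. That expansion is right, but it uses more regularity than the paper needs: smoothness and evenness of $u$ at $r=0$, which you assert without proof. It is true, e.g.\ by bootstrapping the integrated form
\[
\left(\frac{u'}{r}\right)^k=\frac{n}{C_{k,n}}\int_0^1 s^{n-1}\bigl(-\lambda_{k,n}u(rs)\bigr)^k\,ds,
\]
and you should say so, especially for non-integer $n,k$. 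The integrating-factor route only needs $w$ bounded near $0$. That follows from $\frac{u'}{r}$ and $u''$ both tending to $C_{k,n}^{-1/k}\lambda_{k,n}$.

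Your route does give the strict inequality $w>1$ on $(0,1)$. The paper's route gives this too if the discarded term is kept, since then $\frac{d}{dr}\left[r^n(w-1)\right]=\frac{nr^{n+1}}{r_k^2}w^{1+\frac1k}>0$.

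Your sketched ``alternative'' is essentially the paper's proof. The qualifier ``wherever $w\ge$ something'' is unnecessary, and would be circular if it meant $w\ge 1$: positivity of $w$ alone lets you drop the nonlinear term everywhere. Your derivation of the two-sided inequality from $w\ge 1$ via $\binom{n-1}{k-1}=C_{k,n}/\alpha$ matches the paper.
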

\begin{proof}
Observe that Proposition \ref{ratio ODE} implies that $w'_{k,n} \geq \frac{n}{r}(1-w_{k,n})$. Rewrite this as:
\[
0 \leq  r^{n} w'_{k,n} - n r^{n-1}(1 - w_{k,n}) = -\frac{d}{dr}\left[ r^n (1 - w_{k,n}) \right].
\]
It follows that $r^n(w_{k,n} - 1)$ is monotone increasing in $r$; since this function is $0$ at $r = 0$, we see that $w_{k,n} \geq 1$. \\

This implies now that $u'' + (\alpha - 1) \frac{u'}{r} \leq \alpha\frac{u'}{r}$; the second result follows from applying this to the eigenvalue equation \eqref{radial k eigenvalue}.
\end{proof}

We now give the:
\begin{proof}[Proof of Theorem \ref{monotone theorem intro}]
Let $n = \alpha k$ and $m = \alpha \ell$. We bound $S_{k,n}(u_{\ell, m})$ by using the $(\ell, m)$-eigenvalue equation \eqref{radial k eigenvalue} and then Proposition \ref{lower w bound} as:
\begin{align*}
\frac{C_{k,n}}{\alpha} \left(\frac{u_{\ell,m}'}{r}\right)^{k-1}\left(u''_{\ell,m} + (\alpha-1)\frac{u'_{\ell, m}}{r}\right) &= \frac{C_{k,n}}{C_{\ell,m}} \frac{(-\lambda_{\ell, m} u_{\ell, m})^{\ell}}{(u'_{\ell,m} / r)^{\ell - k}}\\
&\leq C_{k,n}\, C_{\ell, m}^{-\frac{k}{\ell}} (-\lambda_{\ell, m} u_{\ell, m})^{k},
\end{align*}
so that:
\[
S_{k,n}(u_{\ell,m}) \leq \left(C_{k,n}^{\frac{1}{k}}C_{\ell, m}^{-\frac{1}{\ell} } \lambda_{\ell, m}\right)^k (-u_{\ell, m})^k.
\]
The Rayleigh quotient formulation \eqref{Rayleigh quotient formula} of $\lambda_{k,n}$ now implies that:
\[
\lambda_{k,n}^{k} \leq \frac{E_{k,n}[u_{\ell, m}]}{I_{k,n}[u_{\ell,m}]} \leq \left(C_{k,n}^{\frac{1}{k}}C_{\ell, m}^{-\frac{1}{\ell} } \lambda_{\ell, m}\right)^k,
\]
from which the result follows.
\end{proof}

\section{Convergence of the Eigenfunctions}

We now show a partial upper bound for $w_{k,n}$, for small enough $\alpha$; from this, we will be able to deduce convergence of the eigenfunctions.

\begin{proposition}\label{upper w bound}
Suppose either that
\begin{enumerate}[(a)]
\item $\alpha \leq 2$, or
\item $\alpha < 2e$ and $k$ is sufficiently large (depending only on $\alpha$).
\end{enumerate} 
Then:
\begin{equation}\label{upper equation}
w_{k,n}(r) \leq \frac{r_k^2}{r^2_k - r^2}\quad\text{for all $r \in [0, r_k)$.}
\end{equation}
\end{proposition}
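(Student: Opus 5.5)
The plan is a barrier (comparison) argument for the first-order ODE \eqref{w equation} of Proposition \ref{ratio ODE}. Set
\[
W(r) := \frac{r_k^2}{r_k^2 - r^2}, \qquad r\in[0,r_k).
\]
The key observation is that $W$ is an exact zero of the linear part of the right-hand side of \eqref{w equation}: $1 - W + \frac{r^2}{r_k^2}W = 0$. Hence, at any point where $w := w_{k,n}$ equals $W$, the right-hand side of \eqref{w equation} is
\[
\frac{n}{r}\cdot\frac{r^2}{r_k^2}\,W\left(W^{1/k}-1\right) = \frac{n r}{r_k^2}\,W\left(W^{1/k}-1\right),
\]
while a direct computation gives $W' = \frac{2r}{r_k^2}W^2$. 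So it suffices to show the pointwise inequality
\begin{equation*}
n\left(W^{1/k}-1\right) < 2W \quad\text{whenever } W > 1,
\end{equation*}
and then run a first-touching argument.

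\textbf{Behavior near $r=0$.} Both functions equal $1$ at $r=0$. Since $u$ is smooth and radial, $w$ is smooth and even near $0$, so we may write $w = 1 + a r^2 + O(r^4)$. Plugging this into \eqref{w equation} and matching the $r^1$ terms gives
\[
2a = n\left(-a + \frac{1}{r_k^2}\right), \qquad\text{so}\qquad a = \frac{n}{(n+2)\,r_k^2} < \frac{1}{r_k^2}.
\]
Since $W = 1 + r^2/r_k^2 + O(r^4)$, this gives $w < W$ on some interval $(0,\delta)$.

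\textbf{Touching argument.} If the conclusion fails, let $r_0\in(\delta,r_k)$ be the first point with $w(r_0)=W(r_0)$. Then $w<W$ on $(0,r_0)$ forces $w'(r_0)\geq W'(r_0)$. By the computation above, this contradicts the strict pointwise inequality, noting that $W(r_0)>1$. It remains to verify that inequality in the two cases.

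\textbf{Case (a), $\alpha\le 2$.} By concavity of $x\mapsto x^{1/k}$ (Bernoulli's inequality) we have $W^{1/k}-1 \leq \frac{W-1}{k}$. Hence
\[
n\left(W^{1/k}-1\right) \leq \alpha(W-1) = \alpha W - \alpha < 2W,
\]
using $\alpha\le 2$ and $\alpha > 0$.

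\textbf{Case (b), $2<\alpha<2e$, $k$ large.} Put $t := \log W > 0$. Using $e^{x}-1 \le x e^{x}$ for $x\ge0$, we get $k\left(e^{t/k}-1\right) \leq t\,e^{t/k}$. So it suffices to have
\[
\alpha\, t\, e^{-t(1-1/k)} < 2 \quad\text{for all } t>0.
\]
The left side is maximized at $t = (1-1/k)^{-1}$, where it equals $\frac{\alpha}{e(1-1/k)}$. This is less than $2$ once $k > \frac{2e}{2e-\alpha}$, a threshold depending only on $\alpha$. This is precisely where $\alpha<2e$ enters: the limiting inequality $\alpha t < 2e^t$ holds for all $t$ exactly when $\alpha<2e$.

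\textbf{Main obstacle.} I expect the main care to be needed in the regularity and strictness near $r=0$. There $w$ and $W$ coincide to zeroth order, so the second-order expansion of $w$ is needed; in the write-up I would justify it from smoothness of $u_{k,n}$ at the origin, or via the integrating factor used in Proposition \ref{lower w bound}. Once that is in place, the comparison itself is elementary.
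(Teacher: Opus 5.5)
Your proposal is correct and is essentially the paper's proof. The paper likewise shows that $f=\frac{r_k^2}{r_k^2-r^2}$ is a supersolution of \eqref{w equation}, reducing this to $h(\rho)=\rho-1+(1-\rho)^{1-1/k}\le\frac{2}{n}$ with $\rho=r^2/r_k^2$, which is your inequality $n(W^{1/k}-1)\le 2W$ after substituting $W=\frac{1}{1-\rho}$, and then concludes by comparison. The differences are only technical: the paper maximizes $h$ exactly instead of using Bernoulli and $e^x-1\le xe^x$, and its Gr\"onwall comparison on $d=w-f$ needs only the non-strict inequality and $d(0)=0$, whereas your strict first-touching argument also needs $w<W$ near $r=0$. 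Your integrating-factor identity $r^n(w-1)=\int_0^r \frac{n s^{n+1}}{r_k^2}\,w^{1+1/k}\,ds$ supplies that without assuming smoothness of $u$ at the origin.
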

\noindent Note that, by Theorem \ref{monotone theorem intro}, $\frac{r_k^2}{r_k^2 - r^2} \leq \frac{r_\alpha^2}{r_\alpha^2 - r^2}$ for all $r \leq r_\alpha$; thus, \eqref{upper equation} implies the bound \eqref{upper w bound intro} from the Introduction.

\begin{proof}
Define $\rho(r) := \frac{r^2}{r_k^2}$, so that $\rho \in [0, 1)$. We claim that
\[
f(r) := \frac{r_k^2}{r_k^2 - r^2} = \frac{1}{1-\rho}
\]
is a supersolution to \eqref{w equation} on the range $[0, r_k)$, i.e. that:
\begin{equation}\label{supersolution claim}
\frac{df}{dr} \geq \frac{n}{r}\left(1 - f + \rho f^{1+\frac{1}{k}}\right)\quad \text{ for all $r\in[0,r_k)$}.
\end{equation}
Assuming the claim, we can finish by an application of Chaplygin comparison, which proceeds as follows. Define $d := w - f$; note that $d(0) = 0$. Suppose for the sake of a contradiction that $r_0 \in (0, r_k)$ is such that $d(r_0) > 0$. By continuity, there exists $0 \leq a < r_0 < b\leq r_k$ such that $d(a) = 0$ and $d(r) > 0$ for all $r\in (a,b)$. 

Now define:
\[
p := \frac{n \rho}{r}\frac{w^{1 + \frac{1}{k}}_{k,n} - f^{1+\frac{1}{k}}} {w_{k,n} - f}
\]
for $w \not= f$, and extend it continuously to all of $[0, r_k)$. Then, by the claim \eqref{supersolution claim} and $d > 0$ on $(a,b)$, we have:
\[
d' \leq p\, d\quad \text{ for all $r\in [a,b)$}.
\]
But then Gr\"onwall's inequality implies that $d(r_0) \leq 0$, a contradiction.

We are left to show \eqref{supersolution claim}, which is equivalent to showing that
\[
\frac{(1-\rho)^2}{\rho}\left[1 - f + \rho f^{1+\frac{1}{k}} \right]\leq \frac{(1-\rho)^2}{\rho}\frac{r}{n} \frac{df}{dr} = \frac{2}{n}.
\]
The left-hand side can be simplified to give the following function of $\rho$:
\[
h(\rho) := \rho - 1 + (1-\rho)^{1-\frac{1}{k}}.
\]
We compute the maximum of $h$ on $[0, 1]$; the exceptional case is $(a)$ when $k = 1$, in which case $\alpha = n \leq 2$ and \eqref{supersolution claim} is clear. Otherwise, the maximum of $h$ is achieved at $\rho_0\in [0, 1)$ solving
\[
1 - \left(1-\frac{1}{k}\right)(1-\rho_0)^{-\frac{1}{k}} = 0 \implies 1-\rho_0 = \left(1 - \frac{1}{k}\right)^k.
\]
This means that the maximum value is:
\[
h(\rho_0) = \frac{1}{k}\left(1 - \frac{1}{k}\right)^{k-1}.
\]
Now, in case $(a)$, we have $\frac{1}{k} \leq \frac{2}{n}$, and \eqref{supersolution claim} is immediate. If we only have $\alpha < 2e$, then we use the fact that $\left(1-\frac{1}{k}\right)^{k-1}$ decreases towards $e^{-1} < \frac{2}{\alpha}$, so that we can choose $k$ sufficiently large so that $h(\rho_0) < \frac{2}{n}$, again showing \eqref{supersolution claim}.
\end{proof}

We now translate the bounds on $w_{k,n}$ into bounds on $u_{k,n}$; we will actually get a bound on the derivative as well.
\begin{proposition}\label{entire lower u bound}
Assume again that either
\begin{enumerate}[(a)]
\item $\alpha \leq 2$, or
\item $\alpha < 2e$ and $k$ is sufficiently large (depending only on $\alpha$).
\end{enumerate} 
Define another piecewise function:
\[
v_{k, \alpha}(r)  := \begin{cases} - e^{-\frac{\alpha r^2}{2r_k^2}} &\text{ for } 0 \leq r\leq r_k \\ B_{k} r^{2-\alpha} - C_k &\text{ for }r_k < r, \end{cases}
\]
where $B_{k}, C_k$ are chosen such that $v_{k, \alpha} \in C^2(0,1)$. Then we have both:
\begin{equation}\label{entire lower u bound equation}
u_{k,n} \geq v_{k, \alpha}\quad\text{ and }\quad u_{k,n}'\geq v_{k,\alpha}' \quad \text{ on all of }[0,1].
\end{equation}
\end{proposition}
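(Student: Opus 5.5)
The plan is to rewrite the eigenvalue equation \eqref{radial k eigenvalue} in terms of $w := w_{k,n}$. Then the lower bound $w\geq 1$ (Proposition \ref{lower w bound}) controls $u$, and the upper bound $w\leq f := \frac{r_k^2}{r_k^2-r^2}$ (Proposition \ref{upper w bound}, which is where hypotheses (a)/(b) enter) controls $u'$. Both bounds are first used on $[0,r_k]$. Convergence beyond $r_k$ then follows from $\alpha$-convexity.

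\emph{Step 1 (first-order reformulation).} Write $u = u_{k,n}$ and $s := u''+(\alpha-1)\frac{u'}{r}$, so that $s = \frac{\alpha u'}{r w}$. Substituting this into \eqref{radial k eigenvalue}, which reads $\frac{C_{k,n}}{\alpha}\left(\frac{u'}{r}\right)^{k-1}s = (-\lambda u)^k$, gives $C_{k,n}\left(\frac{u'}{r}\right)^k w^{-1} = (-\lambda u)^k$. Taking $k$-th roots and using the definition \eqref{r k defn} of $r_k$, this becomes
\[
\frac{u'}{r} = \frac{\alpha}{r_k^2}\, w^{\frac{1}{k}}\,(-u).
\]
Since $w\geq 1$, we get $\frac{d}{dr}\log(-u) = \frac{u'}{u} \leq -\frac{\alpha r}{r_k^2}$. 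Integrating from $0$, where $u(0)=-1$, gives $-u(r)\leq e^{-\frac{\alpha r^2}{2r_k^2}}$, that is, $u\geq v_{k,\alpha}$ on $[0,\min(r_k,1)]$.

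\emph{Step 2 (derivative on $[0,r_k]$).} Here the upper bound enters. From $w\leq f$ we get $s = \frac{\alpha u'}{rw}\geq \frac{\alpha u'}{r}\cdot\frac{r_k^2-r^2}{r_k^2}$. Rearranging gives $u''\geq \frac{u'}{r} - \frac{\alpha r}{r_k^2}u'$, which is equivalent to
\[
\frac{d}{dr}\log\frac{u'}{r}\geq -\frac{\alpha r}{r_k^2}.
\]
Hence $\frac{u'}{r}(r)\geq \left(\lim_{t\to 0}\frac{u'}{t}\right)e^{-\frac{\alpha r^2}{2r_k^2}}$. Since $v_{k,\alpha}'/r = \frac{\alpha}{r_k^2}e^{-\frac{\alpha r^2}{2r_k^2}}$, it remains to show that $\lim_{r\to0}\frac{u'}{r} = \frac{\alpha}{r_k^2}$. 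By the identity in Step 1 together with $u(0)=-1$, this is the same as $w(0)=1$. That in turn comes from the ODE \eqref{w equation}: regularity at $r=0$ forces $1-w+\frac{r^2}{r_k^2}w^{1+1/k}\to 0$. Alternatively, one can compute $w(0)=\alpha u''(0)/(u''(0)+(\alpha-1)u''(0)) = 1$ directly. I expect this boundary behaviour at $r=0$ to be the main technical point: it requires that $u$ is $C^2$ up to $0$ with $u''(0)>0$ and $u'(r)/r\to u''(0)$. I would justify this from the smoothness of radial eigenfunctions, i.e.\ the same fact already used implicitly in the proof of Proposition \ref{lower w bound}.

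\emph{Step 3 (the interval $(r_k,1]$, if $r_k<1$).} The $\alpha$-convexity of $u$ says exactly that $\left(r^{\alpha-1}u'\right)'\geq 0$. On the other hand, $r^{\alpha-1}v_{k,\alpha}' = (2-\alpha)B_k$ is constant there, because $v_{k,\alpha}''+(\alpha-1)v_{k,\alpha}'/r = 0$ on that piece. Combining these with Step 2 at $r_k$ and the $C^1$ matching of $v_{k,\alpha}$ gives, for $r>r_k$,
\[
r^{\alpha-1}u'(r)\geq r_k^{\alpha-1}u'(r_k)\geq r_k^{\alpha-1}v_{k,\alpha}'(r_k) = r^{\alpha-1}v_{k,\alpha}'(r).
\]
Finally, integrating $u'\geq v_{k,\alpha}'$ from $r_k$ and using $u(r_k)\geq v_{k,\alpha}(r_k)$ from Step 1 gives $u\geq v_{k,\alpha}$ on $[r_k,1]$. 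This completes \eqref{entire lower u bound equation}. (The case $\alpha=2$, where the outer piece is logarithmic, is handled identically.)
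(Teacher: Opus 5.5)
Your proposal is correct and is essentially the paper's argument. You use $w_{k,n}\geq 1$ with Gr\"onwall to get $u_{k,n}\geq v_{k,\alpha}$ on $[0,r_k]$, and compare logarithmic derivatives via $w_{k,n}\leq \frac{r_k^2}{r_k^2-r^2}$, normalized by $\lim_{r\to 0}u_{k,n}'/r = C_{k,n}^{-1/k}\lambda_{k,n}=\alpha/r_k^2$, to get $u_{k,n}'\geq v_{k,\alpha}'$ there; you then extend past $r_k$ using monotonicity of $r^{\alpha-1}u_{k,n}'$ from $\alpha$-convexity, where you work with $\log(u'/r)$ in place of the paper's $\log u'$ versus $\log v_{k,\alpha}'$, a purely cosmetic difference (and in your opening paragraph ``convergence beyond $r_k$'' should read ``comparison beyond $r_k$'').
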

\begin{proof}
By Proposition \ref{lower w bound}, we have:
\[
u_{k,n}' \geq -\frac{\alpha r}{r_k^2} u_{k,n}.
\]
On the other hand, $v_{k,\alpha}$  satisfies $-\frac{\alpha r}{r_k^2} v_{k,\alpha} = v_{k,\alpha}'$ on the interval $(0, r_k)$. Since $v_{k,\alpha}(0) = u_{k,n}(0) = -1$, Gr\"onwall's inequality implies \eqref{entire lower u bound equation} on $[0, r_k]$. Observe that this holds for any $\alpha \geq 1$.

For the other interval, $[r_k, 1)$, assuming that it's non-empty, we have from the definition of $\alpha$-convexity that
\begin{equation}\label{alpha conv condition here}
u_{k,n}'' + (\alpha - 1)\frac{u_{k,n}'}{r} \geq 0 = v_{k,\alpha}'' + (\alpha - 1)\frac{v_{k,\alpha}'}{r}.
\end{equation}
Since $u_{k,n}(r_k) \geq v_{k,\alpha}(r_k)$, we will thus be able to conclude that $u_{k,n} \geq v_{k,\alpha}$ on all of $[0,1]$ if we can show that
\begin{equation}\label{temp claim here}
u'_{k,n}(r_k) \geq v'_{k,\alpha}(r_k),
\end{equation}
by Gr\"onwall. Actually, \eqref{temp claim here} would also imply that $u'_{k,n} \geq v'_{k,\alpha}$ on $[r_k, 1]$ as well, by rearranging \eqref{alpha conv condition here} and applying Gr\"onwall starting from $r = r_k$.

To show \eqref{temp claim here}, we will actually show that $u'_{k,n} \geq v'_{k,\alpha}$ on $[0, r_k]$, which will complete the proposition. Observe:
\[
\frac{\alpha}{r} \left(\frac{1}{\frac{d}{dr}\log v'_{k,\alpha} + \frac{\alpha - 1}{r}}\right) = \frac{\alpha v'_{k,\alpha}}{r (v_{k,\alpha}'' + (\alpha - 1)\frac{v'_{k,\alpha}}{r})} = \frac{r_k^2}{r_k^2 - r^2}
\]
on $(0, r_k)$. Thus, Proposition \ref{upper w bound} implies that:
\[
\frac{\alpha}{r} \left(\frac{1}{\frac{d}{dr}\log u'_{k,n} + \frac{\alpha - 1}{r}}\right) = w_{k,n}  \leq \frac{\alpha}{r} \left(\frac{1}{\frac{d}{dr}\log v'_{k,\alpha} + \frac{\alpha - 1}{r}}\right)
\]
on $(0, r_k)$. Rearranging gives:
\begin{equation}\label{log comparison}
\frac{d}{dr} \log v'_{k,\alpha} \leq \frac{d}{dr}\log u'_{k,n}.
\end{equation}
The eigenvalue equation \eqref{radial k eigenvalue} implies that $\lim_{r\rightarrow 0} u''_{k,n} = C_{k,n}^{-\frac{1}{k}} \lambda_{k,n}$, so now:
\[
 \lim_{r\rightarrow 0} \frac{u'_{k,n}}{v'_{k,\alpha}} = \lim_{r\rightarrow0}\frac{u''_{k,n}}{v''_{k,\alpha}} = \frac{\lambda_{k,n} r_k^2}{\alpha C_{k,n}^{\frac{1}{k}}} = 1.
\]
Integrating both sides of \eqref{log comparison} and simplifying then finishes.
\end{proof}

We can now prove Theorem \ref{eigenfunction theorem intro}

\begin{proof}[Proof of Theorem \ref{eigenfunction theorem intro}]

First, observe that the lower-bound from Proposition \ref{lower w bound} can be rewritten as:
\[
\frac{d}{dr} \log \frac{u'_{k,n}}{r} \leq 0 \quad \text{ for all $r\in(0,1)$}.
\]
Since \eqref{radial k eigenvalue} implies that $\lim_{r\rightarrow0} \frac{u'}{r} = C_{k,n}^{-\frac{1}{k}}\lambda_{k,n}$, Theorem \ref{monotone theorem intro} implies that:
\begin{equation}\label{something to reference}
0 \leq \frac{u'_{k,n}}{r} \leq 
\frac{\alpha}{r_\alpha^2},\quad \text{ for all $k\geq 1$}.
\end{equation}
Using the definition of $\alpha$-convexity and Proposition \ref{lower w bound} again, we now see:
\[
(1-\alpha)\frac{u'_{k,n}}{r} \leq u''_{k,n} \leq \frac{u'_{k,n}}{r},
\]
so that $u''_{k,n}$ is also uniformly bounded. Fix $\beta\in(0,1)$; then any subsequence of $\{u_{k,\alpha k}\}_{k\geq 1}$ has a $C^{1,\beta}$-convergent subsequence. We choose such a subsequence, and write $u_\infty$ for the limit.

We now identify $u_\infty$. First, Proposition \ref{upper w bound} and Proposition \ref{lower w bound} imply:
\begin{equation}\label{something 2}
u''_{k,n} + (\alpha - 1)\frac{u'_{k,n}}{r} \geq \frac{r_\alpha^2 - r^2}{r_\alpha^2}\frac{\alpha u'_{k,n}}{r} \geq \frac{r_\alpha^2 - r^2}{r_\alpha^2} \frac{\alpha (-\lambda_{k,n}u_{k,n})}{C_{k,n}^{1/k}}
\end{equation}
on $[0, r_\alpha)$. Now integrating the derivative bound in \eqref{entire lower u bound equation} from $r$ to $1$ gives:
\[
u_{k,n}(r) \leq v_{k,\alpha}(r) - v_{k,\alpha}(1).
\]
As $k\rightarrow\infty$, it is easy to see that the $v_{k,\alpha}$ increase uniformly to $v_\alpha$, which is continuous, strictly negative on $[0, r_\alpha)$, and satisfies $v_{\alpha}(1) = 0$; hence, on any compact subinterval of $[0, r_\alpha)$, $u_{k,n}$ is uniformly bounded away from 0 for all $k$ sufficiently large. 
%
Thus, \eqref{something 2} implies:
\[
\left(u''_{k,n} + (\alpha - 1)\frac{u'_{k,n}}{r}\right)^{\frac{1}{k}} \rightarrow 1.
\]
By then taking the limit of the $k^\text{th}$-root of the eigenvalue equation \eqref{radial k eigenvalue}, we see that $u_\infty$ satisfies:
\[
\frac{u_\infty'}{r} = -\frac{\alpha}{r_\alpha^2} u_\infty.
\]
Since $u_\infty(0) = -1$, $u_\infty = v_\alpha$ on $[0, r_\alpha)$. Since the convergence is known to be $C^{1,\beta}$ on all of $[0, 1]$, we conclude that our subsequence converges to $v_\alpha$ in $C^{1,\beta}([0, r_\alpha])$, including the endpoints.

Fix $\e > 0$. For convenience, we suppose that $\alpha < 2$ and set $\ti{v}(r) := (A_\alpha - \e)(r^{2-\alpha} - 1)$. From the above, we have
\[
u_{k,\alpha k}(r_\alpha) < \ti{v}(r_\alpha)\quad\text{ and }\quad u_{k,\alpha k}(1) = \ti{v}(1)
\]
for all $k$ sufficiently large. By the Sturm comparison theorem, applied to the $\alpha$-convexity condition $u'' + (\alpha - 1)\frac{u'}{r} \geq 0$, we see that $u_{k,\alpha k} \leq \ti{v}$ on $[r_\alpha, 1]$ for $k$ large. Since $\e$ was arbitrary, it now follows from Proposition \ref{entire lower u bound} that $u_\infty = v_\alpha$ on $[r_\alpha, 1]$ as well. 

Since our original subsequence was arbitrary, we conclude that the full sequence $\{u_{k, \alpha k}\}$ converges to $v_\alpha$ in $C^{1,\beta}([0,1])$.

For $2\leq \alpha < 2e$, the argument is the same after making the obvious changes to $\ti{v}$.
\end{proof}

\section{Complex Case}

We illustrate how our results can be applied to the complex setting. Briefly, recall that for $u \in C^\infty(B_1^{2n})$ (thinking of $B_1^{2n}\subset \R^{2n} \cong \C^n$), and integer $1\leq k\leq n$, the {\bf complex $k$-Hessian operator} acting on $u$ is defined to be:
\[
H_{k,n}(u) := \sigma_{k}(\lambda^\C),
\]
where $\lambda^\C = (\lambda_1^\C, \ldots, \lambda_n^\C)$ are the eigenvalues of the complex Hessian of $u$. Similar to Proposition \ref{radial k Hessian proposition}, we have:
\begin{proposition}\label{complex radial k Hessian proposition}
Suppose that $u$ is a smooth, radially symmetric function on $B_1^{2n}\subset \C^n$. Then:
\[
H_{k, n}(u) = \frac{1}{2^{k}} \frac{C_{k,n}}{C_{k,2n}} S_{k, 2n}(u),
\]
where we write $C_{k,n} = {n \choose k}$. 
\end{proposition}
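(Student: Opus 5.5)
We will derive the formula by comparing eigenvalues directly, exactly as in the proof of Proposition \ref{radial k Hessian proposition}. The first step is to compute the complex Hessian of a radial function $u(z) = u(r)$, with $r = |z|$. Writing $u_{i\bar j} = \partial_i\partial_{\bar j} u$ and using $\partial_{\bar j} r = \frac{z_j}{2r}$, one finds
\[
u_{i\bar j} = \frac{1}{4}\left[\left(u'' - \frac{u'}{r}\right)\frac{\bar z_i z_j}{r^2} + \frac{2u'}{r}\delta_{ij}\right] \cdot \text{(normalization)},
\]
where the normalization is fixed so that the trace is $\frac14\Delta u$ (or $\Delta u$, depending on convention). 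We choose the convention consistent with the factor $2^{-k}$ in the statement. With $\partial_i\partial_{\bar j}$ defined as $\frac14(\partial_{x}^2+\partial_y^2)$ on the diagonal, the matrix $(u_{i\bar j})$ has eigenvalue $\frac12\cdot\frac{u'}{r}$ with multiplicity $n-1$, on the complement of $z$. In the direction of $z$ its eigenvalue is $\frac14\left(u'' + \frac{u'}{r}\right)$. We will pin down the constant by checking the case $k=1$.

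The second step expands $\sigma_k$ on these eigenvalues, just as in the real case:
\[
H_{k,n}(u) = \binom{n-1}{k}\left(\tfrac{u'}{2r}\right)^k + \binom{n-1}{k-1}\left(\tfrac{u'}{2r}\right)^{k-1}\tfrac{1}{4}\left(u''+\tfrac{u'}{r}\right).
\]
Factoring out $2^{-k}\binom{n-1}{k-1}\left(\frac{u'}{r}\right)^{k-1}$ leaves
\[
\tfrac{1}{2}\left(u'' + \tfrac{u'}{r}\right) + \tfrac{n-k}{k}\tfrac{u'}{r} = \tfrac12\left(u'' + \tfrac{2n-k}{k}\tfrac{u'}{r}\right).
\]
This bracket is exactly the real operator's bracket with $n$ replaced by $2n$. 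Hence
\[
H_{k,n}(u) = \frac{1}{2^{k+1}}\binom{n-1}{k-1}\left(\tfrac{u'}{r}\right)^{k-1}\left(u'' + \tfrac{2n-k}{k}\tfrac{u'}{r}\right).
\]

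The third step compares this with Proposition \ref{radial k Hessian proposition} for $S_{k,2n}$, whose prefactor is $\binom{2n-1}{k-1}$. Using $\binom{n-1}{k-1} = \frac{k}{n}C_{k,n}$ and $\binom{2n-1}{k-1} = \frac{k}{2n}C_{k,2n}$, the ratio of the prefactors is $\frac{1}{2^{k+1}}\cdot\frac{2C_{k,n}}{C_{k,2n}} = \frac{1}{2^k}\frac{C_{k,n}}{C_{k,2n}}$. This gives the claim.

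The only delicate point is the normalization of $\partial\bar\partial$, since it fixes the power of $2$. We will settle it by checking $k=1$ with $u=r^2$. There, $\sum_i \partial_i\partial_{\bar i}|z|^2 = n$, while $\frac12\frac{C_{1,n}}{C_{1,2n}}\Delta r^2 = \frac12\cdot\frac{n}{2n}\cdot 4n = n$, so the two conventions agree. Once this is confirmed, the remaining steps are routine algebra.
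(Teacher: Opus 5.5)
Your proposal is correct and follows the paper's proof essentially verbatim. Both compute the complex Hessian's eigenvalues $\frac{u'}{2r}$ (multiplicity $n-1$) and $\frac14\left(u''+\frac{u'}{r}\right)$, expand $\sigma_k$ to get $\frac{1}{2^{k+1}}\binom{n-1}{k-1}\left(\frac{u'}{r}\right)^{k-1}\left(u''+\frac{2n-k}{k}\frac{u'}{r}\right)$, and compare prefactors with Proposition~\ref{radial k Hessian proposition} for $S_{k,2n}$. Your hedging about the normalization is unnecessary: the formula you display for $u_{i\bar j}$ is already exact, with no extra factor, under the standard convention $\partial_{z_i}=\frac12\left(\partial_{x_i}-\sqrt{-1}\,\partial_{y_i}\right)$, which is the one the paper uses.
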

\begin{proof}
Let $z_i = x_i + \sqrt{-1} y_i$ be complex coordinates on $\C^n$, so that $r^2 = \sum_{i=1}^n z_i\ov{z}_i$. Then the complex Hessian of $u$ can be computed as:
\[
u_{i\ov{j}} := \frac{\p^2 u}{\p z_i \pbar z_j} = \frac{1}{2}\left[\frac{u'}{r}\delta_{i\ov{j}} + \frac{1}{2}\left(u'' - \frac{u'}{r}\right)\frac{\ov{z}_i z_j}{r^2}\right]
\]
which has eigenvalues $\frac{1}{4}\left(u'' + \frac{u'}{r}\right)$, with multiplicity 1, and $\frac{u'}{2r}$, with multiplicity $n-1$. Then we see that:
\begin{align*}
H_{k, n}(u) &= {n-1 \choose k-1}\left(\frac{u'}{2r}\right)^{k-1}\cdot \frac{1}{4}\left(u'' + \frac{u'}{r}\right) + {n-1 \choose k}\left(\frac{u'}{2r}\right)^{k}\\
&= \frac{1}{2^{k+1}}{n-1 \choose k-1} \left(\frac{u'}{r}\right)^{k-1} \left[u'' + \frac{2n-k}{k}\frac{u'}{r} \right].
\end{align*}
The proposition follows from rewriting the coefficients out front.
\end{proof}

It follows that $H_{k,n}(u_{k,2n}) = \frac{1}{2^{k}} \frac{C_{k, n}}{C_{k, 2n}}(-\lambda_{k,2n} u_{k,2n})^k$, so that
\[
\lambda_{k,n}^\C := \frac{1}{2}\frac{C_{k,n}^{1/k}}{C_{k,2n}^{1/k}} \lambda_{k, 2n}
\]
is the first {\bf complex $(k,n)$-Hessian eigenvalue.} It follows from Proposition \ref{binom coeff prop} and Theorem \ref{limit theorem intro} that, for any $\alpha > 1$, we have:
\[
\lim_{k\rightarrow\infty} \lambda_{k, \alpha k}^\C = \frac{\alpha^{\alpha+1}}{(\alpha-1)^{\alpha - 1}}\left(\frac{1}{\alpha}\right)^{\frac{1}{1-\alpha}},
\]
while for $\alpha = 1$ (which corresponds to the case of the complex Monge-Amp\`ere operator) we have:
\[
\lim_{k\rightarrow\infty} \lambda_{k, k}^\C = e.
\]

%

\section{Numerical Results and Further Questions}

In addition to the above results, we were able to numerically solve \eqref{radial k eigenvalue} for large values of $k$ and $n$ by utilizing standard ODE solvers and incrementing in $n$. Briefly, we start by fixing $\alpha$ and solving the ODE for $n = 1$ by integrating by adaptive collocation (using \texttt{scipy.integrate.solve\_bvp}), refining the mesh until the relative residual meets tolerance. A bit more precisely, the eigenfunction mesh is composed of $C^1$ piecewise-cubics, and the collocation and boundary equations are solved together by a Newton solver. Each refinement step selects intervals whose post-convergence residual exceeds tolerance, after which new nodes are initialized by interpolating the previous solution and the full system is re-solved globally. The eigenvalue $\lambda$ is a single global unknown, updated simultaneously with the node values and closed by a normalization condition.

The mesh is initiated as a uniform grid with tolerance $10^{-8}$, and the initial guess we use is $u\approx r^{2}-1$. At $r = 0$, we impose the constraint $u''(0) = C^{-1/k} \lambda$ directly, to avoid dealing with the indeterminate expression $\frac{u'}{r}$. To avoid overflow, binomial coefficients are computed in logarithmic form and combined by a log-sum-exp before exponentiation.

To progress, we use continuation: walking from small $n$ to $n_{\max}$, each converged solution warm-starts the next (mesh and $\lambda$ guess). This reduces iterations in the large-$n$ regime where cold starts fail. A failed step bisects the interval $[n_{\mathrm{prev}},n]$ and inserts the midpoint, recursing until convergence or $n-n_{\mathrm{prev}} < \delta_{\min}$ (default $0.1$); the last converged solution is then reported.

For each $\alpha$, we record $\{(n,\, \lambda_{k,n},\,u_{k,n})\}$ with each eigenfunction sampled on a $500$-point grid.  We include several figures on the pages below to illustrate our outputs. Full code can be found on the second author's Github page, at \url{https://github.com/AbCoding/PXML_k-Hessian}.

\raggedbottom

\begin{figure}[H]
   \begin{minipage}{0.42\textwidth}
     \centering
     \includegraphics[width=\linewidth]{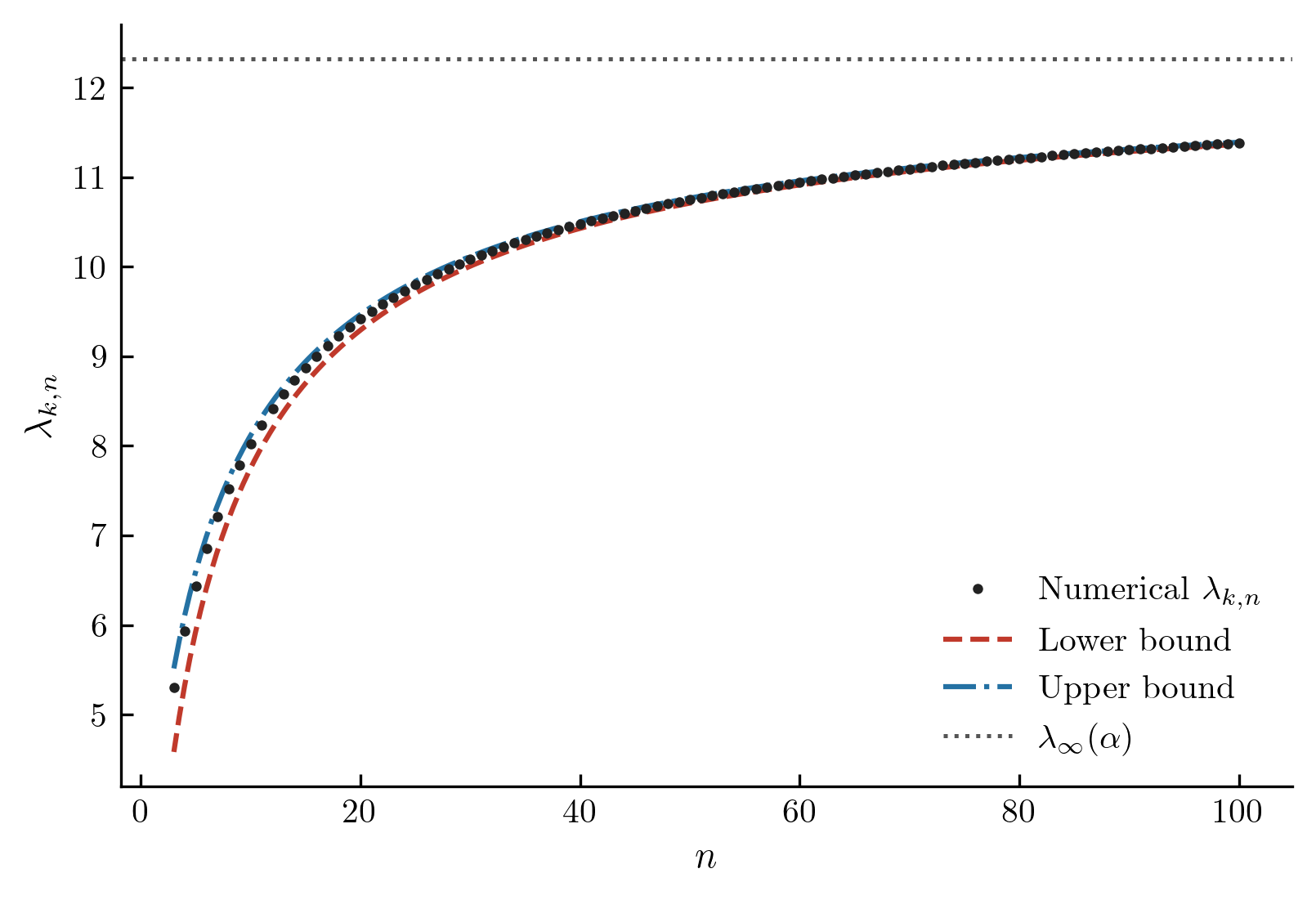}
   \end{minipage}\hfill
   \begin{minipage}{0.42\textwidth}
     \centering
     \includegraphics[width=\linewidth]{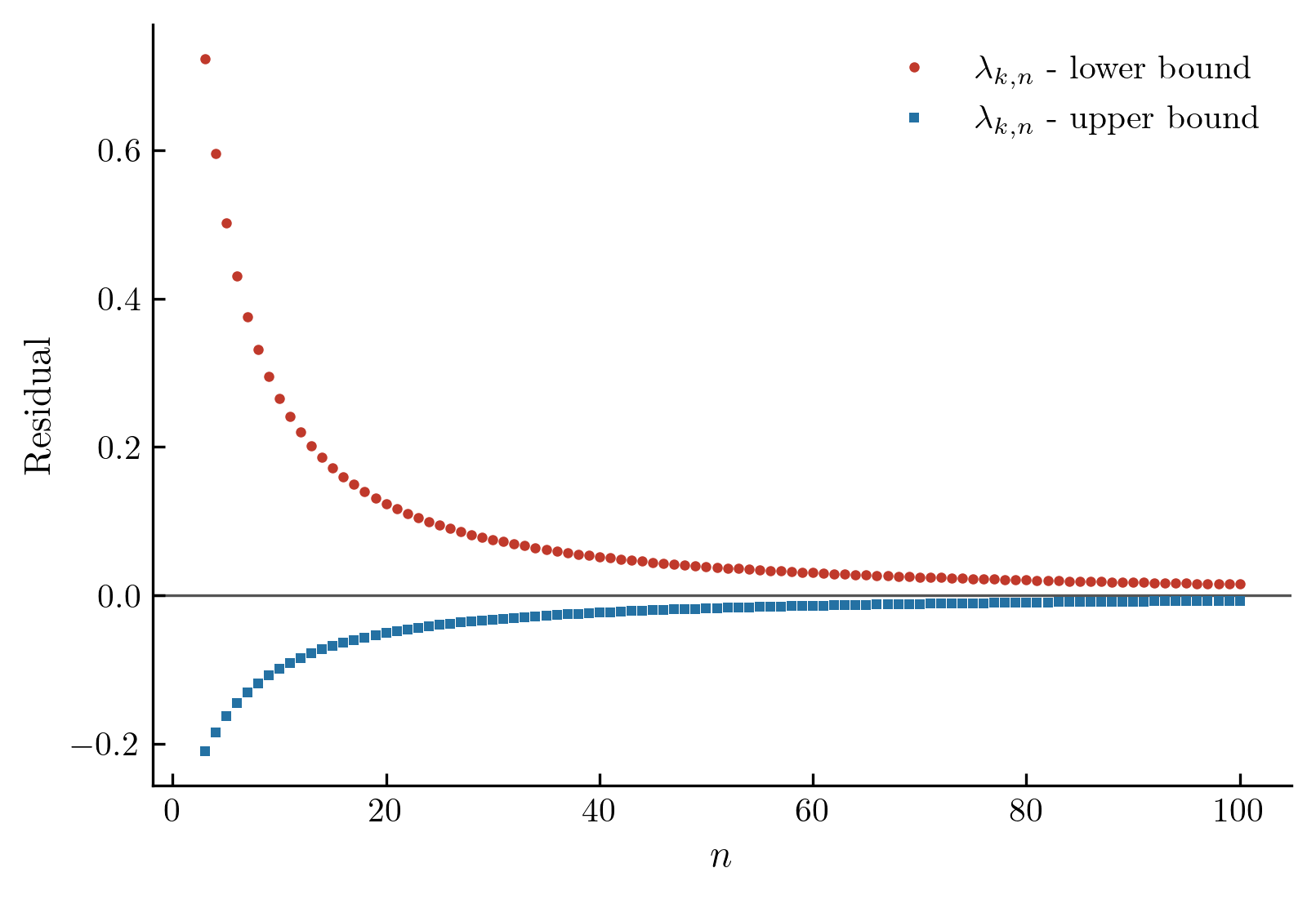}
   \end{minipage}
\caption{The left plot depicts selected numerically computed values of $\lambda_{k,n}$ when $\alpha = \frac{n}{k} = \frac{3}{2}$ is fixed and $1 \leq n \leq 100$; the bounds computed in Propositions \ref{lower bound} and \ref{Laplace prop}; and the limiting eigenvalue (Theorem \ref{limit theorem intro}). The right plot depicts the signed differences between the numerically computed value of $\lambda_{k,n}$ and the lower- and upper-bounds, for the same range.}
\end{figure}

\begin{figure}[H]
   \begin{minipage}{0.42\textwidth}
     \centering
     \includegraphics[width=\linewidth]{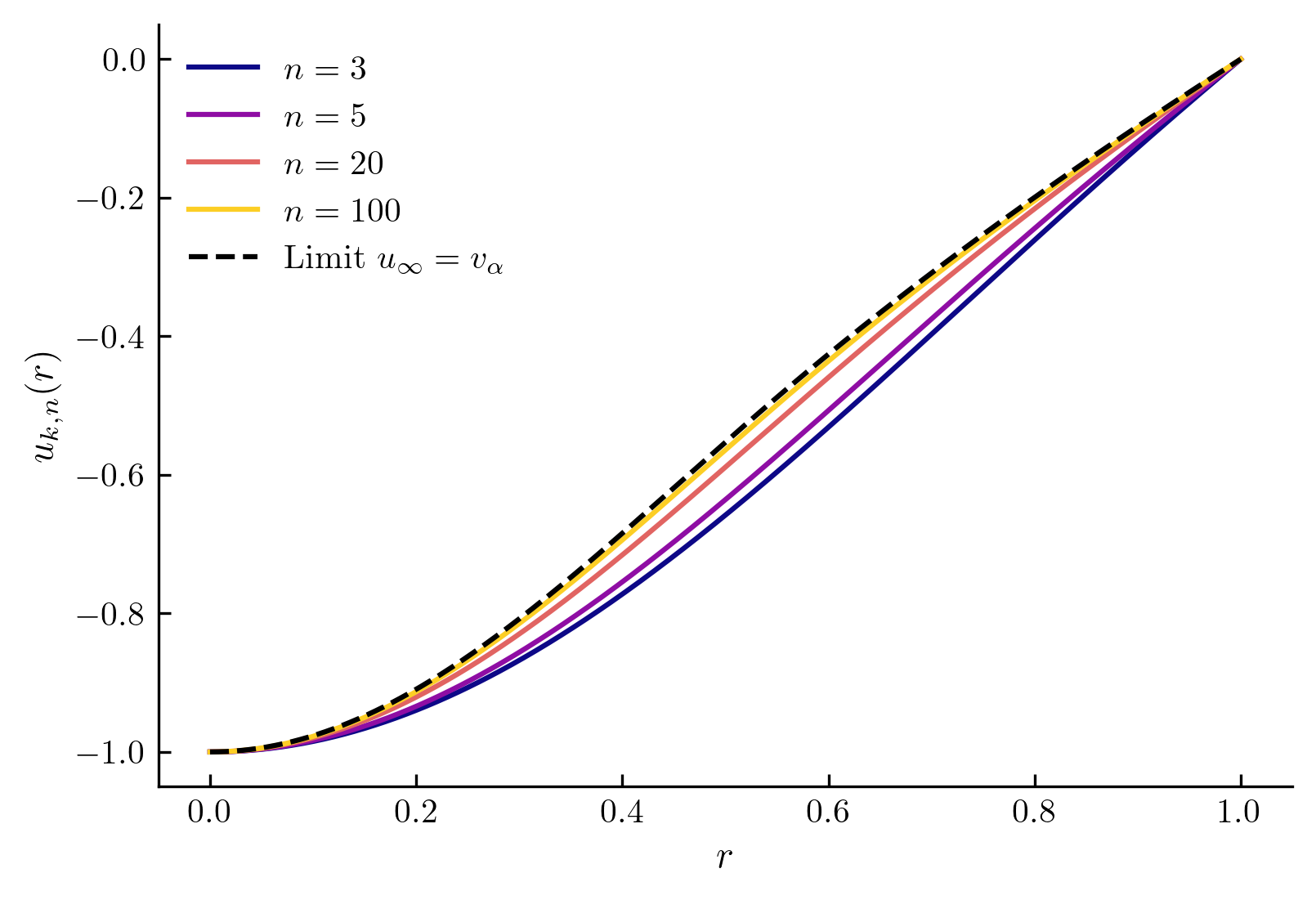}
   \end{minipage}\hfill
   \begin{minipage}{0.42\textwidth}
     \centering
     \includegraphics[width=\linewidth]{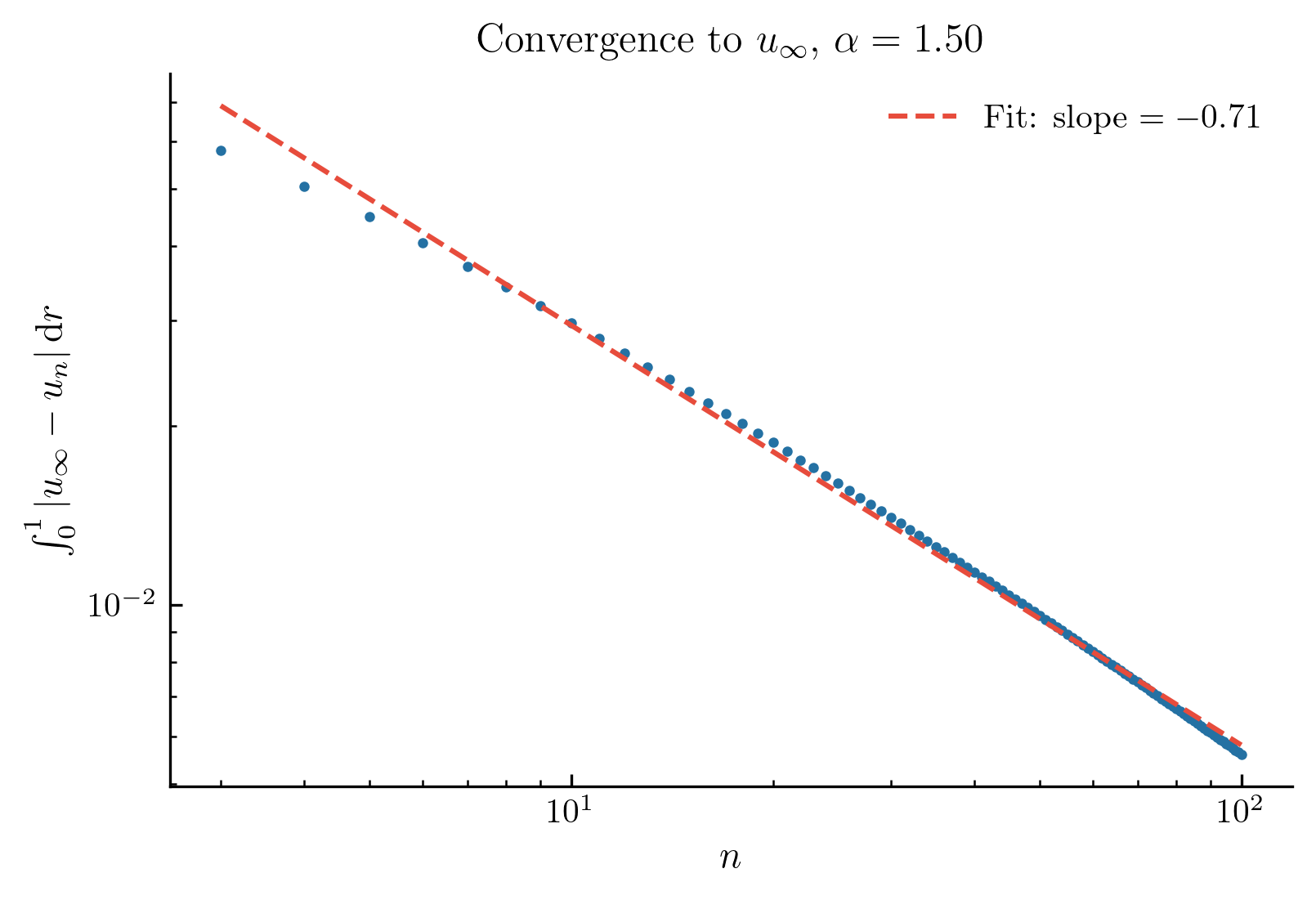}
   \end{minipage}
\caption{The left plot depicts selected numerically computed values of $u_{k,n}$ when $\alpha = \frac{n}{k} = \frac{3}{2}$ is fixed and $1 \leq n \leq 100$, and the limiting function $u_\infty := v_{3/2}$. The right plot depicts the $L^1$ distance between the numerically computed $u_{k,n}$ and $u_\infty$ for the same range, measured on a logarithmic scale, and a line of best fit.}
\end{figure}


\begin{figure}[H]
   \begin{minipage}{0.42\textwidth}
     \centering
     \includegraphics[width=\linewidth]{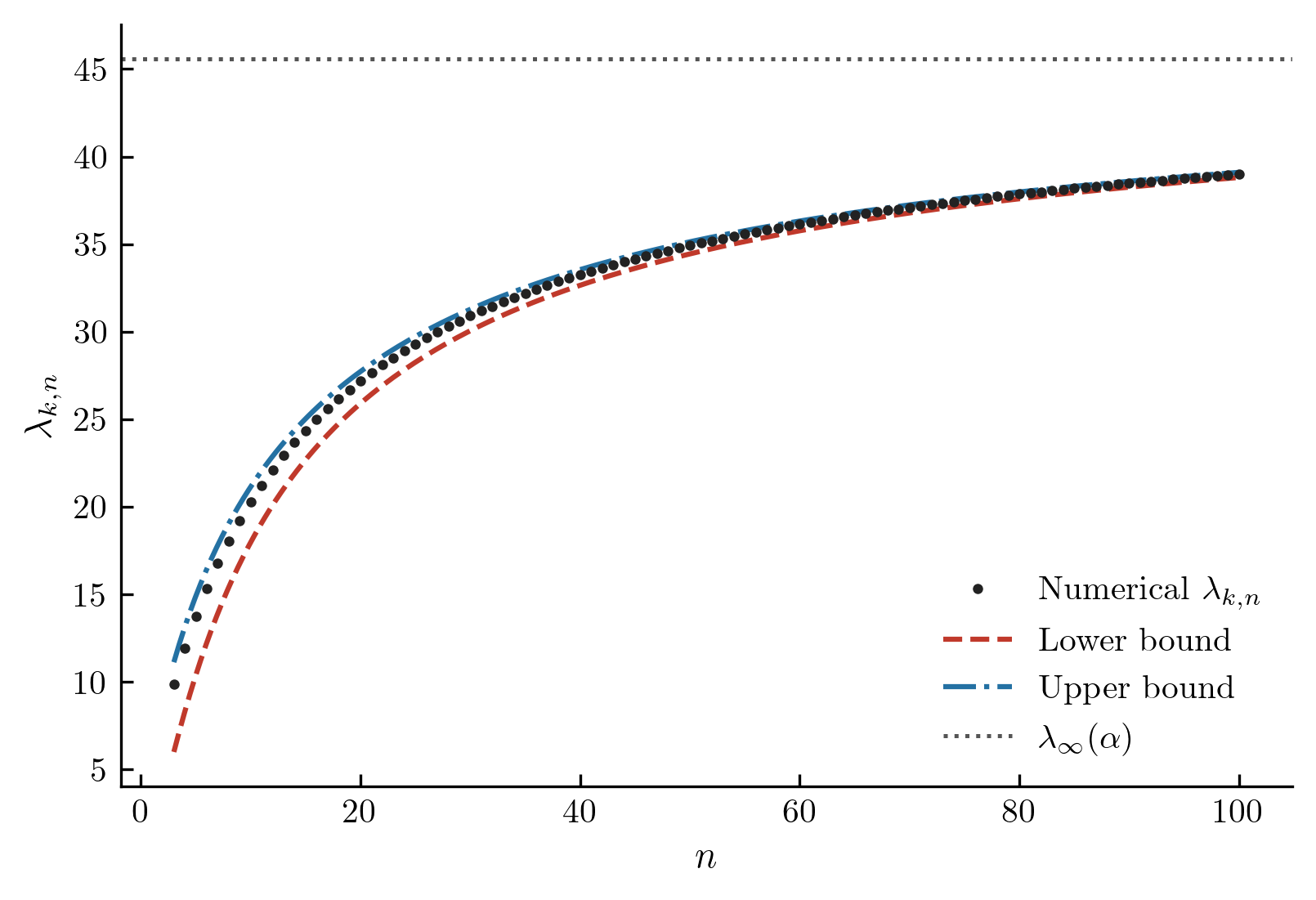}
   \end{minipage}\hfill
   \begin{minipage}{0.42\textwidth}
     \centering
     \includegraphics[width=\linewidth]{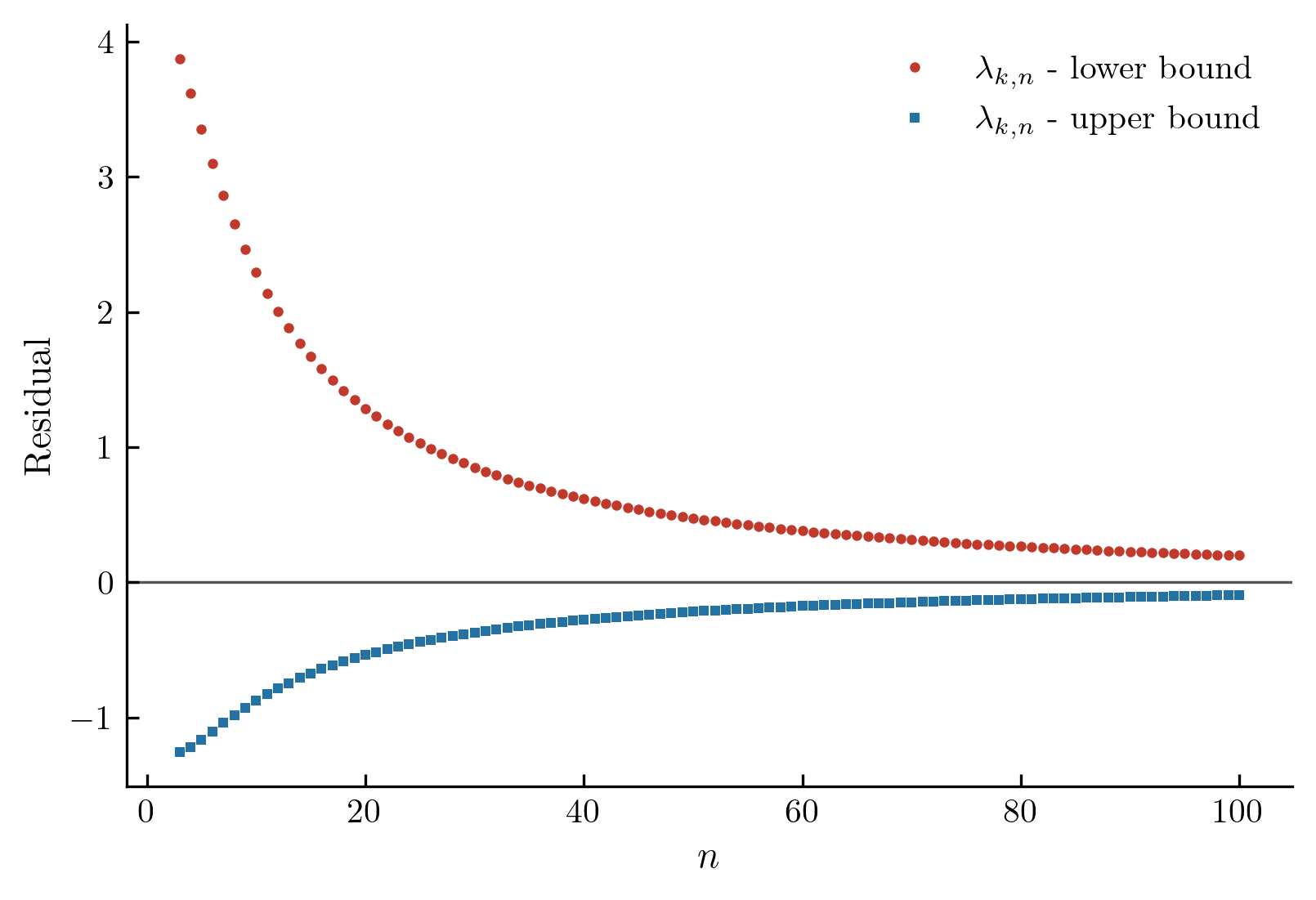}
   \end{minipage}
\caption{The left plot depicts selected numerically computed values of $\lambda_{k,n}$ when $\alpha = \frac{n}{k} = 3$ is fixed and $1 \leq n \leq 100$; the bounds computed in Propositions \ref{lower bound} and \ref{Laplace prop}; and the limiting eigenvalue (Theorem \ref{limit theorem intro}). The right plot depicts the signed differences between the numerically computed value of $\lambda_{k,n}$ and the lower- and upper-bounds, for the same range.}
\end{figure}

\begin{figure}[H]
   \begin{minipage}{0.42\textwidth}
     \centering
     \includegraphics[width=\linewidth]{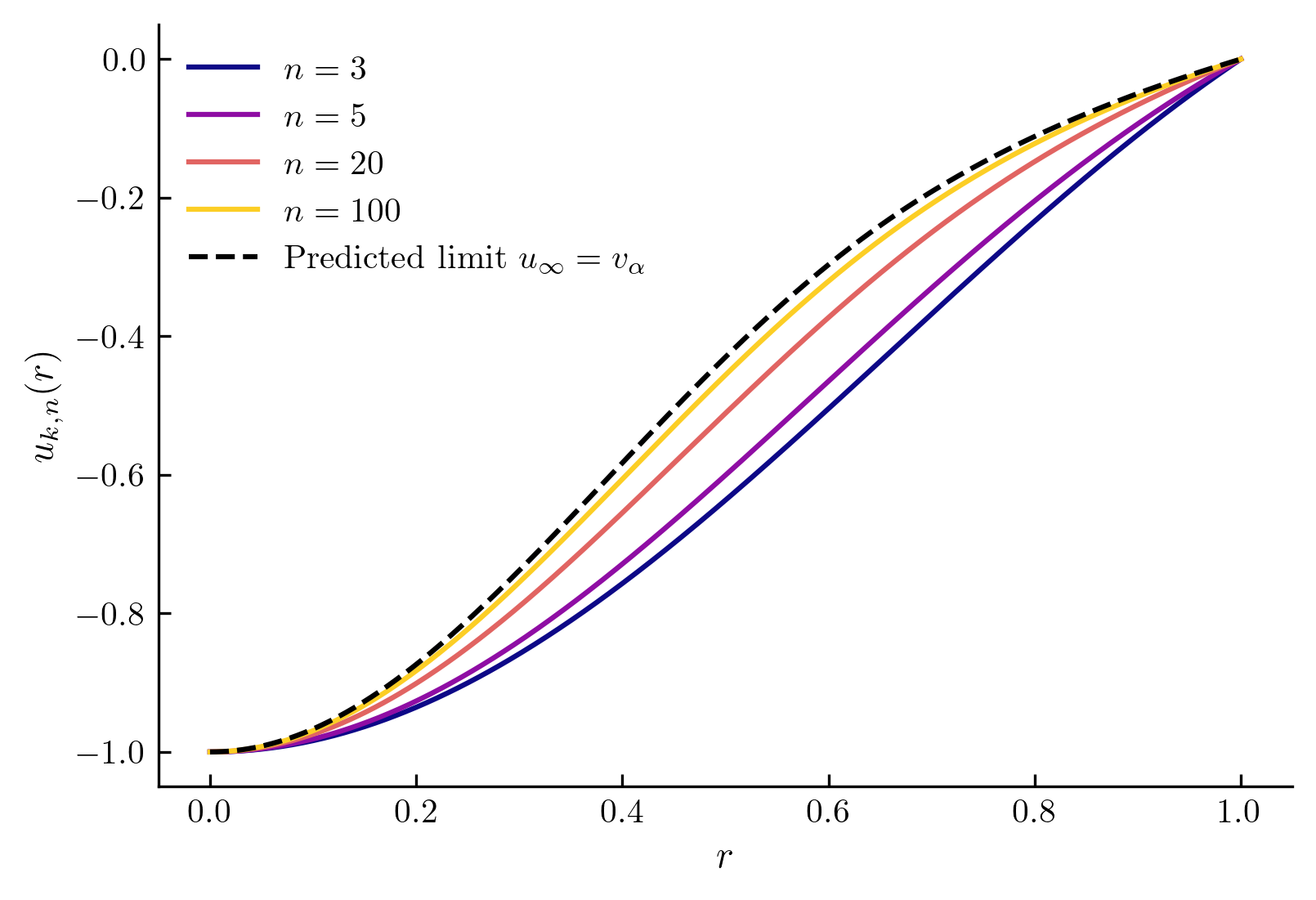}
   \end{minipage}\hfill
   \begin{minipage}{0.42\textwidth}
     \centering
     \includegraphics[width=\linewidth]{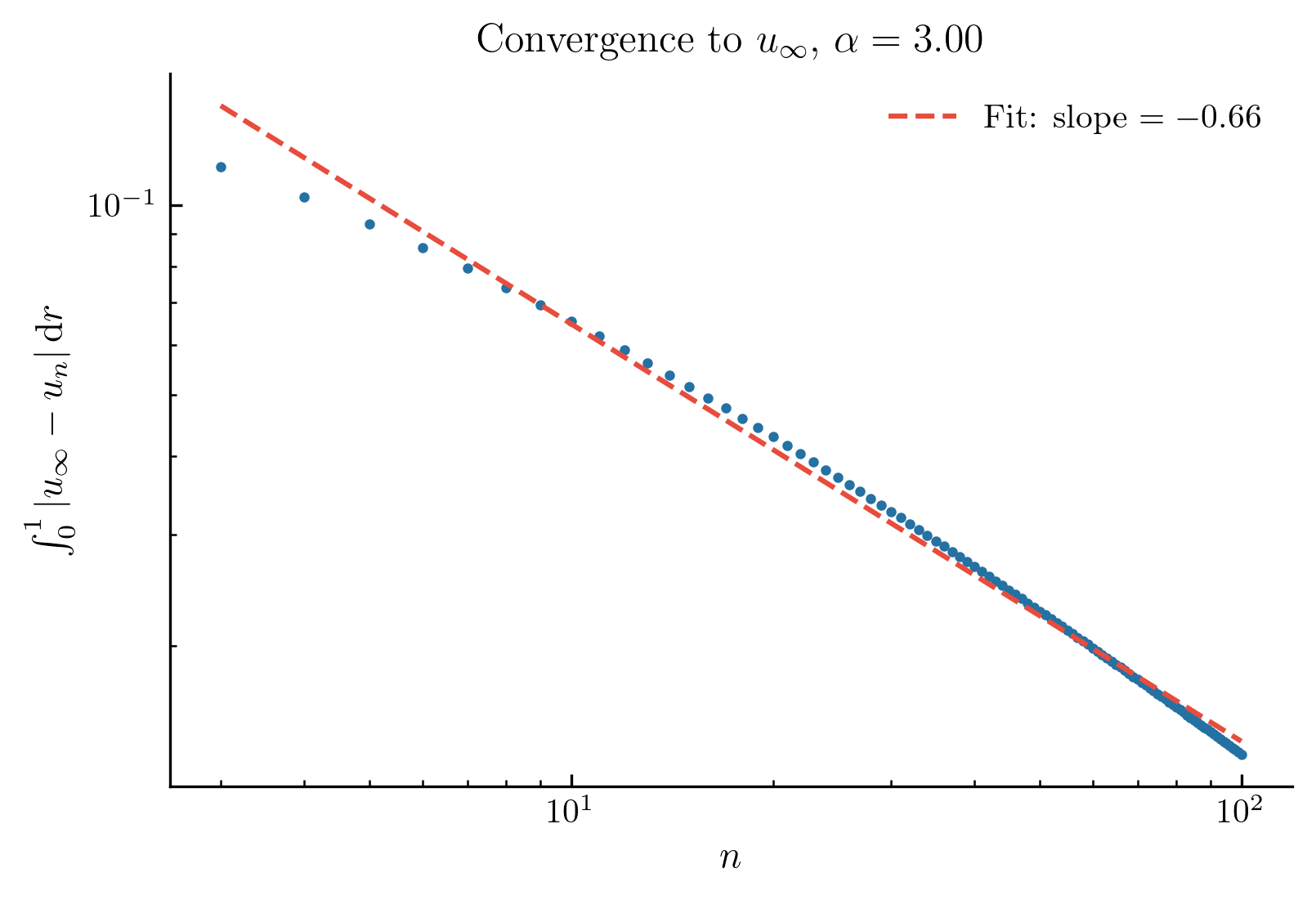}
   \end{minipage}
\caption{The left plot depicts selected numerically computed values of $u_{k,n}$ when $\alpha = \frac{n}{k} = 3$ is fixed and $1 \leq n \leq 100$, and the limiting function $u_\infty := v_{3}$. The right plot depicts the $L^1$ distance between the numerically computed $u_{k,n}$ and $u_\infty$ for the same range, measured on a logarithmic scale, and a line of best fit.}
\end{figure}


\noindent The numerical data suggests the following two statements should be true:
\begin{enumerate}
\item Theorem \ref{eigenfunction theorem intro} should hold for all $\alpha \geq 1$.
\item For each fixed $\alpha$, the eigenfunctions $u_{k, \alpha k}$ should increase to $v_\alpha$.
\end{enumerate}

Statement (2) has some relation to \cite[Question 5.1]{CF25}; there, it is observed that there is a number $R_n > 0$ such that the rescaled eigenfunction $\vp_n(r) := u_{n,n}(R^{-1}_n r)$, $0 \leq r \leq R_n$, satisfies
\[
\vp_{n}'(R_n) = R_n.
\]
Clearly $R_n^2 = u_{n,n}'(1)$; one might then ask if we can determine $u_{n,n}'(1)$ explicitly. Proving (2) would not do so explicitly, but it would show that $R_n$ is decreasing in $n$, with the sharp bounds:
\[
v_1'(1) = 2 e^{-\frac{1}{2}}\  \leq\  R_n^2\  \leq\  \frac{\pi}{2} = u_{1,1}'(1).
\]

Finally, it is additionally interesting to ask if one can determine the curve $k \mapsto \lambda_{k,\alpha k}$ implicitly, perhaps as the solution to another ODE.


\begin{thebibliography}{99}

\bibitem{BZ23a} Badiane, P., Zeriahi, A. {\em The Eigenvalue Problem for the complex Monge-Amp\`ere operator}, J. Geom. Anal. {\bf 33} (2023), no. 12, Paper No. 367, 44 pp.

\bibitem{BZ23b} Badiane, P., Zeriahi, A. {\em A variational approach to the eigenvalue problem for complex Hessian operators}, preprint, arXiv: 2306.04437.

\bibitem{BP21} Birindelli, I., Payne, K. R. {\em Principal eigenvalues for $k$-Hessian operators by maximum principle methods}. Math. Eng. {\bf 3} (2021), no. 3, Paper No. 021, 37 pp.

\bibitem{BNT09} Brandolini, B., Nitsch, C., Trombetti, C. {\em New isoperimetric estimates for solutions to Monge-Amp\`ere equations.} Ann. Inst. H. Poincar\'e Anal. Non Lin\'eaire {\bf 26} (2009), no. 4, 1265--1275.

\bibitem{CLYY25} Chen, L., Lin, Y., Yang J., Yi, W. {\em A Convergent Inexact Abedin-Kitagawa Iteration Method for Monge-Amp\`ere Eigenvalue Problems}. J. Sci. Comput. {\bf 106}, 39 (2026).

\bibitem{CLM26} Chu, J., Liu, Y., McCleerey, N. {\em The eigenvalue problem for the complex Hessian operator on $m$-pseudoconvex manifolds.} J. Funct. Anal. {\bf 290} (2026), no. 3, Paper No. 111258, 58 pp.

\bibitem{CF25} Collins, T., Firester, B. {\em On a general class of free boundary Monge-Amp\`ere equations}, preprint, arXiv: 2508.05551.

\bibitem{GLLQ20} Glowinski, R., Leung, S., Liu, H., Qian, J. {\em On the numerical solution of nonlinear eigenvalue problems for the Monge-Amp\`ere operator}. ESAIM Control Optim. Calc. Var. {\bf 26} (2020), Paper No. 118, 25 pp.

\bibitem{Le25} Le, Nam Q. {\em Large dimension behavior of the Hessian eigenvalues of the unit balls}, arXiv: 2505.09409, to appear in Kodai Math. J.

\bibitem{Le18} Le, N. Q. {\em The eigenvalue problem for the Monge-Amp\`ere operator on general bounded convex domains}, Ann. Sc. Norm. Super. Pisa Cl. Sci. (5) {\bf 18} (2018), no. 4, 1519--1559.

\bibitem{Le26} Le, N. Q. {\em A Variational Approach to Degenerate Monge-Amp\`ere Equations with Mixed Measures and Monotonicity}, preprint, 2026, arXiv: 2603.19114.

\bibitem{LS17} Le, N. Q., Savin, O. {\em Schauder estimates for degenerate Monge-Amp\`ere equations and smoothness of the eigenfunctions.} Invent. Math. {\bf 207} (2017), no. 1, 389--423.

\bibitem{Lio85} Lions, P.-L. {\em Two remarks on Monge-Amp\`ere equations}, Ann. Mat. Pura Appl. (4) {\bf 142} (1985), 263--275.

\bibitem{LLQ22} Liu, H., Leung, S., Qian, J. {\em An Efficient Operator-Splitting Method for the Eigenvalue Problem of the Monge-Amp\`ere Equation}. Commun. Optim. Theory. {\bf 2022} (2022), pp. 1--22.

\bibitem{LZ25} Lu, C. H., Zeriahi, A. {\em A new approach to the Monge-Amp\`ere eigenvalue problem}, preprint, 2025, arXiv: 2507.18409.

\bibitem{OLBC10} Olver, F. W. J., Lozier, D. W., Boisvert, R. F., Clark, C. W. (eds.) {\em NIST Handbook of Mathematical Functions}. Cambridge University Press, New York, 2010.

\bibitem{Tso89} Tso, K. {\em On symmetrization and Hessian equations.} J. Analyse Math. {\bf 52} (1989), 94--106.

\bibitem{Tso90} Tso, K. {\em On a real Monge-Amp\`ere functional}. Invent. Math. {\bf 101}, 425--448 (1990).

\bibitem{W94} Wang, X.-J. {\em A class of fully nonlinear elliptic equations and related functionals}, Indiana Univ. Math. J. {\bf 43} (1994), no. 1, 25--54.

\end{thebibliography}
\end{document}